\def\PsfigVersion{1.10}
\def\setDriver{\DvipsDriver} % \DvipsDriver or \OzTeXDriver
\let\LaTeXAtSign=\@
\let\@=\relax
\edef\psfigRestoreAt{\catcode`\@=\number\catcode`@\relax}
\newwrite\@unused
\def\ps@typeout#1{{\let\protect\string\immediate\write\@unused{#1}}}
\def\DvipsDriver{
	\ps@typeout{psfig/tex \PsfigVersion -dvips}
\def\PsfigSpecials{\DvipsSpecials} 	\def\ps@dir{/}
\def\ps@predir{} }
\def\OzTeXDriver{
	\ps@typeout{psfig/tex \PsfigVersion -oztex}
	\def\PsfigSpecials{\OzTeXSpecials}
	\def\ps@dir{:}
	\def\ps@predir{:}
	\catcode`\^^J=5
}
\def\figurepath{./:}
\def\DoPaths#1{\expandafter\EachPath#1\stoplist}
\def\leer{}
\def\EachPath#1:#2\stoplist{% #1 part of the list (delimiter :)
  \ExistsFile{#1}{\SearchedFile}
  \ifx#2\leer
  \else
    \expandafter\EachPath#2\stoplist
  \fi}
\def\ps@dir{/}
\def\ExistsFile#1#2{%
   \openin1=\ps@predir#1\ps@dir#2
   \ifeof1
       \closein1
       %\ps@typeout{...not: \ps@predir#1\ps@dir#2}
   \else
       \closein1
       %\ps@typeout{...in:  \ps@predir#1\ps@dir#2}
        \ifx\ps@founddir\leer
          %\ps@typeout{set founddir #1}
           \edef\ps@founddir{#1}
        \fi
   \fi}
\def\get@dir#1{%
  \def\ps@founddir{}
  \def\SearchedFile{#1}
  \DoPaths\figurepath
%  \fi
}
\def\@nnil{\@nil}
\def\@empty{}
\def\@psdonoop#1\@@#2#3{}
\def\@psdo#1:=#2\do#3{\edef\@psdotmp{#2}\ifx\@psdotmp\@empty \else
    \expandafter\@psdoloop#2,\@nil,\@nil\@@#1{#3}\fi}
\def\@psdoloop#1,#2,#3\@@#4#5{\def#4{#1}\ifx #4\@nnil \else
       #5\def#4{#2}\ifx #4\@nnil \else#5\@ipsdoloop #3\@@#4{#5}\fi\fi}
\def\@ipsdoloop#1,#2\@@#3#4{\def#3{#1}\ifx #3\@nnil 
       \let\@nextwhile=\@psdonoop \else
      #4\relax\let\@nextwhile=\@ipsdoloop\fi\@nextwhile#2\@@#3{#4}}
\def\@tpsdo#1:=#2\do#3{\xdef\@psdotmp{#2}\ifx\@psdotmp\@empty \else
    \@tpsdoloop#2\@nil\@nil\@@#1{#3}\fi}
\def\@tpsdoloop#1#2\@@#3#4{\def#3{#1}\ifx #3\@nnil 
       \let\@nextwhile=\@psdonoop \else
      #4\relax\let\@nextwhile=\@tpsdoloop\fi\@nextwhile#2\@@#3{#4}}
\newdimen\fboxrule
\newdimen\fboxsep
\newdimen\ps@tempdima
\newbox\ps@tempboxa
\long\def\fbox#1{\leavevmode\setbox\ps@tempboxa\hbox{#1}\ps@tempdima\fboxrule
    \advance\ps@tempdima \fboxsep \advance\ps@tempdima \dp\ps@tempboxa
   \hbox{\lower \ps@tempdima\hbox
  {\vbox{\hrule height \fboxrule
          \hbox{\vrule width \fboxrule \hskip\fboxsep
          \vbox{\vskip\fboxsep \box\ps@tempboxa\vskip\fboxsep}\hskip 
                 \fboxsep\vrule width \fboxrule}
                 \hrule height \fboxrule}}}}
\newread\ps@stream
\newif\ifnot@eof       % continue looking for the bounding box?
\newif\if@noisy        % report what you're making?
\newif\if@atend        % %%BoundingBox: has (at end) specification
\newif\if@psfile       % does this look like a PostScript file?
\global\gdef\epsf@start{%!}}
\def\epsf@PS{PS}
\def\epsf@getbb#1{%
%
%   The first thing we need to do is to open the
%   PostScript file, if possible.
%
\openin\ps@stream=\ps@predir#1
\ifeof\ps@stream\ps@typeout{Error, File #1 not found}\else
%
%   Okay, we got it. Now we'll scan lines until we find one that doesn't
%   start with %. We're looking for the bounding box comment.
%
   {\not@eoftrue \chardef\other=12
    \def\do##1{\catcode`##1=\other}\dospecials \catcode`\ =10
    \loop
       \if@psfile
	  \read\ps@stream to \epsf@fileline
       \else{
	  \obeyspaces
          \read\ps@stream to \epsf@tmp\global\let\epsf@fileline\epsf@tmp}
       \fi
       \ifeof\ps@stream\not@eoffalse\else
%
%   Check the first line for `%!'.  Issue a warning message if its not
%   there, since the file might not be a PostScript file.
%
       \if@psfile\else
       \expandafter\epsf@test\epsf@fileline:. \\%
       \fi
%
%   We check to see if the first character is a % sign;
%   if so, we look further and stop only if the line begins with
%   `%%BoundingBox:' and the `(atend)' specification was not found.
%   That is, the only way to stop is when the end of file is reached,
%   or a `%%BoundingBox: llx lly urx ury' line is found.
%
          \expandafter\epsf@aux\epsf@fileline:. \\%
       \fi
   \ifnot@eof\repeat
   }\closein\ps@stream\fi}%
%
% This tests if the file we are reading looks like a PostScript file.
%
\long\def\epsf@test#1#2#3:#4\\{\def\epsf@testit{#1#2}
			\ifx\epsf@testit\epsf@start\else
\ps@typeout{Warning! File does not start with `\epsf@start'.  It may not be a PostScript file.}
			\fi
			\@psfiletrue} % don't test after 1st line
%
%   We still need to define the tricky \epsf@aux macro. This requires
%   a couple of magic constants for comparison purposes.
%
{\catcode`\%=12\global\let\epsf@percent=%\global\def\epsf@bblit{%BoundingBox}}
%
%
%   So we're ready to check for `%BoundingBox:' and to grab the
%   values if they are found.  We continue searching if `(at end)'
%   was found after the `%BoundingBox:'.
%
\long\def\epsf@aux#1#2:#3\\{\ifx#1\epsf@percent
   \def\epsf@testit{#2}\ifx\epsf@testit\epsf@bblit
	\@atendfalse
        \epsf@atend #3 . \\%
	\if@atend	
	   \if@verbose{
		\ps@typeout{psfig: found `(atend)'; continuing search}
	   }\fi
        \else
        \epsf@grab #3 . . . \\%
        \not@eoffalse
        \global\no@bbfalse
        \fi
   \fi\fi}%
%
%   Here we grab the values and stuff them in the appropriate definitions.
%
\def\epsf@grab #1 #2 #3 #4 #5\\{%
   \global\def\epsf@llx{#1}\ifx\epsf@llx\empty
      \epsf@grab #2 #3 #4 #5 .\\\else
   \global\def\epsf@lly{#2}%
   \global\def\epsf@urx{#3}\global\def\epsf@ury{#4}\fi}%
%
% Determine if the stuff following the %%BoundingBox is `(atend)'
% J. Daniel Smith.  Copied from \epsf@grab above.
%
\def\epsf@atendlit{(atend)} 
\def\epsf@atend #1 #2 #3\\{%
   \def\epsf@tmp{#1}\ifx\epsf@tmp\empty
      \epsf@atend #2 #3 .\\\else
   \ifx\epsf@tmp\epsf@atendlit\@atendtrue\fi\fi}

% End of file reading stuff from epsf.tex
%%%%%%%%%%%%%%%%%%%%%%%%%%%%%%%%%%%%%%%%%%%%%%%%%%%%%%%%%%%%%%%%%%%

%%%%%%%%%%%%%%%%%%%%%%%%%%%%%%%%%%%%%%%%%%%%%%%%%%%%%%%%%%%%%%%%%%%
% trigonometry stuff from "trig.tex"
\chardef\psletter = 11 % won't conflict with \begin{letter} now...
\chardef\other = 12

\newif \ifdebug %%% turn me on to see TeX hard at work ...
\newif\ifc@mpute %%% don't need to compute some values
\c@mputetrue % but assume that we do

\let\then = \relax
\def\r@dian{pt }
\let\r@dians = \r@dian
\let\dimensionless@nit = \r@dian
\let\dimensionless@nits = \dimensionless@nit
\def\internal@nit{sp }
\let\internal@nits = \internal@nit
\newif\ifstillc@nverging
\def \Mess@ge #1{\ifdebug \then \message {#1} \fi}

{ %%% Things that need abnormal catcodes %%%
	\catcode `\@ = \psletter
	\gdef \nodimen {\expandafter \n@dimen \the \dimen}
	\gdef \term #1 #2 #3%
	       {\edef \t@ {\the #1}%%% freeze parameter 1 (count, by value)
		\edef \t@@ {\expandafter \n@dimen \the #2\r@dian}%
				   %%% freeze parameter 2 (dimen, by value)
		\t@rm {\t@} {\t@@} {#3}%
	       }
	\gdef \t@rm #1 #2 #3%
	       {{%
		\count 0 = 0
		\dimen 0 = 1 \dimensionless@nit
		\dimen 2 = #2\relax
		\Mess@ge {Calculating term #1 of \nodimen 2}%
		\loop
		\ifnum	\count 0 < #1
		\then	\advance \count 0 by 1
			\Mess@ge {Iteration \the \count 0 \space}%
			\Multiply \dimen 0 by {\dimen 2}%
			\Mess@ge {After multiplication, term = \nodimen 0}%
			\Divide \dimen 0 by {\count 0}%
			\Mess@ge {After division, term = \nodimen 0}%
		\repeat
		\Mess@ge {Final value for term #1 of 
				\nodimen 2 \space is \nodimen 0}%
		\xdef \Term {#3 = \nodimen 0 \r@dians}%
		\aftergroup \Term
	       }}
	\catcode `\p = \other
	\catcode `\t = \other
	\gdef \n@dimen #1pt{#1} %%% throw away the ``pt''
}

\def \Divide #1by #2{\divide #1 by #2} %%% just a synonym

\def \Multiply #1by #2%%% allows division of a dimen by a dimen
       {{%%% should really freeze parameter 2 (dimen, passed by value)
	\count 0 = #1\relax
	\count 2 = #2\relax
	\count 4 = 65536
	\Mess@ge {Before scaling, count 0 = \the \count 0 \space and
			count 2 = \the \count 2}%
	\ifnum	\count 0 > 32767 %%% do our best to avoid overflow
	\then	\divide \count 0 by 4
		\divide \count 4 by 4
	\else	\ifnum	\count 0 < -32767
		\then	\divide \count 0 by 4
			\divide \count 4 by 4
		\else
		\fi
	\fi
	\ifnum	\count 2 > 32767 %%% while retaining reasonable accuracy
	\then	\divide \count 2 by 4
		\divide \count 4 by 4
	\else	\ifnum	\count 2 < -32767
		\then	\divide \count 2 by 4
			\divide \count 4 by 4
		\else
		\fi
	\fi
	\multiply \count 0 by \count 2
	\divide \count 0 by \count 4
	\xdef \product {#1 = \the \count 0 \internal@nits}%
	\aftergroup \product
       }}

\def\r@duce{\ifdim\dimen0 > 90\r@dian \then   % sin(x+90) = sin(180-x)
		\multiply\dimen0 by -1
		\advance\dimen0 by 180\r@dian
		\r@duce
	    \else \ifdim\dimen0 < -90\r@dian \then  % sin(-x) = sin(360+x)
		\advance\dimen0 by 360\r@dian
		\r@duce
		\fi
	    \fi}

\def\Sine#1%
       {{%
	\dimen 0 = #1 \r@dian
	\r@duce
	\ifdim\dimen0 = -90\r@dian \then
	   \dimen4 = -1\r@dian
	   \c@mputefalse
	\fi
	\ifdim\dimen0 = 90\r@dian \then
	   \dimen4 = 1\r@dian
	   \c@mputefalse
	\fi
	\ifdim\dimen0 = 0\r@dian \then
	   \dimen4 = 0\r@dian
	   \c@mputefalse
	\fi
	\ifc@mpute \then
        	% convert degrees to radians
		\divide\dimen0 by 180
		\dimen0=3.141592654\dimen0
		\dimen 2 = 3.1415926535897963\r@dian %%% a well-known constant
		\divide\dimen 2 by 2 %%% we only deal with -pi/2 : pi/2
		\Mess@ge {Sin: calculating Sin of \nodimen 0}%
		\count 0 = 1 %%% see power-series expansion for sine
		\dimen 2 = 1 \r@dian %%% ditto
		\dimen 4 = 0 \r@dian %%% ditto
		\loop
			\ifnum	\dimen 2 = 0 %%% then we've done
			\then	\stillc@nvergingfalse 
			\else	\stillc@nvergingtrue
			\fi
			\ifstillc@nverging %%% then calculate next term
			\then	\term {\count 0} {\dimen 0} {\dimen 2}%
				\advance \count 0 by 2
				\count 2 = \count 0
				\divide \count 2 by 2
				\ifodd	\count 2 %%% signs alternate
				\then	\advance \dimen 4 by \dimen 2
				\else	\advance \dimen 4 by -\dimen 2
				\fi
		\repeat
	\fi		
			\xdef \sine {\nodimen 4}%
       }}

% Now the Cosine can be calculated easily by calling \Sine
\def\Cosine#1{\ifx\sine\UnDefined\edef\Savesine{\relax}\else
		             \edef\Savesine{\sine}\fi
	{\dimen0=#1\r@dian\advance\dimen0 by 90\r@dian
	 \Sine{\nodimen 0}
	 \xdef\cosine{\sine}
	 \xdef\sine{\Savesine}}}	      
% end of trig stuff
%%%%%%%%%%%%%%%%%%%%%%%%%%%%%%%%%%%%%%%%%%%%%%%%%%%%%%%%%%%%%%%%%%%%

\def\psdraft{
	\def\@psdraft{0}
	%\ps@typeout{draft level now is \@psdraft \space . }
}
\def\psfull{
	\def\@psdraft{100}
	%\ps@typeout{draft level now is \@psdraft \space . }
}

\psfull

\newif\if@scalefirst
\def\psscalefirst{\@scalefirsttrue}
\def\psrotatefirst{\@scalefirstfalse}
\psrotatefirst

\newif\if@draftbox
\def\psnodraftbox{
	\@draftboxfalse
}
\def\psdraftbox{
	\@draftboxtrue
}
\@draftboxtrue

\newif\if@prologfile
\newif\if@postlogfile
\def\pssilent{
	\@noisyfalse
}
\def\psnoisy{
	\@noisytrue
}
\psnoisy
%%% These are for the option list.
%%% A specification of the form a = b maps to calling \@p@@sa{b}
\newif\if@bbllx
\newif\if@bblly
\newif\if@bburx
\newif\if@bbury
\newif\if@height
\newif\if@width
\newif\if@rheight
\newif\if@rwidth
\newif\if@angle
\newif\if@clip
\newif\if@verbose
\def\@p@@sclip#1{\@cliptrue}
\newif\if@decmpr
\def\@p@@sfigure#1{\def\@p@sfile{null}\def\@p@sbbfile{null}\@decmprfalse
   % look directly for file (e.g. absolute path)
   \openin1=\ps@predir#1
   \ifeof1
	\closein1
	% failed, search directories for file
	\get@dir{#1}
	\ifx\ps@founddir\leer
		% failed, search directly for file.bb
		\openin1=\ps@predir#1.bb
		\ifeof1
			\closein1
			% failed, search directories for file.bb
			\get@dir{#1.bb}
			\ifx\ps@founddir\leer
				% failed, lose.
				\ps@typeout{Can't find #1 in \figurepath}
			\else
				% found file.bb in search dir
				\@decmprtrue
				\def\@p@sfile{\ps@founddir\ps@dir#1}
				\def\@p@sbbfile{\ps@founddir\ps@dir#1.bb}
			\fi
		\else
			\closein1
			%found file.bb directly
			\@decmprtrue
			\def\@p@sfile{#1}
			\def\@p@sbbfile{#1.bb}
		\fi
	\else
		% found file in search dir
		\def\@p@sfile{\ps@founddir\ps@dir#1}
		\def\@p@sbbfile{\ps@founddir\ps@dir#1}
	\fi
   \else
	% found file directly
	\closein1
	\def\@p@sfile{#1}
	\def\@p@sbbfile{#1}
   \fi
}
\def\@p@@sfile#1{\@p@@sfigure{#1}}
\def\@p@@sbbllx#1{
		%\ps@typeout{bbllx is #1}
		\@bbllxtrue
		\dimen100=#1
		\edef\@p@sbbllx{\number\dimen100}
}
\def\@p@@sbblly#1{
		%\ps@typeout{bblly is #1}
		\@bbllytrue
		\dimen100=#1
		\edef\@p@sbblly{\number\dimen100}
}
\def\@p@@sbburx#1{
		%\ps@typeout{bburx is #1}
		\@bburxtrue
		\dimen100=#1
		\edef\@p@sbburx{\number\dimen100}
}
\def\@p@@sbbury#1{
		%\ps@typeout{bbury is #1}
		\@bburytrue
		\dimen100=#1
		\edef\@p@sbbury{\number\dimen100}
}
\def\@p@@sheight#1{
		\@heighttrue
		\dimen100=#1
   		\edef\@p@sheight{\number\dimen100}
		%\ps@typeout{Height is \@p@sheight}
}
\def\@p@@swidth#1{
		%\ps@typeout{Width is #1}
		\@widthtrue
		\dimen100=#1
		\edef\@p@swidth{\number\dimen100}
}
\def\@p@@srheight#1{
		%\ps@typeout{Reserved height is #1}
		\@rheighttrue
		\dimen100=#1
		\edef\@p@srheight{\number\dimen100}
}
\def\@p@@srwidth#1{
		%\ps@typeout{Reserved width is #1}
		\@rwidthtrue
		\dimen100=#1
		\edef\@p@srwidth{\number\dimen100}
}
\def\@p@@sangle#1{
		%\ps@typeout{Rotation is #1}
		\@angletrue
%		\dimen100=#1
		\edef\@p@sangle{#1} %\number\dimen100}
}
\def\@p@@ssilent#1{ 
		\@verbosefalse
}
\def\@p@@sprolog#1{\@prologfiletrue\def\@prologfileval{#1}}
\def\@p@@spostlog#1{\@postlogfiletrue\def\@postlogfileval{#1}}
\def\@cs@name#1{\csname #1\endcsname}
\def\@setparms#1=#2,{\@cs@name{@p@@s#1}{#2}}
%
% initialize the defaults (size the size of the figure)
%
\def\ps@init@parms{
		\@bbllxfalse \@bbllyfalse
		\@bburxfalse \@bburyfalse
		\@heightfalse \@widthfalse
		\@rheightfalse \@rwidthfalse
		\def\@p@sbbllx{}\def\@p@sbblly{}
		\def\@p@sbburx{}\def\@p@sbbury{}
		\def\@p@sheight{}\def\@p@swidth{}
		\def\@p@srheight{}\def\@p@srwidth{}
		\def\@p@sangle{0}
		\def\@p@sfile{} \def\@p@sbbfile{}
		\def\@p@scost{10}
		\def\@sc{}
		\@prologfilefalse
		\@postlogfilefalse
		\@clipfalse
		\if@noisy
			\@verbosetrue
		\else
			\@verbosefalse
		\fi
}
%
% Go through the options setting things up.
%
\def\parse@ps@parms#1{
	 	\@psdo\@psfiga:=#1\do
		   {\expandafter\@setparms\@psfiga,}}
%
% Compute bb height and width
%
\newif\ifno@bb
\def\bb@missing{
	\if@verbose{
		\ps@typeout{psfig: searching \@p@sbbfile \space  for bounding box}
	}\fi
	\no@bbtrue
	\epsf@getbb{\@p@sbbfile}
        \ifno@bb \else \bb@cull\epsf@llx\epsf@lly\epsf@urx\epsf@ury\fi
}	
\def\bb@cull#1#2#3#4{
	\dimen100=#1 bp\edef\@p@sbbllx{\number\dimen100}
	\dimen100=#2 bp\edef\@p@sbblly{\number\dimen100}
	\dimen100=#3 bp\edef\@p@sbburx{\number\dimen100}
	\dimen100=#4 bp\edef\@p@sbbury{\number\dimen100}
	\no@bbfalse
}
% rotate point (#1,#2) about (0,0).
% The sine and cosine of the angle are already stored in \sine and
% \cosine.  The result is placed in (\p@intvaluex, \p@intvaluey).
\newdimen\p@intvaluex
\newdimen\p@intvaluey
\def\rotate@#1#2{{\dimen0=#1 sp\dimen1=#2 sp
%            	calculate x' = x \cos\theta - y \sin\theta
		  \global\p@intvaluex=\cosine\dimen0
		  \dimen3=\sine\dimen1
		  \global\advance\p@intvaluex by -\dimen3
% 		calculate y' = x \sin\theta + y \cos\theta
		  \global\p@intvaluey=\sine\dimen0
		  \dimen3=\cosine\dimen1
		  \global\advance\p@intvaluey by \dimen3
		  }}
\def\compute@bb{
		\no@bbfalse
		\if@bbllx \else \no@bbtrue \fi
		\if@bblly \else \no@bbtrue \fi
		\if@bburx \else \no@bbtrue \fi
		\if@bbury \else \no@bbtrue \fi
		\ifno@bb \bb@missing \fi
		\ifno@bb \ps@typeout{FATAL ERROR: no bb supplied or found}
			\no-bb-error
		\fi
		%
%\ps@typeout{BB: \@p@sbbllx, \@p@sbblly, \@p@sbburx, \@p@sbbury} 
%
% store height/width of original (unrotated) bounding box
		\count203=\@p@sbburx
		\count204=\@p@sbbury
		\advance\count203 by -\@p@sbbllx
		\advance\count204 by -\@p@sbblly
		\edef\ps@bbw{\number\count203}
		\edef\ps@bbh{\number\count204}
		%\ps@typeout{ psbbh = \ps@bbh, psbbw = \ps@bbw }
		\if@angle 
			\Sine{\@p@sangle}\Cosine{\@p@sangle}
	        	{\dimen100=\maxdimen\xdef\r@p@sbbllx{\number\dimen100}
					    \xdef\r@p@sbblly{\number\dimen100}
			                    \xdef\r@p@sbburx{-\number\dimen100}
					    \xdef\r@p@sbbury{-\number\dimen100}}
%
% Need to rotate all four points and take the X-Y extremes of the new
% points as the new bounding box.
                        \def\minmaxtest{
			   \ifnum\number\p@intvaluex<\r@p@sbbllx
			      \xdef\r@p@sbbllx{\number\p@intvaluex}\fi
			   \ifnum\number\p@intvaluex>\r@p@sbburx
			      \xdef\r@p@sbburx{\number\p@intvaluex}\fi
			   \ifnum\number\p@intvaluey<\r@p@sbblly
			      \xdef\r@p@sbblly{\number\p@intvaluey}\fi
			   \ifnum\number\p@intvaluey>\r@p@sbbury
			      \xdef\r@p@sbbury{\number\p@intvaluey}\fi
			   }
%			lower left
			\rotate@{\@p@sbbllx}{\@p@sbblly}
			\minmaxtest
%			upper left
			\rotate@{\@p@sbbllx}{\@p@sbbury}
			\minmaxtest
%			lower right
			\rotate@{\@p@sbburx}{\@p@sbblly}
			\minmaxtest
%			upper right
			\rotate@{\@p@sbburx}{\@p@sbbury}
			\minmaxtest
			\edef\@p@sbbllx{\r@p@sbbllx}\edef\@p@sbblly{\r@p@sbblly}
			\edef\@p@sbburx{\r@p@sbburx}\edef\@p@sbbury{\r@p@sbbury}
%\ps@typeout{rotated BB: \r@p@sbbllx, \r@p@sbblly, \r@p@sbburx, \r@p@sbbury}
		\fi
		\count203=\@p@sbburx
		\count204=\@p@sbbury
		\advance\count203 by -\@p@sbbllx
		\advance\count204 by -\@p@sbblly
		\edef\@bbw{\number\count203}
		\edef\@bbh{\number\count204}
		%\ps@typeout{ bbh = \@bbh, bbw = \@bbw }
}
%
% \in@hundreds performs #1 * (#2 / #3) correct to the hundreds,
%	then leaves the result in @result
%
\def\in@hundreds#1#2#3{\count240=#2 \count241=#3
		     \count100=\count240	% 100 is first digit #2/#3
		     \divide\count100 by \count241
		     \count101=\count100
		     \multiply\count101 by \count241
		     \advance\count240 by -\count101
		     \multiply\count240 by 10
		     \count101=\count240	%101 is second digit of #2/#3
		     \divide\count101 by \count241
		     \count102=\count101
		     \multiply\count102 by \count241
		     \advance\count240 by -\count102
		     \multiply\count240 by 10
		     \count102=\count240	% 102 is the third digit
		     \divide\count102 by \count241
		     \count200=#1\count205=0
		     \count201=\count200
			\multiply\count201 by \count100
		 	\advance\count205 by \count201
		     \count201=\count200
			\divide\count201 by 10
			\multiply\count201 by \count101
			\advance\count205 by \count201
		     \count201=\count200
			\divide\count201 by 100
			\multiply\count201 by \count102
			\advance\count205 by \count201
		     \edef\@result{\number\count205}
}
\def\compute@wfromh{
		% computing : width = height * (bbw / bbh)
		\in@hundreds{\@p@sheight}{\@bbw}{\@bbh}
		%\ps@typeout{ \@p@sheight * \@bbw / \@bbh, = \@result }
		\edef\@p@swidth{\@result}
		%\ps@typeout{w from h: width is \@p@swidth}
}
\def\compute@hfromw{
		% computing : height = width * (bbh / bbw)
	        \in@hundreds{\@p@swidth}{\@bbh}{\@bbw}
		%\ps@typeout{ \@p@swidth * \@bbh / \@bbw = \@result }
		\edef\@p@sheight{\@result}
		%\ps@typeout{h from w : height is \@p@sheight}
}
\def\compute@handw{
		\if@height 
			\if@width
			\else
				\compute@wfromh
			\fi
		\else 
			\if@width
				\compute@hfromw
			\else
				\edef\@p@sheight{\@bbh}
				\edef\@p@swidth{\@bbw}
			\fi
		\fi
}
\def\compute@resv{
		\if@rheight \else \edef\@p@srheight{\@p@sheight} \fi
		\if@rwidth \else \edef\@p@srwidth{\@p@swidth} \fi
		%\ps@typeout{rheight = \@p@srheight, rwidth = \@p@srwidth}
}
%		
% Compute any missing values
\def\compute@sizes{
	\compute@bb
	\if@scalefirst\if@angle
% at this point the bounding box has been adjsuted correctly for
% rotation.  PSFIG does all of its scaling using \@bbh and \@bbw.  If
% a width= or height= was specified along with \psscalefirst, then the
% width=/height= value needs to be adjusted to match the new (rotated)
% bounding box size (specifed in \@bbw and \@bbh).
%    \ps@bbw       width=
%    -------  =  ---------- 
%    \@bbw       new width=
% so `new width=' = (width= * \@bbw) / \ps@bbw; where \ps@bbw is the
% width of the original (unrotated) bounding box.
	\if@width
	   \in@hundreds{\@p@swidth}{\@bbw}{\ps@bbw}
	   \edef\@p@swidth{\@result}
	\fi
	\if@height
	   \in@hundreds{\@p@sheight}{\@bbh}{\ps@bbh}
	   \edef\@p@sheight{\@result}
	\fi
	\fi\fi
	\compute@handw
	\compute@resv}
\def\OzTeXSpecials{
	\special{empty.ps /@isp {true} def}
	\special{empty.ps \@p@swidth \space \@p@sheight \space
			\@p@sbbllx \space \@p@sbblly \space
			\@p@sbburx \space \@p@sbbury \space
			startTexFig \space }
	\if@clip{
		\if@verbose{
			\ps@typeout{(clip)}
		}\fi
		\special{empty.ps doclip \space }
	}\fi
	\if@angle{
		\if@verbose{
			\ps@typeout{(rotate)}
		}\fi
		\special {empty.ps \@p@sangle \space rotate \space} 
	}\fi
	\if@prologfile
	    \special{\@prologfileval \space } \fi
	\if@decmpr{
		\if@verbose{
			\ps@typeout{psfig: Compression not available
			in OzTeX version \space }
		}\fi
	}\else{
		\if@verbose{
			\ps@typeout{psfig: including \@p@sfile \space }
		}\fi
		\special{epsf=\@p@sfile \space }
	}\fi
	\if@postlogfile
	    \special{\@postlogfileval \space } \fi
	\special{empty.ps /@isp {false} def}
}
\def\DvipsSpecials{
	\special{ps::[begin] 	\@p@swidth \space \@p@sheight \space
			\@p@sbbllx \space \@p@sbblly \space
			\@p@sbburx \space \@p@sbbury \space
			startTexFig \space }
	\if@clip{
		\if@verbose{
			\ps@typeout{(clip)}
		}\fi
		\special{ps:: doclip \space }
	}\fi
	\if@angle
		\if@verbose{
			\ps@typeout{(clip)}
		}\fi
		\special {ps:: \@p@sangle \space rotate \space} 
	\fi
	\if@prologfile
	    \special{ps: plotfile \@prologfileval \space } \fi
	\if@decmpr{
		\if@verbose{
			\ps@typeout{psfig: including \@p@sfile.Z \space }
		}\fi
		\special{ps: plotfile "`zcat \@p@sfile.Z" \space }
	}\else{
		\if@verbose{
			\ps@typeout{psfig: including \@p@sfile \space }
		}\fi
		\special{ps: plotfile \@p@sfile \space }
	}\fi
	\if@postlogfile
	    \special{ps: plotfile \@postlogfileval \space } \fi
	\special{ps::[end] endTexFig \space }
}
%
% \psfig
% usage : \psfig{file=, height=, width=, bbllx=, bblly=, bburx=, bbury=,
%			rheight=, rwidth=, clip=}
%
% "clip=" is a switch and takes no value, but the `=' must be present.
\def\psfig#1{\vbox {
	% do a zero width hard space so that a single
	% \psfig in a centering enviornment will behave nicely
	%{\setbox0=\hbox{\ }\ \hskip-\wd0}
	%
	\ps@init@parms
	\parse@ps@parms{#1}
	\compute@sizes
	\ifnum\@p@scost<\@psdraft{
		\PsfigSpecials 
		% Create the vbox to reserve the space for the figure.
		\vbox to \@p@srheight sp{
		% 1/92 TJD Changed from "true sp" to "sp" for magnification.
			\hbox to \@p@srwidth sp{
				\hss
			}
		\vss
		}
	}\else{
		% draft figure, just reserve the space and print the
		% path name.
		\if@draftbox{		
			% Verbose draft: print file name in box
			% 10/93 TJD changed to fbox from frame
			\hbox{\fbox{\vbox to \@p@srheight sp{
			\vss
			\hbox to \@p@srwidth sp{ \hss 
			        % 10/93 TJD deleted to avoid ``_'' problems
				% \@p@sfile
			 \hss }
			\vss
			}}}
		}\else{
			% Non-verbose draft
			\vbox to \@p@srheight sp{
			\vss
			\hbox to \@p@srwidth sp{\hss}
			\vss
			}
		}\fi

	}\fi
}}
\psfigRestoreAt
\setDriver
\let\@=\LaTeXAtSign

%%% Definitions
\theoremstyle{plain}
\newtheorem{thm}{Theorem}[section]

\newtheorem{lem}[thm]{Lemma}
\newtheorem{prop}[thm]{Proposition}
\newtheorem{remark}[thm]{Remark}
\newtheorem*{conj}{Conjecture}
\newtheorem{cor}{Corollary}[thm]
\theoremstyle{definition}

\newtheorem*{mainthm}{Theorem A: Exponential splitting}
\newtheorem*{exthm}{Theorem B: Existence of separatrices}

\newcommand{\lemref}[1]{Lemma~\ref{#1}}

\newcommand{\propref}[1]{Proposition~\ref{#1}}

\newcommand{\vareps}{\varepsilon}
\newcommand{\He}{H_\epsilon}
\newcommand{\Fe}{F_\epsilon}

\newcommand{\R}{\mathbb{R}}
\newcommand{\N}{\mathbb{N}}
\newcommand{\C}{\mathbb{C}}
\newcommand{\Z}{\mathbb{Z}}
\newcommand{\D}{\mathbb{D}}
\newcommand{\Q}{\mathbb{Q}}
\newcommand{\s}{\mathbb{S}}
\newcommand{\h}{\hat}
\newcommand{\bigO}{\mathcal{O}}
\newcommand{\0}{\mathbf{0}}
\newcommand{\Ren}{\mathcal{R}}
\newcommand{\pRen}{p\mathcal{R}}
\newcommand{\diam}{\operatorname{diam}}
\newcommand{\dist}{\operatorname{dist}}
\newcommand{\sech}{\operatorname{sech}}
\newcommand{\meas}{\operatorname{meas}}
\newcommand{\cl}{\operatorname{cl}}
\renewcommand{\mod}{\operatorname{mod}}
\newcommand{\tl}{\tilde}
\newcommand{\wtl}{\widetilde}
\newcommand{\eps}{\epsilon}
\newcommand{\cQ}{{\mathcal Q}}
\newcommand{\cJ}{{\mathcal J}}
\newcommand{\cA}{{\mathcal A}}
\newcommand{\cU}{{\mathcal U}}
\newcommand{\cW}{{\mathcal W}}
\newcommand{\cM}{{\mathcal M}}
\newcommand{\cV}{{\mathcal V}}
\newcommand{\cF}{{\mathcal F}}
\newcommand{\cG}{{\mathcal G}}
\newcommand{\cT}{{\mathcal T}}
\newcommand{\cO}{{\mathcal O}}
\newcommand{\cI}{{\mathcal I}}
\newcommand{\cN}{{\mathcal N}}
\newcommand{\cP}{{\mathcal P}}
\newcommand{\cC}{{\mathcal C}}
\newcommand{\cH}{{\mathcal H}}
\newcommand{\cR}{{\mathcal R}}

\newcommand{\cL}{{\mathcal L}}
\newcommand{\cD}{{\mathcal D}}
\newcommand{\cB}{{\mathcal B}}
\newcommand{\cE}{{\mathcal E}}
\newcommand{\cS}{{\mathcal S}}
\newcommand{\cY}{{\mathcal Y}}
\newcommand{\cZ}{{\mathcal Z}}
\newcommand{\cX}{{\mathcal X}}
\newcommand{\cK}{{\mathcal K}}
\newcommand{\PP}{{\Bbb P}}
\newcommand{\CC}{{\Bbb C}}
\newcommand{\RR}{{\Bbb R}}
\newcommand{\TT}{{\Bbb T}}
\newcommand{\ZZ}{{\Bbb Z}}
\newcommand{\NN}{{\Bbb N}}
\newcommand{\DD}{{\Bbb D}}
\newcommand{\HH}{{\Bbb H}}
\renewcommand{\AA}{{\Bbb A}}
\newcommand{\QQ}{{\Bbb Q}}
\newcommand{\bA}{{\mathbf A}}
\newcommand{\bH}{{\mathbf H}}
\newcommand{\bT}{{\mathbf T}}
\newcommand{\bF}{{\mathbf F}}
\newcommand{\bB}{{\mathbf B}}
\newcommand{\bC}{{\mathbf C}}
\newcommand{\bW}{{\mathbf W}}
\newcommand{\bY}{{\mathbf Y}}
\newcommand{\bZ}{{\mathbf Z}}
\newcommand{\bO}{{\mathbf O}}
\newcommand{\bQ}{{\mathbf Q}}

\newcommand{\cren}{\cR_{\text cyl}}
\newcommand{\rmcm}{\cR_{\text McM}}
\newcommand{\rgy}{\cR}
\newcommand{\cyl}[1]{\overset{\circ}{#1}}

\def\sketch{\par\medskip \noindent {\bf Sketch of Proof.\ \ }}
\def\qed{\hfill $\square$\\ }
\def\qedof{\hfill $\square$\ }

\def \pvec #1#2{\begin{pmatrix}#1\\ \noalign{\vskip  -0 pt} #2\end{pmatrix}}
\def \hvec #1#2{\left(#1,\\ #2\right)\!}

\title[Exponential splitting near period-doubling in area-preserving maps]{Exponentially small splitting of separatrices near a period-doubling bifurcation in area-preserving maps\\[8pt]
%{\small \sc A preliminary draft, not for distribution}
}

\author{Denis Gaidashev}
\address{Uppsala University, Uppsala, Sweden}
\email{gaidash@math.uu.se}

\author{Marina Gonchenko} 
\thanks{The second author  has been partially supported by Juan de la Cierva-Formaci\'on Fellowship $FJCI-2014-21229$.  M. Gonchenko warmly thanks the Department of Mathematics of Uppsala University for their hospitality and support; during her stay at Uppsala University, M. Gonchenko was also partially supported by  the Knut and Alice Wallenberg Foundation grant $2013-0315$.}
\address{Universitat de Barcelona, Barcelona, Spain}
\email{mgonchenko@gmail.com}

\subjclass[2010]{}
\keywords{}
\date{\today}

\begin{document}
\begin{abstract}
We consider the conservative H\'enon family at the period-doubling bifurcation of its fixed point and demonstrate that the separatrices of the fixed saddle point nearing the bifurcation split exponentially: given that $\lambda_+$ is the smaller of the eigenvalues of the saddle point, the angle between the separatrices along the homoclinic orbit satisfies
$$\sin \alpha =  O(e^{-{\pi^2 \over \log |\lambda_+|}})+ O\left( e^{-2 (1-\kappa)  {\pi^2 \over  \log |\lambda_+|}} \right),$$
for any positive $\kappa<1$.
\end{abstract}

\maketitle

\section{Introduction} \label{sec:intro}

Period-doubling bifurcation in area preserving maps of $\RR^2$ has been a focus of many works.  A period-doubling bifurcation in an area-preserving one-parameter family $H_t$ happens when the complex-conjugate eigenvalues of a linearization of the map $H_t^{2^k}$ at a $2^k$-periodic point pass to the real line through the value $-1$. The corresponding stable elliptic periodic point becomes a flip saddle: at the same time a period $2^{k+1}$ stable elliptic  periodic orbit is ``born'' (passes from $\CC^2$ to $\RR^2$).

It has been demonstrated numerically, that a typical family of area-preserving maps undergoes a period doubling cascade  which accumulates on a particular, ``universal'', or infinitely renormalizable, map $H_{t^*}$ in the family \cite{DP,Hel,BCGG,Bou}. To date there are no rigorous results about existence or genericity of such cascades. 

In the dynamic plane, the $2^k$ unstable periodic orbits, $k \in \NN$, of the map $H_{t^*}$ accumulate on a Cantor set (cf \cite{GJ1,GJ2}). Existence of such a map in every typical family in an appropriate functional space has been demonstrated in \cite{EKW1,EKW2} and \cite{GJ3}. The properties of the Cantor set have been studied in the renormalization framework in \cite{GJM}. In particular, it has been demonstrated that the Cantor sets are stable in the sense of vanishing Lyapunov exponents, and that they are  rigid: the dynamics on the Cantor sets $C_{H_{t^*}}$ and $C_{F_{t^*}}$ for two infinitely renormalizable maps in the families $H_t$ and $F_t$ are conjugate by a $C^{1+\alpha}$-transformation.

In this paper we begin a study of hyperbolic sets associated with period-doubling bifurcations with the ultimate goal of demonstrating largeness and universality of the Hausdorff dimension of  these sets.

Existence and properties of the hyperbolic sets arising in saddle-center bifurcations in area-preserving maps and Hamiltonian systems have been extensively studied in the literature. Exponential splitting of stable and unstable leafs in these hyperbolic sets has been addressed in \cite{La1,La2,DR,DGJS,DGG1,DGG2,DGG3,Ge1,Ge2,GS,FS,LMS}. Furthermore, exponential smallness of the splitting  can be used together with the methods developed by P. Duarte \cite{Du1,Du2} to bound the thickness of the hyperbolic Cantor sets near the bifurcation, and, to eventually estimate the Hausdorff dimension of the Cantor sets.   This approach has been already used on several occasions in the setting of the restricted three body problem \cite{GK1,GK2,GMS}.

\begin{figure}
\centering
\vspace{2mm}       % Give the correct figure height in cm
\begin{tabular}{c c c}
{\includegraphics[height=45mm,width=45mm,angle=-90]{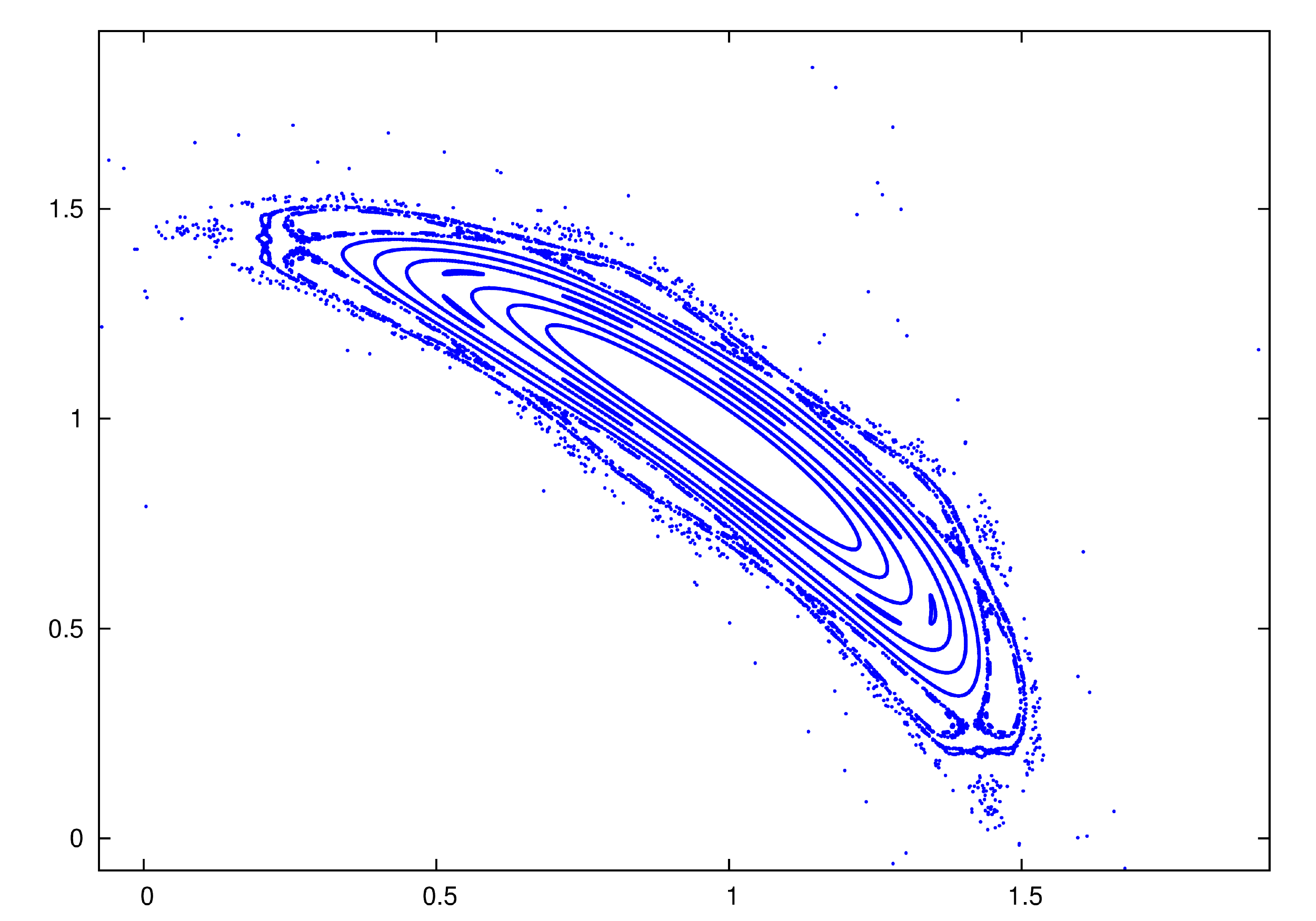}} & {\includegraphics[height=45mm,width=45mm,angle=-90]{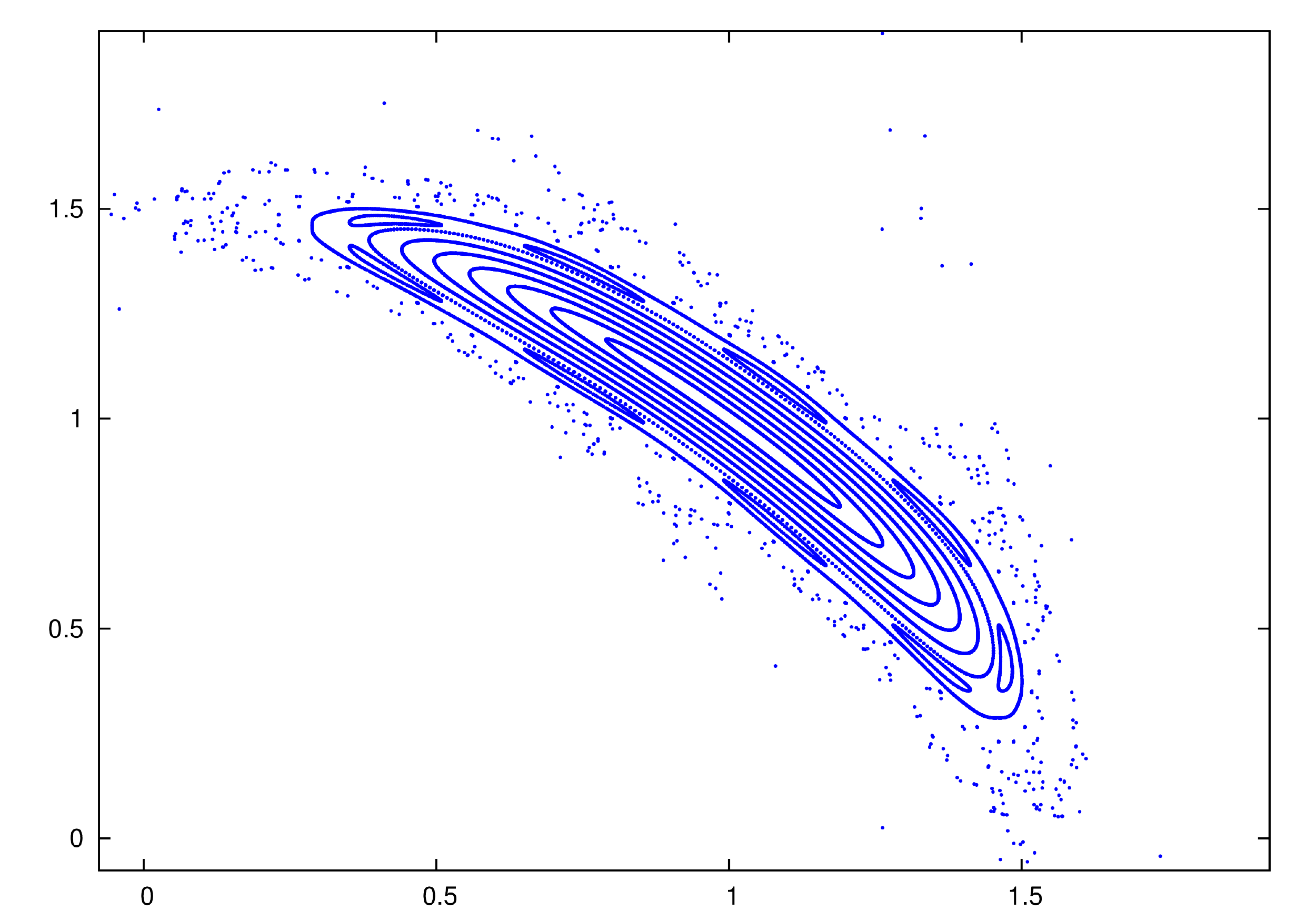}} &  {\includegraphics[height=45mm,width=45mm,angle=-90]{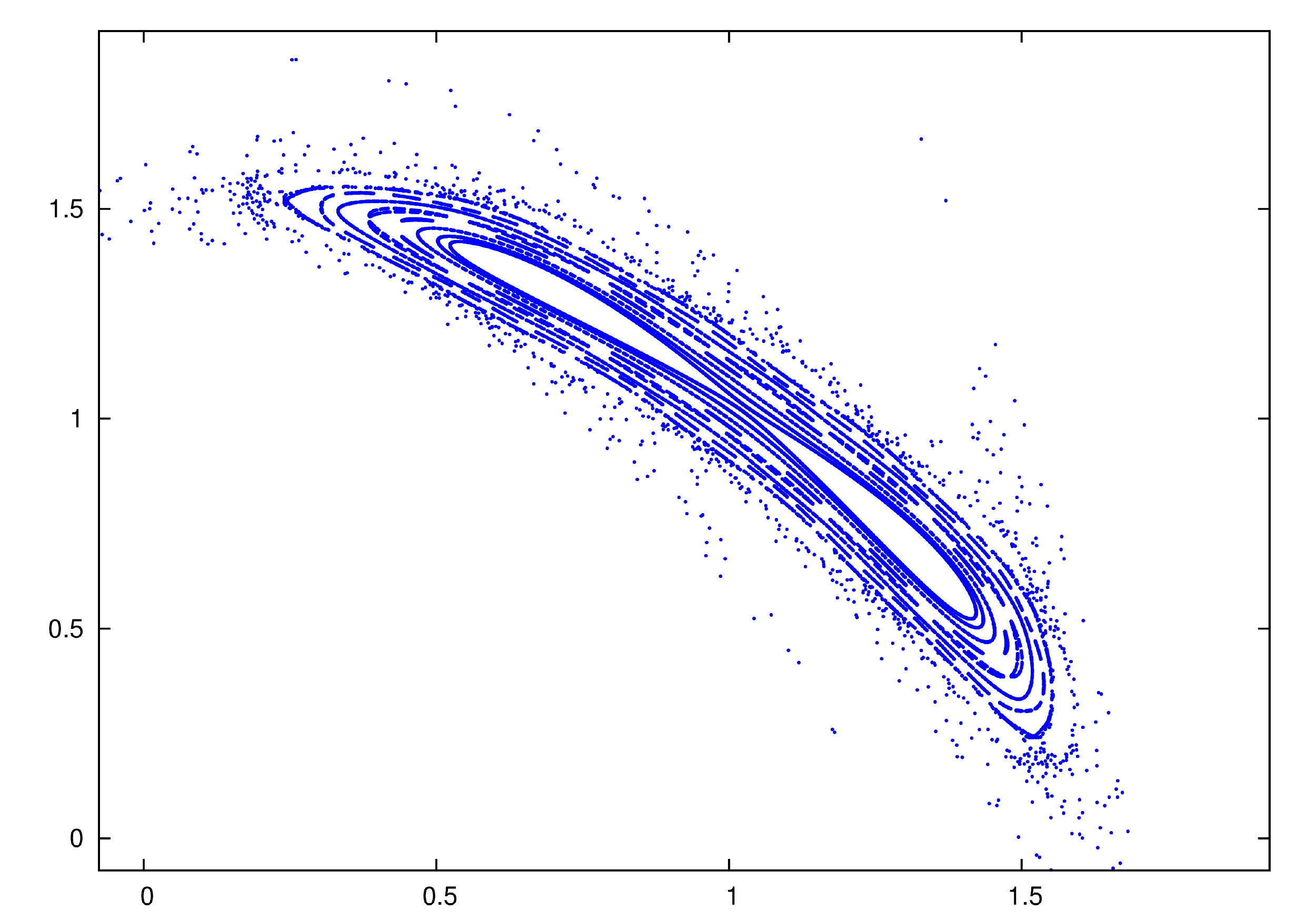}} \\
a) & b) & c)
\end{tabular} 
\caption{A period doubling bifurcation of the fixed point $(-1+\sqrt{4+\eps},-1+\sqrt{4+\eps})$ of the  H\'enon map $\He(x,y)=(y, -x +3 + \eps -y^2).$: a) $\eps=-0.1$, b) $\eps=0.0$, c) $\eps=0.1$.}
\label{bifurcation}
\end{figure}

We conjecture that the hyperbolic Cantor sets arising in homoclinic intersections in families of exact symplectic maps near a period-doubling bifurcation also approach the maximal Hausdorff dimension. Hyperbolicity of period-doubling renormalization causes this behaviour to be universal in typical families  - one-parameter families transversal to the codimension one renormalization stable manifold.

\begin{conj}
Let $t_k$ be an increasing convergent sequence of period-doubling bifurcation parameters in a typical family $H_t$ of exact symplectic diffeomorphisms of a subset of $\RR^2$ to $\RR^2$. 

Then there exists a sequence $\eps_k>0$ such that every map $H_t$ with $t \in (t_k,t_k+\eps_k)$ admits an invariant hyperbolic set $\Lambda_t$ whose Hausdorff dimension approaches the maximal,
$$HD(\Lambda_t) \ge  2- C (t-t_k)^\alpha,$$
where $C>0$ and $\alpha>0$ are  some universal constant.
\end{conj}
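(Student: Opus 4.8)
The plan is to establish the dimension estimate first for a single period-doubling bifurcation, in a form expressed through the multiplier $\lambda_+$ of the saddle rather than through $t-t_k$, and then to propagate it along the cascade using hyperbolicity of renormalization. Fix $k$ and set $G_t:=H_t^{2^k}$ near the $2^k$-periodic point undergoing the flip; for $t$ slightly past $t_k$ this point is a saddle $p_t$ with eigenvalues $\lambda_\pm$, $\lambda_+\lambda_-=1$, and $\log|\lambda_+|=:h_t\to 0^+$ (here $\lambda_+$ is the more negative eigenvalue, $|\lambda_+|>1$). By Theorem B the local invariant manifolds of $p_t$ persist and extend to global separatrices, and by Theorem A (leading term) these intersect transversally along a homoclinic orbit with splitting angle $\alpha_t$ obeying $\sin\alpha_t=O(e^{-\pi^2/h_t})$. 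Since the generic period-doubling relation gives $h_t\asymp(t-t_k)^{1/2}$, this is exponentially small in $(t-t_k)^{-1/2}$; below only $\log(1/\sin\alpha_t)\asymp h_t^{-1}$ will be used.

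The heart of the matter is a quantitative horseshoe construction in the homoclinic tangle of $p_t$, along the lines of P.~Duarte's study of conservative bifurcations combined with the Newhouse thickness / Bowen pressure formalism for the dimension. On a neighbourhood of fixed size one puts $G_t$ into a $C^r$ normal form close to the linear map $(x,y)\mapsto(\lambda_+x,\lambda_+^{-1}y)$; a fundamental domain of the unstable separatrix near the homoclinic point has length of order $\sin\alpha_t$, so it takes $n_t\asymp\log(1/\sin\alpha_t)/h_t\asymp h_t^{-2}$ iterates to stretch it to unit length. During this passage the strip sweeps across $\asymp n_t$ fundamental domains, and each, composed with the (bounded and transversal) transition map along the homoclinic excursion, contributes a symbol of a uniformly hyperbolic horseshoe, with total expansion per return comparable to $1/\sin\alpha_t$. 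Estimating the thickness of the resulting stable and unstable Cantor sets from below — one expects $\tau_t\gtrsim\log(1/\sin\alpha_t)\asymp h_t^{-1}$ — or, equivalently, solving the associated pressure equation, yields
$$HD(\Lambda_t)\ \ge\ 2-\frac{C_0}{\log(1/\sin\alpha_t)}\ \ge\ 2-C_1h_t\ \ge\ 2-C_2(t-t_k)^{1/2},$$
where $C_0$ depends only on the $C^r$-norms of the normal form and of the transition map. This is the single-bifurcation statement, with $\alpha=1/2$.

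Universality of the constant is then forced by passing from this ad hoc normal form to the one furnished by renormalization. The operator $\cR$ is hyperbolic at its fixed point $H_*$ with a one-dimensional unstable direction, and a typical family is transversal to $\cW^s(H_*)$; hence, choosing $\eps_k$ so that the renormalized parameter ranges over a fixed interval $(0,\eps_0)$, the maps $\cR^k(H_t)$, $t\in(t_k,t_k+\eps_k)$, converge as $k\to\infty$ — exponentially fast — to a universal one-parameter family $\{\Phi_s\}_{s\in(0,\eps_0)}$ crossing the codimension-one period-doubling locus through $H_*$. Since Hausdorff dimension is a $C^1$-invariant and the renormalization coordinate change is affine, $HD$ of the hyperbolic set of $H_t$ equals that of the corresponding set of $\cR^k(H_t)$; applying the single-bifurcation estimate to $\cR^k(H_t)$ — which is $C^r$-close to $\Phi_s$ with constants inherited from $\Phi_*$ alone — gives $HD(\Lambda_t)\ge 2-C_*s^{1/2}$ with $C_*$ universal. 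Re-expressing $s$ through the multiplier $\lambda_+(t)$ of the $2^k$-periodic point — whose dependence on $t$ is itself asymptotically universal after rescaling — produces the asserted bound.

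The principal obstacle is the quantitative horseshoe step: one must control the non-linear corrections to the local normal form and to the transition map uniformly in $t$ as $h_t\to0$, verify genuine hyperbolicity of the candidate horseshoe by a cone-field or covering-relation argument with fields that degenerate at rate $h_t$, and carry the thickness/pressure estimates with explicit dependence on $\sin\alpha_t$ and $h_t$ rather than only in the limit. A secondary difficulty — the reason the statement is still a conjecture — is the bookkeeping needed to convert the universality of Theorem A into uniformity of $C$ in the variable $t-t_k$ along the whole cascade: the renormalization rescaling introduces Feigenbaum factors relating $s$ to $t-t_k$, so the genuinely universal statement is the one phrased through $\lambda_+(t)$, the $(t-t_k)^\alpha$ form following only after inserting the (family-dependent but asymptotically universal) change of variables $t\mapsto\lambda_+(t)$.
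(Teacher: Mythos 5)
What you have written is not, and cannot be, a proof: the statement you were asked about is stated in the paper as a \emph{Conjecture}, and the authors say explicitly that ``The proof of the conjecture will be split in three steps,'' of which the present paper carries out only the first (Theorem~A, the exponentially small splitting). The paper does not contain a proof of the conjecture, so there is no proof to compare your attempt against. Your write-up is, in effect, a more detailed restatement of the paper's announced three-step program (exponential splitting via Theorems~A and~B; a Duarte-style horseshoe with thickness/pressure estimates yielding $HD(\Lambda_t)\ge 2-C_0/\log(1/\sin\alpha_t)$; propagation along the cascade via hyperbolicity of renormalization), and you yourself concede at the end that the middle step is unproved and that the $(t-t_k)^\alpha$ reparametrization introduces further unverified bookkeeping.

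Concretely, the gaps are exactly where you flag them, and they are not small. The quantitative horseshoe step --- control of nonlinear corrections to the Birkhoff normal form and to the transition map \emph{uniformly} as $h_t\to 0$, verification of cone-field hyperbolicity with fields degenerating at rate $h_t$, and the lower bound $\tau_t\gtrsim\log(1/\sin\alpha_t)$ on the thickness --- is the entire content of ``step two,'' and your text only asserts the expected outcome without supplying the cone construction, the covering relations, or the pressure computation. Moreover Theorem~A as proved does not guarantee that the leading coefficient $\Theta_1$ is nonzero (the paper explicitly remarks this), so transversality of the homoclinic intersection, which your horseshoe construction presupposes, is not yet established by the paper; you would need either a numerical verification of $\Theta_1\neq 0$ or an independent argument. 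Finally, the passage from the $h_t$ (equivalently $\lambda_+$) parametrization to $(t-t_k)^\alpha$ requires uniform-in-$k$ control of the Feigenbaum rescaling factors, which again is asserted but not derived. What you have is a correct and well-informed \emph{plan}, coinciding with the authors' own, not a proof.
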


The proof of the conjecture will be split in three steps. 

In the first step we prove the exponential smallness of the splitting between the separatrices of the unstable periodic point nearing a period-doubling bifurcation. This will be subject of the present paper.

In the second step, one uses the exponential smallness of the splitting  to  estimate the Hausdorff dimension of the Cantor sets. 

The third step uses the hyperbolicity of renormalization to demonstrate that these estimates are universal for a class of typical families.

Our main result of the present paper will be an estimate on a splitting angle $\alpha$ between the separatrices $\gamma^\pm(\tau)$ in the homoclinic orbit $\gamma^+ \cap \gamma^-=\{ \gamma^-(\tau_i) \}_{i=0}^\infty=\{ \gamma^+(\tau_i) \}_{i=0}^\infty$:  
$$\sin \alpha = {C \over  \| \dot{\gamma}^+(\tau_i)\|  \| \dot{\gamma}^-(\tau_i)\|   } e^{-{\pi^2 \over h}}+ O\left( e^{-2  {\pi^2 \over  h}} \right),$$
where  $\|\dot{\gamma}^\pm(\tau_i)\|$ are lengths of the corresponding tangent vectors at the points of the homoclinic orbit, $C$ is some constant and $h=|\lambda_\pm+1|+O(h^2)$ is the deviation of the eigenvalues $\lambda_\pm$ of a saddle periodic point near the period-doubling bifurcation through $-1$.

\begin{figure}[!t]
\centering
\vspace{2mm}       % Give the correct figure height in cm
\begin{tabular}{c c c}
{\includegraphics[height=45mm,width=45mm,angle=-90]{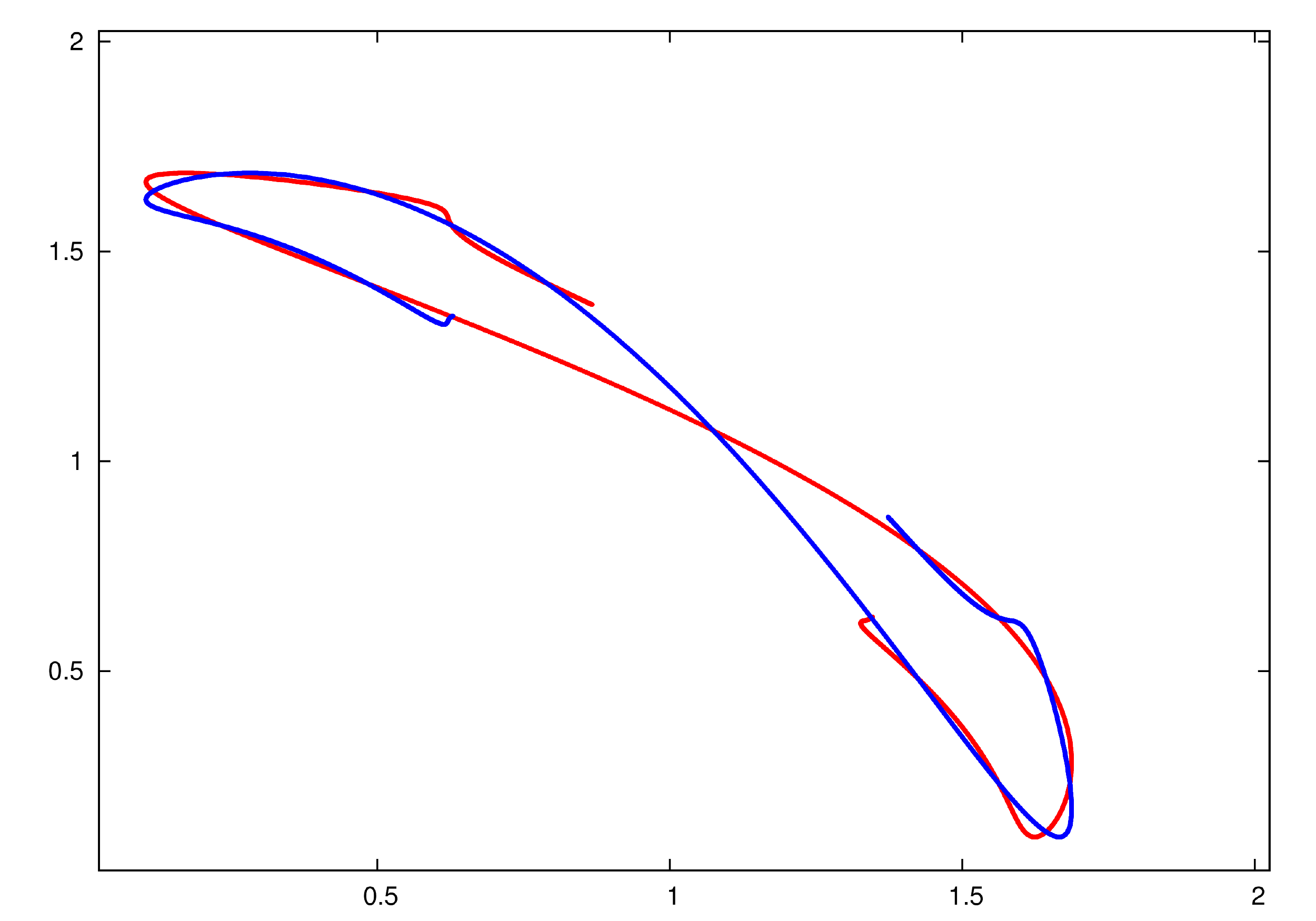}} & {\includegraphics[height=45mm,width=45mm,angle=-90]{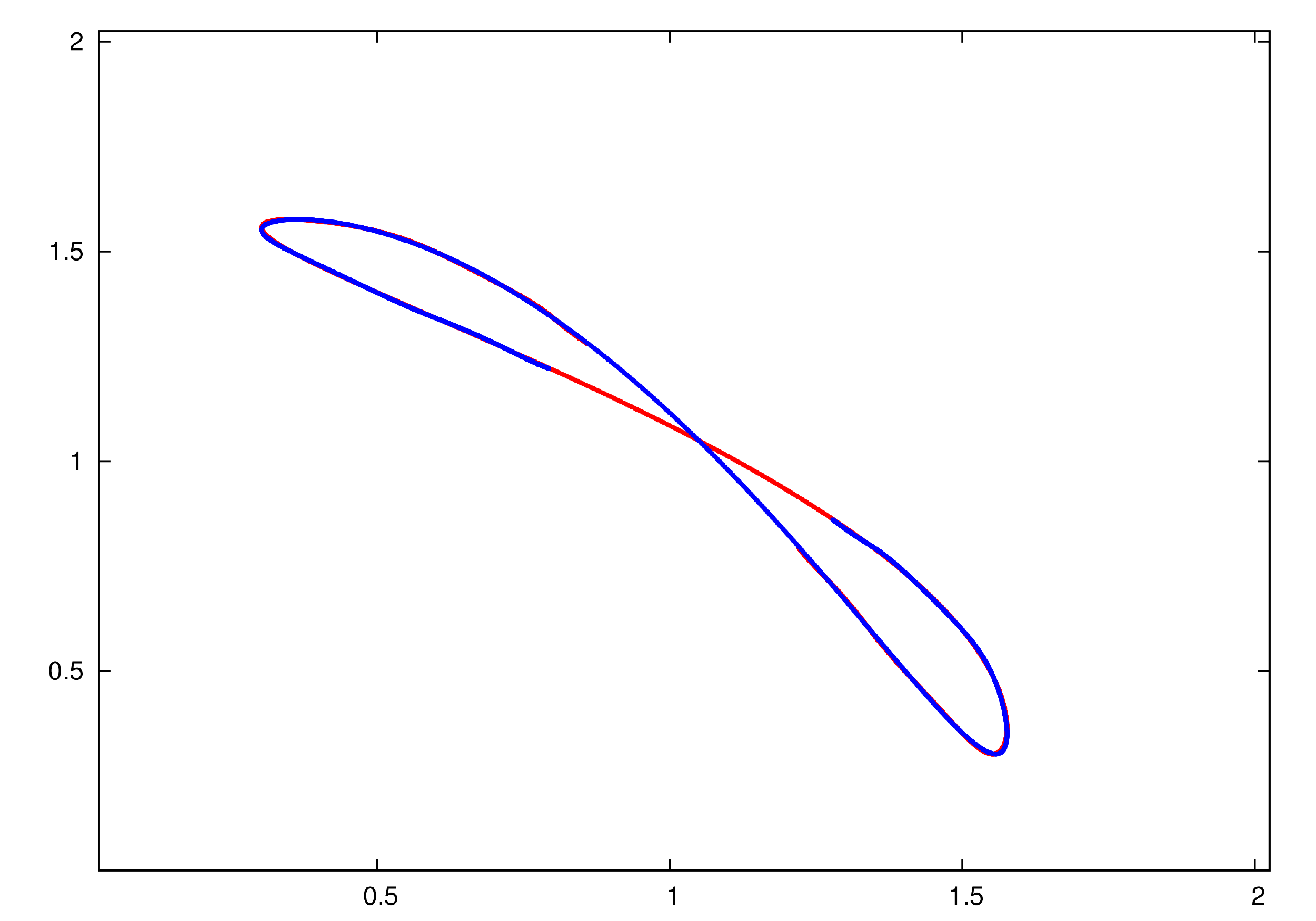}} &  {\includegraphics[height=45mm,width=45mm,angle=-90]{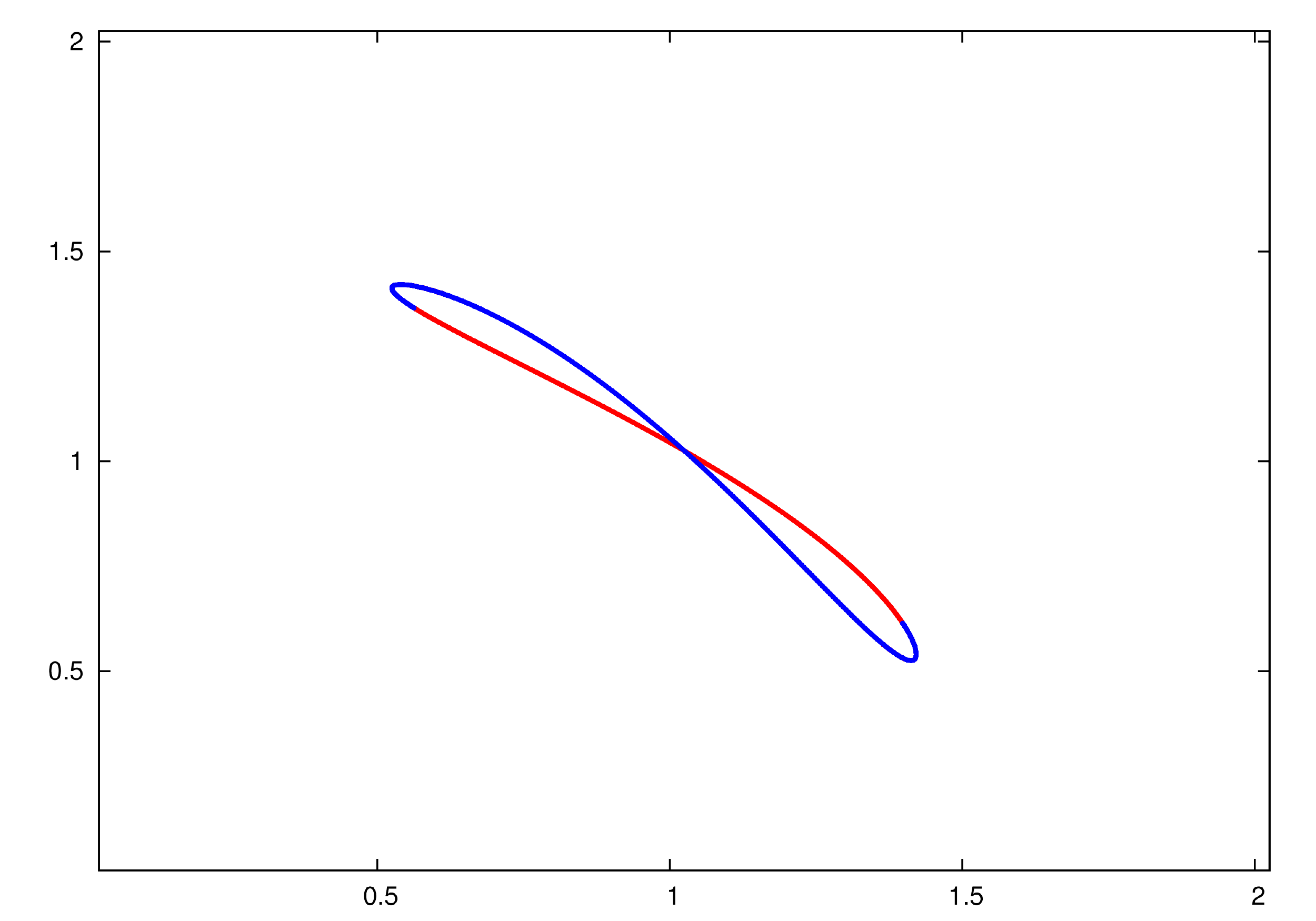}} \\
a) & b) & c)
\end{tabular} 
\caption{Stable and unstable manifolds at the fixed point $(-1+\sqrt{4+\eps},-1+\sqrt{4+\eps})$ of $\He$: a) $\eps=0.3$, b) $\eps=0.2$, c) $\eps=0.1$.}
\label{bifurcation}
\end{figure}

\section{Expansion of separatrices in a small parameter} \label{sec:expansion} 

\subsection{Expansion in the lowest order}

We consider the area-preserving H\'enon family $H_\eps$ close to the period-doubling bifurcation parameter $\eps=0$,
\begin{equation}
\He(x,y)=(y, -x +3 + \eps -y^2).
\end{equation}

For $\eps>0$, the map has a pair of stable nodes and a saddle point at $p_\eps=(x_\eps,y_\eps)$
\begin{equation} \label{eq:fp}
x_\eps=y_\eps=-1+ \sqrt{4+\eps}.
\end{equation}

We will now consider the second iterate of $\He$,
\begin{equation} \label{eq:H2}
\He^2(x,y)=(-x +3 + \eps -y^2,-y+3+\eps-(-x+3+\eps-y^2)^2).
\end{equation}

The second iterate has the following derivative at $(x_\eps,y_\eps)$.
\begin{equation}\label{eq:der}
D \He^2(x_\eps,y_\eps)=   \left[
\begin{tabular}{ c c}
  $-1 $    &                 $ -2 y_\eps$ \\
  $-2 y_\eps^2 + 2 \eps - 2 x_\eps + 6 $  &   $ -4 (y_\eps^2 - \eps + x_\eps - 3) y_\eps - 1 $ 
\end{tabular}
\right],
%\left[ -1   \quad - 2 y_\eps \atop \phantom{-1} 2 y_\eps \quad -1+4 y_\eps^2   \right],
\end{equation}
and the following eigenvalues
\begin{eqnarray} \label{eq:evalues}
\nonumber \lambda_\pm &=& 9+2 \eps -4 \sqrt{ 4 + \eps  } \pm 2 \sqrt{ 36 +13 \eps - ( 4 \eps +18 ) \sqrt{4+\eps  }+\eps^2  }\\
\nonumber &=&1\pm\sqrt{2 \eps }+\eps\pm{ 9 \over 32} \sqrt{2} \eps^{3 \over 2} + o(\eps^{3 \over 2}).
\end{eqnarray}

The map $\He^2$ is reversible  through the involution $R \circ \He$, $R(x,y)=(y,x)$. 

Introduce a small parameter $h$,
\begin{equation}
h=\log \lambda_+=\sqrt{2 \eps} +o(\eps^{1 \over 2}) \implies \eps={1 \over 2} h^2 +{5 \over 192} h^4+{25\over 73728} h^6 + O(h^8).
\end{equation}

We will now pass to the coordinate system where the map $\He^2$ assumes the form $F_0+\eps F_1 + o(\eps)$, 
\begin{equation}
F_0(x,y)=(x+y,y)+O(x^i y ^j), \quad i+j \ge 2.
\end{equation}
 
 This is achieved through a conjugation by $T(x,y)=(y/2-1/2, x+y-2)$. The second iterate in the new coordinate system assumes the form
\begin{eqnarray}
\nonumber \Fe(x,y) \hspace{-3mm}&=&\hspace{-3mm} T(\He^2(T^{-1}(x,y)))=F_0(x,y)+\eps F_1(x,y) + \eps^2 F_2(x,y) + O(\eps^3), \\
\nonumber F_0(x,y)\hspace{-3mm}&=&\hspace{-3mm} (x\hspace{-0.6mm}+\hspace{-0.6mm}y\hspace{-0.6mm}+\hspace{-0.6mm}2 x^2\hspace{-1.5mm} -\hspace{-0.6mm}2 x y\hspace{-0.6mm} - \hspace{-0.6mm}{1 \over 2} y^2  \hspace{-1.5mm}- \hspace{-0.6mm}8 x^3\hspace{-1.5mm}-\hspace{-0.6mm} 4 x^2 y \hspace{-0.6mm}-\hspace{-0.6mm}8 x^4\hspace{-0.6mm}, \hspace{-0.3mm}y\hspace{-0.6mm}-\hspace{-0.6mm}4 x y\hspace{-0.6mm} -\hspace{-0.6mm}y^2\hspace{-1.5mm} -\hspace{-0.6mm}16 x^3\hspace{-1.5mm}-\hspace{-0.6mm}8 x^2 y\hspace{-0.6mm} -\hspace{-0.6mm}16 x^4 \hspace{-0.3mm}), \\
\nonumber F_1(x,y)\hspace{-3mm}&=&\hspace{-3mm} (2 x+y +4 x^2 -{1 \over 2}, 4 x +2 y + 8 x^2), \\
\nonumber F_2(x,y) \hspace{-3mm}&=&\hspace{-3mm} ( -{1 \over 2},-1).
\end{eqnarray}

We will also use the notation 
\begin{equation}
\label{eq:Ge} G_\eps=T \circ H_\eps \circ T^{-1}.
\end{equation}
Let $\gamma^{\pm}(t)=(x^\pm(t),y^\pm(t)$ be the parametrization of the stable ($+$) and unstable ($-$) manifolds of $w_\eps$,
\begin{equation}\label{eq:newfp}
w_\eps=T(p_\eps)=({1 \over 2}\sqrt{4 + \eps}-1, 2 \sqrt{4+\eps}-4 ),
\end{equation}
such that
\begin{equation} \label{eq:param}
\Fe(\gamma^\pm(t))=\gamma^\pm (t+h).
\end{equation}

Necessarily, 
\begin{equation} \label{eq:bnd1}
\lim_{t \rightarrow \pm \infty} \gamma^\pm(t)=w_\eps.
\end{equation}

We will momentarily assume that $\gamma^\pm$ intersect along a homoclinic orbit.  Fix one of these homoclinic points $q_0$. Then, we can parametrize the pieces of $\gamma^\pm$ bounded by $w_\eps$ and $q_0$ simultaneously by a curve $\gamma=(x,y): \RR \mapsto \RR^2$ so that  
\begin{equation} \label{eq:bnd2}
\gamma(0)=\gamma^+(0)=\gamma^-(0)=q_0,
\end{equation}
and
$$\gamma(t)=\gamma^+(t), \quad \gamma(-t)=\gamma^-(t), \quad t \ge 0.$$ 

By reversibility of $\Fe$, 
$$\gamma(-t)=S (\gamma(t)),$$
where 
$$S=T \circ  R \circ H_\eps \circ T^{-1}.$$

We will further assume that $\gamma(t)$ admits the following expansion in powers of $h$.
\begin{eqnarray}
\nonumber x(t)&=&h \sum_{k=0}^\infty X_k(t) h^k, \\
\nonumber y(t)&=&h^2 \sum_{k=0}^\infty Y_k(t) h^k.
\end{eqnarray}
Then, the equation  $\Fe(\gamma(t))=\gamma(t+h)$ together with boundary condition $(\ref{eq:bnd1})$ assumes the following form in the lowest and the next lowest  orders in $h$.
\begin{eqnarray}
\nonumber  \gamma(t+h)-\gamma(t)\hspace{-2mm}&=&\hspace{-3mm}\left( h^2 \left(Y_0(t)+2 X_0(t)^2-{1 \over 4} +o(h)  \right) , \right. \\
 \nonumber  \phantom{\gamma(t+h)-\gamma(t)}\hspace{-2mm}&\phantom{=}&\hspace{-3mm}\phantom{(} \left. h^3 \left(-4 X_0(t) Y_0(t)-16 X_0(t)^3 -16 X_0(t)^3 +2 X_0(t) +o(h) \right) \right), \\
\nonumber \lim_{t \rightarrow \pm \infty} (X_0(t),Y_0(t))\hspace{-2mm}&=&\hspace{-3mm}\left(0, {1  \over 4}\right) + o(h).
\end{eqnarray}
therefore,
\begin{eqnarray}
\label{eq:X0flow} X_0'(t) \hspace{-2mm}&=&\hspace{-2mm}Y_0(t)+2 X_0(t)^2-{1 \over 4} +o(h), \\
\label{eq:Y0flow} Y_0'(t)\hspace{-2mm}&=&\hspace{-3mm}-4 X_0(t) Y_0(t)\hspace{-1mm}-\hspace{-1mm}16 X_0(t)^3 \hspace{-1mm}-\hspace{-1mm}16 X_0(t)^3 \hspace{-1mm}+\hspace{-1mm}2 X_0(t)\hspace{-1mm}+\hspace{-1mm}o(h), \\
\label{eq:bnd3} \lim_{t \rightarrow \pm \infty} \hspace{-1mm}(X_0(t),Y_0(t))\hspace{-2mm}&=&\hspace{-2mm}\left(0, {1  \over 4}\right) +o(h).
\end{eqnarray}

The solution to the equations $(\ref{eq:X0flow})-(\ref{eq:bnd3})$ in the limit $h \rightarrow 0$  is as follows.

\begin{eqnarray}
\label{eq:X0} X_0(t) &=& {1 \over 2} \sech (t), \\
\label{eq:Y0} Y_0(t) &=& -{1 \over 2} \sech (t) \left( \tanh (t) +\sech (t) \right) +{1 \over 4}
\end{eqnarray}

\begin{figure}
\centering
\vspace{2mm}       % Give the correct figure height in cm
\includegraphics[height=50mm,width=45mm, angle=0]{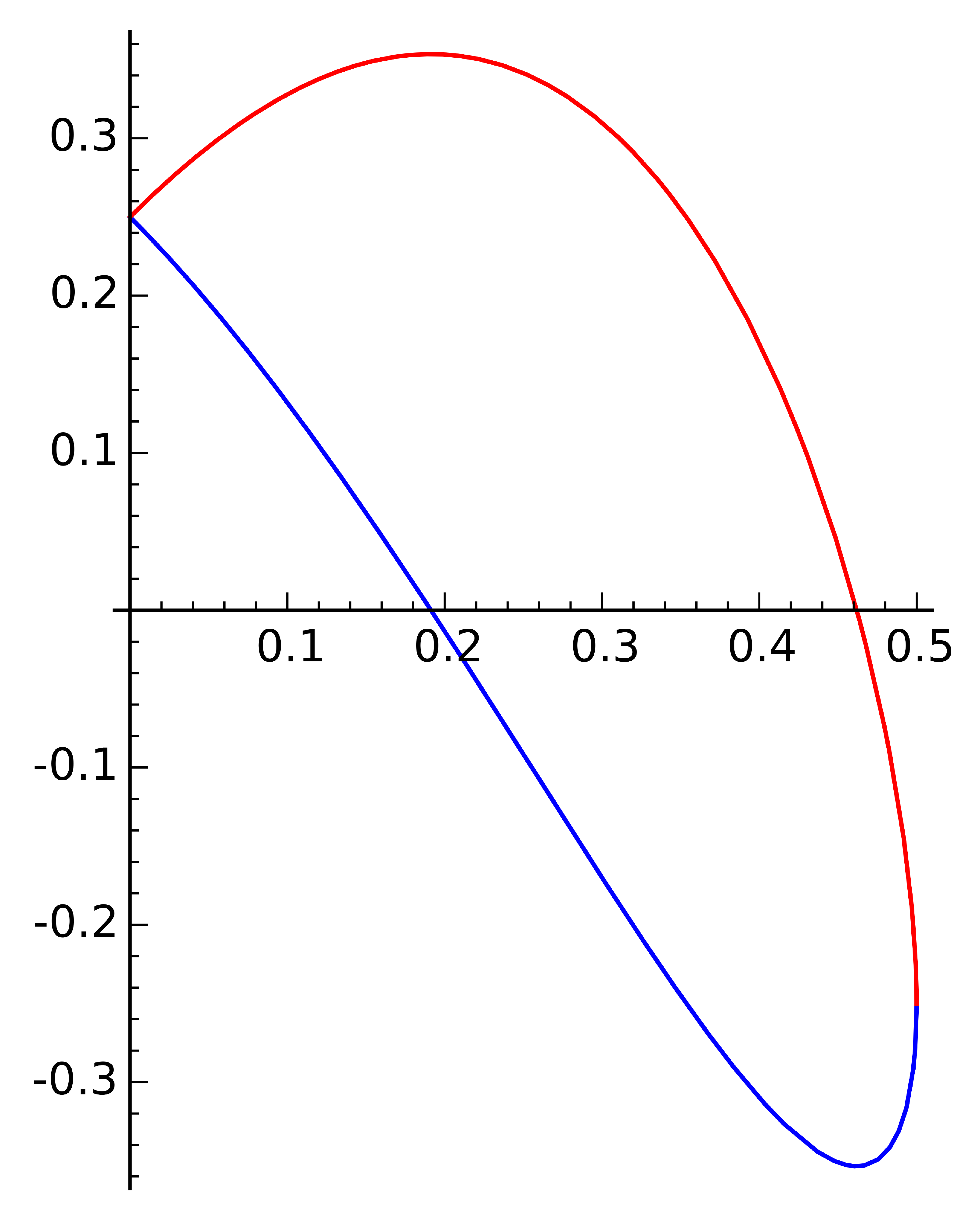} 
\caption{The curve $(X_0(t),Y_0(t))$ for $t \in \RR$. The restriction of the curve to positive $t$ is given in blue, to negative $t$ - in red.}
\label{fig:xy0}
\end{figure}

The equations~(\ref{eq:X0}) and~(\ref{eq:Y0}) demonstrate that the separatrices $\gamma^\pm(t)$, considered in the lowest order in $h$, constitute a homoclinic connection. The standard arguments imply that the separatrices, if shown to exist, must intersect along a homoclinic orbit $\{q_i\}_{i=0}^\infty$,
$$q_i=\gamma^+(t_i^+)=\gamma^-(t_i^-).$$

We define the {\it primary homoclinic point} $q_0$ to be the intersection with the smallest absolute values of the parametrizations $t^+_i$ and $t^-_i$ of the intersections.  

Let $\Omega=d x \wedge d y$ be the canonical symplectic form in $\RR^2$. Define the {\it homoclinic invariant} as 
$$\theta:= \Omega(\dot{\gamma}^+(t_0^-),\dot{\gamma}^+(t_0^+)).$$
Since $\Fe$ is an exact symplectic diffeomorphism, we have that
$$\theta= \Omega(\dot{\gamma}^+(t_i^-),\dot{\gamma}^+(t_i^+)), \ i \in \NN.$$

The homoclinic invariant is straightforwardly related to the angle $\alpha$  between the tangents to the separatrices at the homoclinic points:
$$\sin \alpha_i={ \theta \over \| \dot \gamma^+(t_i^+)\|  \| \dot \gamma^-(t_i^-)\|   }.$$

We are now ready to give a precise statement of our main theorem. Because of the translational freedom of the parametrization, we can assume w.l.o.g. that $t^+_0=t^-_0=0$.
\begin{mainthm}\label{mainthm}
If $h$ is sufficiently small, then there exists a constant $\Theta_1$, and for any positive $\kappa<1$, a constant $C>0$, such that homoclinic invariant satisfies
$$\left|\theta -\Theta_1 e^{-{\pi^2 \over h}} \right| < C e^{-2 (1-\kappa) {\pi^2 \over  h}}.$$
\end{mainthm}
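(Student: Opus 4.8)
The plan is to follow the classical paradigm for exponentially small splitting (à la Lazutkin, Gelfreich, Delshams–Ramírez-Ros), adapted to the period-doubling setting where the "inner'' problem is governed by the limiting vector field $(\ref{eq:X0flow})$–$(\ref{eq:Y0flow})$ whose homoclinic loop is $(\ref{eq:X0})$–$(\ref{eq:Y0})$. The first step is to promote the formal expansion $x(t)=h\sum X_k(t)h^k$, $y(t)=h^2\sum Y_k(t)h^k$ to an \emph{actual} parametrization of $\gamma^\pm$: one shows that the truncated series solves $\Fe(\gamma(t))=\gamma(t+h)$ up to an error $O(h^{N})$ uniformly on a real interval, and then invokes a contraction/implicit-function argument (or an iteration of the graph transform for $W^{s,u}$) to produce genuine solutions $\gamma^\pm$ that are $O(h^{N})$-close to the truncation on $t\in\R$; the existence of $\gamma^\pm$ and of the homoclinic orbit, asserted ``momentarily'' in the text, is established here, together with quantitative control of $\|\dot\gamma^\pm\|$ at the primary homoclinic point $q_0$ (near the top of the loop $X_0=\tfrac12$).

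The second and central step is to move to the complex-analytic domain. The unperturbed homoclinic solution $X_0(t)=\tfrac12\sech t$, $Y_0(t)$ has singularities at $t=\pm i\pi/2$; these dictate the width of the analyticity strip and hence the exponent. I would show that $\gamma^\pm(t)$ extend holomorphically to complex strips $|\mathrm{Im}\,t|<\pi/2-\delta(h)$ with $\delta(h)\to0$, with explicit bounds degenerating like a power of $(\pi/2-|\mathrm{Im}\,t|)$ near the boundary. On such a strip the difference $\gamma^+(t)-\gamma^-(t)$ satisfies a linearized-cocycle (variational) equation along $\gamma$; because $\gamma^\pm$ are exponentially close on the reals, $\Omega(\dot\gamma^+,\dot\gamma^-)=:\theta(t)$ is a well-defined small quantity, and exact symplecticity makes it $h$-periodic, $\theta(t+h)=\theta(t)$ — this is the discrete analogue of the splitting function being constant along the flow. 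Expanding $\theta$ in Fourier series in $t$ with period $h$ and shifting the contour to $\mathrm{Im}\,t=\pm(\pi/2-\delta)$ yields $\theta=\Theta_1 e^{-\pi^2/h}+(\text{higher harmonics})$, where the leading coefficient $\Theta_1$ is computed from the residue/asymptotics of the inner equation at the singularity $t=i\pi/2$ (a Melnikov-type integral evaluated via residues, independent of $h$ in the limit).

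The third step is the error estimate $|\theta-\Theta_1 e^{-\pi^2/h}|<C e^{-2(1-\kappa)\pi^2/h}$. The $k$-th Fourier harmonic of an $h$-periodic function analytic on $|\mathrm{Im}\,t|<\rho$ is $O(e^{-2\pi k\rho/h})$; taking $\rho=\pi/2-\delta(h)$ and $k=1$ controls the main term, $k\ge2$ gives the stated remainder, \emph{provided} $\delta(h)$ can be taken $o(h)$ — which is exactly why the factor $(1-\kappa)$, for any $\kappa<1$, appears rather than a clean $2$. Concretely, $2\pi\cdot 2\cdot(\pi/2-\delta)/h = 2(1-2\delta/\pi)\cdot(\pi^2/h)\ge 2(1-\kappa)\pi^2/h$ once $h$ is small. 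One also has to bound the contribution of the difference between the true $\gamma^\pm$ and the formal series inside the strip, and the subleading terms in the relation $\theta=\sin\alpha\cdot\|\dot\gamma^+\|\|\dot\gamma^-\|$; all of these are $O(e^{-2(1-\kappa)\pi^2/h})$ by the same harmonic bound with a sufficiently long formal truncation $N=N(\kappa)$. Finally, identifying $\Theta_1$ as a nonzero constant requires checking that the relevant residue of the inner problem does not vanish; this is a finite computation with $\sech$, $\tanh$ and their derivatives at $t=i\pi/2$.

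The main obstacle I anticipate is the analytic continuation with \emph{uniform} control right up to the singular points $t=\pm i\pi/2$: near the complex singularity the naive perturbative expansion in $h$ breaks down (successive $X_k$ acquire higher-order poles), so one must match it to a solution of the rescaled ``inner equation'' and show the two agree on an overlap annulus. Getting the width-loss $\delta(h)=o(h)$ — equivalently, pushing the matching close enough to the singularity — is precisely what forces the $\kappa$-loss in the exponent and is the technically heaviest part of the argument; everything else (existence of $\gamma^\pm$, $h$-periodicity of $\theta$, the Fourier/contour estimate, and the residue computation of $\Theta_1$) is comparatively standard.
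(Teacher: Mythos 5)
Your outline is the classical Lazutkin--Gelfreich--Delshams scheme carried out in the \emph{outer} variable $t$: prove existence of $\gamma^\pm$, continue them to shrinking strips $|\Im t|<\pi/2-\delta(h)$, use the $h$-periodicity of the homoclinic invariant and Fourier/contour-shifting to extract the exponent, and control $\Theta_1$ by an inner--outer matching near the complex poles. That is a viable route, and it is \emph{not} the one the paper takes. The paper works almost entirely in the \emph{inner} variable $\tau=(t-i\pi/2)/h$ on an $h$-independent domain $\cD_{A,\delta}^\pm$: existence of the inner solutions $\alpha^\pm_\omega$ is proved by a contraction in weighted Banach spaces $\cX_{\mu}(\cD^\pm_{A,\delta})$ (Propositions~\ref{prop:unst_existence}, \ref{prop:st_existence} / Theorem~B); the difference $\alpha_0^+-\alpha_0^-$ is then shown to satisfy a scalar second-order linear \emph{difference} equation~(\ref{eq:hatuso}), whose two independent solutions $\phi_\pm$ are taken from Hunter's asymptotic theory, and Lazutkin's Wronskian identity (Lemma~\ref{Lazlemma}) expresses the actual solution as a periodic combination of them, giving the bound $|\hat u(\tau)|\le C|\tau|^{1/4}\|\hat u\|_\mu e^{-(2\pi-\kappa)|\Im\tau|}$ (Proposition~\ref{expprop}) without ever shifting contours in $t$. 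Finally the normal-splitting quantity $\hat\Theta(\tau)=\det[\dot\alpha_0^-,\ \alpha_0^+-\alpha_0^-]$ obeys a first-order inhomogeneous difference equation~(\ref{eq:hatT}) with quadratically (hence exponentially) small right-hand side, and Lazutkin's inverse-difference operator (Lemma~\ref{prop:Laprop}) produces an exponentially small particular solution $\hat\Theta_p$; $\hat\Theta-\hat\Theta_p$ is then exactly $1$-periodic in $\tau$, and its first Fourier coefficient is $\Theta_1$. Substituting back $\tau=(t-i\pi/2)/h$ gives~(\ref{eq:mainbound}).

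What each route buys: working in $\tau$ eliminates the delicate $\delta(h)=o(h)$ width-loss bookkeeping you flag as the ``technically heaviest part''; the exponent $2\pi|\Im\tau|$ rescales directly to $\pi^2/h$ with no contour juggling, and the domains are $h$-uniform. Your route, if pushed through, is closer to yielding an explicit formula for $\Theta_1$ -- but note that your characterisation of $\Theta_1$ as ``a Melnikov-type integral evaluated via residues'' is misleading in this class of problems: $\Theta_1$ is a Stokes constant of the inner (parabolic) equation, a transcendental quantity not computable from residues of the outer loop $(X_0,Y_0)$, which is precisely why the paper's Remark after the theorem explicitly declines to assert $|\Theta_1|>0$. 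Also note that the paper never actually executes your ``first step'' (existence of $\gamma^\pm$ and of the primary homoclinic point is \emph{assumed}, not proved), nor the inner--outer matching that would rigorously relate $\hat\Theta$ built from $\alpha^\pm_0$ to the true homoclinic invariant of $\Fe$; your proposal is more scrupulous on both of these points, though it would then inherit the full weight of the matching argument you identify as the main obstacle.
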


\begin{remark}
At present we do not demonstrate that $|\Theta_1|>0$. Positivity of this constant would be  an important result, whose non-trivial proof is outside of the scope of this paper. 
\end{remark}

\subsection{Expansion in the higher orders}

Writing out the equation for $\gamma(t+h)-\gamma(t)$ in the next order in $h$ results in the following  equations for $X_1(t)$ and $Y_1(t))$ in the limit $h \rightarrow 0$:
\begin{eqnarray}
\label{eq:X1flow} X_1'(t) \hspace{-2mm} &=& \hspace{-2mm}  {1 \over 4} \sech(t) \hspace{-0.6mm}-\hspace{-0.6mm}{1 \over 2} \sech^3(t) \hspace{-0.6mm}+\hspace{-0.6mm} {1 \over 2} \sech^2(t) \tanh(t) \hspace{-0.6mm}+ \hspace{-0.6mm}2 \sech(t) X_1(t)  \hspace{-0.6mm}+ \hspace{-0.6mm}Y_1(t), \\
\nonumber   Y_1'(t)\hspace{-2mm}&=&\hspace{-2mm}-{1 \over 4} \sech^4(t) - {1 \over 4} \sech(t)^2 \tanh^2(t) + {1 \over 4} \sech^2(t) + {1 \over 4} (2 \sech^3(t) -  \\
\label{eq:Y1flow} \hspace{-2mm}&\phantom{=}& \hspace{-2mm}10 \sech^2(t) - 2 \sech(t) \tanh(t) - 1) X_1(t) - 2 \sech(t) Y_1(t) - {1 \over 16}, 
\end{eqnarray}
and
$$\lim_{t \rightarrow \pm \infty} (X_1(t),Y_1(t))=\left({1 \over 16}, 0 \right).$$

This system $ (\ref{eq:X1flow})-(\ref{eq:Y1flow})$ is equivalent to the following second order-differential equation
\begin{eqnarray}
\nonumber X_1''(t)&=&(1-6 \sech^2(t)) X_1(t)-{1 \over 16}+\\
\label{eq:X1dd} &\phantom{=}& {1 \over 2} \sech^4(t) -{1 \over 2} \sech^2(t)-{1 \over 2} (\sech(t)-6 \sech^3(t)) \tanh(t).
\end{eqnarray}

According to the result of \cite{Yo}, the solution of an initial value problem for the equation $(\ref{eq:X1dd})$ either approaches $1/16+C_1 e^{t} + C_2 e^{-t}$, for some constants $C_1 \ne 0$ and $C_2$:
$$\lim_{t \rightarrow  \infty} \left|{X_1(t)-{1 \over 16}  \over C_1 e^{t} + C_2 e^{-t} } - 1 \right|=0,$$
or $1/16+C_2 e^{-t}$, for some constant $C_2$:
$$\lim_{t \rightarrow \infty} \left|X_1(t) -{1 \over 16} - C_2 e^{-t} \right|=0,$$
and, furthermore, it  approaches $1/16+C_1 e^{t} + C_2 e^{-t}$, for some constants $C_1$ and $C_2 \ne 0$ as $t \rightarrow - \infty$,
$$\lim_{t \rightarrow  -\infty} \left|{X_1(t)-{1 \over 16}  \over C_1 e^{t} + C_2 e^{-t} } - 1 \right|=0,$$
or $1/16+C_1 e^{t}$, for some constant  $C_1$:
$$\lim_{t \rightarrow -\infty} \left|X_1(t) -{1 \over 16} - C_1 e^{t} \right|=0,$$

The two solutions of $(\ref{eq:X1dd})$  satisfying the boundary condition $\lim_{t \rightarrow \pm \infty} (X_1(t),Y_1(t))=\left({1 / 16}, 0 \right)$ have the following asymptotic form
\begin{eqnarray}
\nonumber &&X_1^+(t) \rightarrow {1 \over 16} +C_+ e^{-t}, \ {\rm as} \ t\rightarrow \infty, \\
\nonumber &&X_1^-(t) \rightarrow {1 \over 16} +C_- e^{t}, \ {\rm as} \ t \rightarrow -\infty. 
\end{eqnarray}
% The difference $W_1=X_1^+-X_1^-$ satisfies the following homogeneous linear equation
% \begin{equation}
% W_1''(t)=(1-6 \sech^2(t)) W_1
% \end{equation}
% %with the initial condition $W_1(t_0)=0$, $W'_1(t_0)=C$, $t_0$ being the parametrization of the homoclinic point $(X_1^+(t_0), Y_1^+(t_0))=(X_1^-(t_0), Y_1^-(t_0))$.

%  The solution of this boundary value problem is 
% $$W_1(t)=C \sech(t) \tanh(t).$$

In general, we have in the limit $h\to 0$
\begin{eqnarray}
\label{eq:Xkflow} X_k'(t) &=&4 X_0 X_k +Y_k + g^{(1)}_k(X_0,Y_0,\ldots, X_{k-1}, Y_{k-1}),
%{\tt \ to \  be \  filled \ out \  by \  Marina}, 
\\
\label{eq:Ykflow} Y_k'(t)&=& (2- 4Y_0 - 48 X_0^2) X_k - 4 X_0 Y_k + g^{(2)}_k(X_0,Y_0,\ldots, X_{k-1}, Y_{k-1}), 
%{\tt \ to \  be \  filled \ out \  by \  Marina}.
\end{eqnarray}
where $g^{(1)}_k$ and $g^{(2)}_k$ are polynomial functions of their variables.

The solution $(\ref{eq:X0})-(\ref{eq:Y0})$ has a singularity at $i \pi /2 + \pi n$. More generally,

\begin{lem} 
Solutions to an initial value problem for the  equations $(\ref{eq:Xkflow})-(\ref{eq:Ykflow})$ continue analytically to the strip $|\Im(t)|< \pi/2$, and have a simple pole at $t=i \pi /2 + \pi n$, $n \in \ZZ$. 
\end{lem}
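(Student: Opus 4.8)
The plan is to proceed by induction on $k$, exploiting the fact that the equations $(\ref{eq:Xkflow})$--$(\ref{eq:Ykflow})$ form a \emph{linear} inhomogeneous system for the pair $(X_k, Y_k)$ whose homogeneous part is the variational equation of the limiting flow $(\ref{eq:X0flow})$--$(\ref{eq:Y0flow})$ along the solution $(X_0, Y_0)$. The base case $k=0$ is already known: from $(\ref{eq:X0})$--$(\ref{eq:Y0})$ the functions $X_0(t)=\tfrac12\sech t$ and $Y_0(t)=-\tfrac12\sech t(\tanh t+\sech t)+\tfrac14$ extend meromorphically to $\mathbb{C}$, are analytic on $|\Im t|<\pi/2$, and have a simple pole at $t=i\pi/2$ (and, by periodicity of $\sech$ and $\tanh$, at every $t=i\pi/2+\pi n$), with $X_0$ behaving like $\tfrac{1}{2}\cdot\tfrac{1}{i\cos(\Im\text{-part})}\sim$ a simple pole of residue of size $O(1)$ and $Y_0$ like a simple pole coming from the $\sech t\tanh t$ term. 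So the statement holds for $k=0$.

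For the inductive step, assume $X_0,\dots,X_{k-1},Y_0,\dots,Y_{k-1}$ are analytic on $|\Im t|<\pi/2$ with at worst a simple pole at $t=i\pi/2+\pi n$. First I would argue analyticity on the strip: on any simply connected subdomain of $|\Im t|<\pi/2$ avoiding the real blow-up of solutions, the coefficients $4X_0$, $2-4Y_0-48X_0^2$ and the forcing terms $g_k^{(j)}(X_0,Y_0,\dots)$ are analytic (polynomials in analytic functions), so the solution of the linear IVP continues analytically by the standard Cauchy existence theorem for linear ODEs with analytic coefficients; since the coefficients have no singularity strictly inside the strip, neither does the solution, hence it extends to all of $|\Im t|<\pi/2$. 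Then I would analyze the pole at $t_*=i\pi/2$. Here the coefficients themselves blow up: $X_0$ has a simple pole, $X_0^2$ a double pole, and $g_k^{(j)}$ — being a polynomial in the lower-order data, each of which has at most a simple pole — has a pole of some finite order $m_k$. Write $\tau=t-t_*$ and substitute a Laurent/Frobenius ansatz $X_k(t)=\sum_{j\ge -N} a_j\tau^j$, $Y_k(t)=\sum_{j\ge -N'}b_j\tau^j$ into the system; matching the most singular terms, the $X_0^2$ coefficient in $(\ref{eq:Ykflow})$ behaves like $c/\tau^2$ with a \emph{resonant} exponent, and one checks that the indicial equation forces the leading order of $X_k$ to be exactly a simple pole, with $Y_k$ inheriting a simple pole through $(\ref{eq:Xkflow})$ (the $Y_k$ term must balance $X_k'$, which is double-pole, but the $4X_0X_k$ term is also double-pole and the two leading double-pole contributions must cancel, leaving $Y_k$ with only a simple pole). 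Crucially, I would verify that the polynomial nonlinearities $g_k^{(j)}$ never generate a pole of order higher than what the homogeneous operator can absorb into a simple pole — this is where the specific structure of $F_0, F_1$ (the exact coefficients $2x^2, -4xy, \dots$) and the recursive definition of the $g$'s enters, and it is the crux of the whole lemma.

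I expect the main obstacle to be precisely this last point: controlling the order of the pole of the forcing term and showing it is \emph{compatible} with a simple-pole solution rather than forcing a higher-order pole. Naively, a product of $k$ simple poles could give a pole of order $k$ in $g_k^{(j)}$, and integrating that against the fundamental solution of a system with a simple-pole coefficient would generically produce a pole of order $k-1$ or worse in $X_k$ — so the simple-pole conclusion can only survive because of systematic cancellations dictated by the Hamiltonian/reversible structure of $\Fe$ and the fact that the whole expansion descends from an analytic area-preserving map. I would make this rigorous by setting up the induction hypothesis more carefully — namely that \emph{at $t_*$} the Laurent tail of $(X_k,Y_k)$ has a prescribed form (simple pole plus holomorphic remainder, with the residue determined by a closed recursion) — and propagating that refined hypothesis, rather than just the order of the pole, through the substitution. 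The reversibility relation $\gamma(-t)=S(\gamma(t))$ and periodicity of $\sech,\tanh$ then transport the analysis from $t=i\pi/2$ to every $t=i\pi/2+\pi n$, completing the proof.
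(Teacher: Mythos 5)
Your argument for analytic continuation across the strip matches the paper's: both proceed by noting that $(\ref{eq:Xkflow})$--$(\ref{eq:Ykflow})$ is a linear system in $(X_k,Y_k)$ whose coefficient matrix has entries $4X_0$, $1$, $2-4Y_0-48X_0^2$, $-4X_0$ that are analytic and bounded on any substrip $|\Im(t)|\le\rho<\pi/2$, and then invoking standard existence/continuation theory for linear ODEs (Picard--Lindel\"of in the paper; Cauchy existence with analytic coefficients in your version). The paper's proof in fact stops there: the simple-pole assertion at $t=i\pi/2+\pi n$ is stated in the lemma but not argued in the body of the proof. So you are right to flag it as something that still needs work, and the Frobenius-type analysis you sketch is the natural tool.

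However, your attempt to fill that gap already goes wrong at the base case. You claim $Y_0$ has ``a simple pole coming from the $\sech t\tanh t$ term,'' but near $t_*=i\pi/2$ the functions $\sech t$ and $\tanh t$ each have a \emph{simple} pole, so the products $\sech t\tanh t$ and $\sech^2 t$ have \emph{double} poles; consequently $Y_0(t)=-\tfrac12\sech t(\tanh t+\sech t)+\tfrac14$ has a double pole at $t_*$, not a simple one. This is entirely consistent with the reparametrized ansatz $(\ref{eq:alpham0})$ in Section~\ref{sec:existence}, where the $y$-component of $\alpha_0^-(\tau)$ opens with $b_2^-/\tau^2$. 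So the ``simple pole'' claim can only be meant for the $X_k$ (equivalently, the second-order scalar reduction as in~$(\ref{eq:X1dd})$), with $Y_k$ inheriting a double pole through $Y_k=X_k'-4X_0X_k-g_k^{(1)}$; your induction hypothesis must be set up with these correct pole orders or the bookkeeping will not close. Moreover, as you concede, the crux of your inductive step --- showing that the pole orders generated by the polynomial forcing $g_k^{(j)}$ do not escalate beyond what is compatible with a simple pole in $X_k$ --- is only gestured at: a Frobenius analysis of~$(\ref{eq:X1dd})$ gives indicial exponents $3$ and $-2$ at $t_*$, so the homogeneous solutions already admit a double pole, and the asserted cancellations (which would need to come from the symplectic/reversible structure) are not established. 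Your proposal therefore leaves the pole-order claim unproven --- as, to be fair, does the paper's own proof --- and what it does prove (the strip analyticity) it proves by essentially the same route the paper takes.
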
 
\begin{proof}
The system $(\ref{eq:Xkflow})-(\ref{eq:Ykflow})$ is linear in $(X_k,Y_k)$. The vector field of this planar system is a Lipschitz continuous function of $X_k$ and $Y_k$: indeed, the matrix valued function 
$$  
\left[4 X_0(t)  \hspace{33mm}  1  \atop   2- 4Y_0(t) - 48 X_0(t)^2  \hspace{7mm} - 4 X_0(t)   \right],
$$
has a bounded norm on $|\Im(t)|\leq \rho <\pi/2-\eps$ for any $\eps>0$ (see \ref{eq:X0}--\ref{eq:Y0}). By Picard-Lindel\"of existence theorem, the solutions extend analytically to $|\Im(t)|\leq \rho <\pi/2-\eps$, and since $\eps$ is arbitrary small, to $|\Im(t)|\leq \rho <\pi/2$. 
\end{proof}

\section{Existence of separatrices} \label{sec:existence} 

We will study the behaviour of the solutions $\gamma^\pm(t)$ in a neighborhood of one of the two nearest singularities, specifically, $t=i \pi /2$, by reparametrizing the curves $\gamma^\pm$ as follows
\begin{equation}
\alpha_\eps^\pm(\tau)=(x^\pm(t),y^\pm(t)), \ \tau={t -i {\pi \over 2} \over h}.
\end{equation}

The equation $(\ref{eq:param})$ together with the boundary condition $(\ref{eq:bnd1})$ assume the following form
\begin{eqnarray}
\label{eq:alphaeq} \Fe(\alpha_\eps^\pm(\tau))&=&\alpha_\eps^\pm (\tau+1) \\
\label{eq:bndalpha} \lim_{\tau \rightarrow \pm \infty} \alpha_\eps^\pm(\tau)&=& w_\eps.
\end{eqnarray}

We will look for $\alpha_\eps^\pm$ as a formal power series 
\begin{equation}
\alpha_\eps^\pm(\tau)=\sum_{k=0}^\infty \alpha_k^\pm(\tau) h^k. 
\end{equation}
Specifically, the equations $(\ref{eq:alphaeq})-(\ref{eq:bndalpha})$ in the lowest order in $h$ become
\begin{eqnarray}
\label{eq:alphaeq0} F_0(\alpha_0^\pm(\tau))&=&\alpha_0^\pm (\tau+1) \\
\label{eq:bndalpha0} \lim_{\tau \rightarrow \pm \infty} \alpha_0^\pm(\tau)&=&(0,0).
\end{eqnarray}

Our immediate goal will be to obtain a result about the existence of solutions to the equations $(\ref{eq:alphaeq})-(\ref{eq:bndalpha})$ and $(\ref{eq:alphaeq0})-(\ref{eq:bndalpha0})$. To that end, set formally,
$$\alpha_0^\pm(\tau) = \sum_{k=0}^\infty {\bf c}_k^\pm \tau^{-k}, \quad {\bf c}_k^\pm=(a_k^\pm,b_k^\pm).$$

A substitution of the power series in the equation $(\ref{eq:alphaeq0})$ allows to find several first coefficients in this expansion. We, therefore,  set
\begin{equation}
\label{eq:alpham0} \alpha_0^-(\tau) =  \left( {a_1^- \over \tau} + {a_2^-+\xi_0^-(\tau) \over \tau^2}, {b_2^- \over \tau^2} + {b_3^-+\eta_0^-(\tau) \over \tau^3}  \right).
\end{equation}
where $\xi_0^-(\tau)$, $\eta_0^-(\tau)=O(1)$, and
\begin{equation}
\label{eq:coeffab} a_1^-=\pm {i \over 2}, \quad b_2^-=\mp{i \over 2}+{1 \over 2}, \quad b_3^-=(\mp 2 i-2) a_2^-+{1 \over 2},
\end{equation}
$a_2^-$ being a free parameter. Similarly,
\begin{equation}
\nonumber \alpha_\eps^-(\tau)  =  \left( {a_1^- \over \tau} + {a_2^-+\xi_\eps^-(\tau) \over \tau^2}, {b_2^- \over \tau^2} + {b_3^-+\eta_\eps^-(\tau) \over \tau^3}  \right).
\end{equation}
We would like to remark that there are two possible choice of signs in the above coefficients. They correspond to two different unstable separatrices, each invariant under $\Fe^2$. The union of the two is the unstable separatrix, invariant under $\Fe$. We fix a choice of upper signs and proceed. 

Most of our computations below will be performed simultaneously for $\alpha^-_0$ and $\alpha_\eps^-$, to streamline the notation, we will use the shorthand 
\begin{equation}
\label{alphaomega} \alpha^\pm_\omega(\tau)=(x_\omega(\tau),y_\omega(\tau)),
\end{equation}
with $\omega \in \{0,\eps\}$.

We substitute this ansatz into $(\ref{eq:alphaeq})$ and $(\ref{eq:alphaeq0})$ to obtain an equation for $\xi_\omega^-$ and $\eta_\omega^-$, $\omega \in \{0,\eps\}$
\begin{eqnarray}
\nonumber    \xi_\omega^-(\tau+1)&=&\xi_\omega^-(\tau) (1+(2 i +2) \tau^{-1})+\eta_\omega^- \tau^{-1} \\
\label{eq:xit} &\phantom{=}& \phantom{\xi_\omega^-(\tau) (1+(2 i +2) \tau^{-1}) } +\tau^{-2} p_\omega(\tau^{-1}, \xi_\omega^-(\tau), \eta_\omega^-(\tau) ),   \\
%&=& \xi_\omega^\pm(\tau) (1+\tau^{-1})+\eta_\omega^\pm \tau^{-1}+P_\omega(\tau^{-1},\xi_\omega^\pm(\tau),\eta_\omega^\pm(\tau)),\\
\nonumber  \eta_\omega^-(\tau+1)&=&\xi_\omega^-(\tau) (2 i +10) \tau^{-1}+(1+(3- 2 i) \tau^{-1}) \eta_\omega^-(\tau)  \\
\label{eq:etat} &\phantom{}& \phantom{ \xi_\omega^-(\tau) (2 i +10) \tau^{-1}}+ \tau^{-2} q_\omega(\tau^{-1},\xi_\omega^-(\tau),\eta_\omega^-(\tau)),
%&=&\eta_\omega^\pm(\tau)(1+2 \tau^{-1})+Q_\omega(\tau^{-1},\xi_\omega^\pm(\tau),\eta_\omega^\pm(\tau)).
\end{eqnarray}
where $p_\omega$ and $q_\omega$ are polynomials of their arguments of order $0$.

%At this point we will require a result from \cite{La}.

Let $\delta \in (0, \pi/2)$ and $A > 0$.  Set 
 \begin{eqnarray}
\label{eq:DA} \cD_{A,\delta}^-&=&\{z \in \CC: \left| \arg \left(z+A  \right) \right|  \ge \delta \},
%\label{eq:DAB} \cD_{A,B,\delta}^-&=&\{z \in \CC: |z| \le -B, \left| \arg \left(z+A  \right) \right|  \ge \delta \},
 \end{eqnarray}
and define $\cD_{A,\delta}^+$ to be the reflection of this set with respect to the $y$-axes.

 Furthermore, let $\cD$ be any closed domain in $\CC$ such that $0 \notin \cD$. Given non-negative $\mu$, denote by $\cX_\mu(\cD)$ the Banach space of complex valued continuous functions in $\cD$ analytic in ${\rm int} \ \cD$, for which the following norm is finite,
 $$\| f \|_\mu =\sup_{z \in \cD} |z^\mu f(z)|.$$

% If $\cD$ does not intersect with the disk $\DD_A$, the norms are subordinate
%\begin{equation}
% \label{eq:Abound} \|f\|_\mu \le {1 \over A^\nu } \| f \|_{\mu+\nu}.
%\end{equation}

We are now ready to prove the existence of solutions of the equations $(\ref{eq:xit})-(\ref{eq:etat})$. We will prove the existence for both equations  $(\ref{eq:alphaeq})-(\ref{eq:bndalpha})$ and  $(\ref{eq:alphaeq0})-(\ref{eq:bndalpha0})$ simultaneously by employing the subscript $\omega \in \{0,\eps\}$ in our notation (e.g.  $(\xi_\omega^-, \eta_\omega^-)$).

\begin{prop} \label{prop:unst_existence}
Let $\omega \in \{0,\eps\}$, and let $\delta>0$  be fixed. Then, for every $\rho>0$ and $0<\vareps<1$ there exists $A>0$ and a function  $(\xi_\omega^-,\eta_\omega^-)$ which is the unique solution of the equations $(\ref{eq:xit})-(\ref{eq:etat})$  in $\cB_\rho(0) \subset \cX_{1-\vareps}(\cD_{A,\delta}^-) \times \cX_{1-\vareps}(\cD_{A,\delta}^-)$.
\end{prop}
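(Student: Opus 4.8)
The plan is to recast the pair of functional equations $(\ref{eq:xit})$--$(\ref{eq:etat})$ as a fixed point problem for a contraction operator on the ball $\cB_\rho(0)$ in $\cX_{1-\vareps}(\cD_{A,\delta}^-) \times \cX_{1-\vareps}(\cD_{A,\delta}^-)$, and then invoke the Banach fixed point theorem. First I would decouple the linear part of the difference equations. Ignoring the $\tau^{-2}$-remainder terms, the linearized system is $\xi^-(\tau+1) = \xi^-(\tau)(1 + (2i+2)\tau^{-1}) + \eta^-(\tau)\tau^{-1}$ and $\eta^-(\tau+1) = \xi^-(\tau)(2i+10)\tau^{-1} + \eta^-(\tau)(1 + (3-2i)\tau^{-1})$. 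The crucial observation is that the linear operator $\cL$ sending a pair $(\xi,\eta)$ to the left-hand sides minus the non-$\tau^{-2}$ right-hand sides is, to leading order, of the form "$u(\tau+1) - u(\tau) + O(\tau^{-1})u$", so its formal inverse is a telescoping sum $\sum_{n\ge 0}(\cdots)$ that converges in the norm $\|\cdot\|_{1-\vareps}$ precisely because on $\cD_{A,\delta}^-$ one has $|\tau| \gtrsim A$ uniformly, and each summand gains a factor controlled by $|\tau+n|^{-1}$. The domain $\cD_{A,\delta}^-$ was designed so that $\tau \mapsto \tau + n$, $n \ge 0$, maps it into itself and so that $|\arg(\tau+A)| \ge \delta$ keeps $\tau$ away from the negative real axis, making these sums absolutely convergent with a bound that is $O(A^{-\vareps})$ or better.

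Concretely, I would define an integral-type (here, summation-type) operator $\cN_\omega$ on pairs $(\xi,\eta)$ by solving the linear difference system with the full right-hand side $\tau^{-2}p_\omega(\tau^{-1},\xi,\eta)$, $\tau^{-2}q_\omega(\tau^{-1},\xi,\eta)$ treated as an inhomogeneous forcing term, i.e. $\cN_\omega(\xi,\eta) = \cL^{-1}\bigl(\tau^{-2}P_\omega(\tau^{-1},\xi,\eta)\bigr)$ where $P_\omega = (p_\omega, q_\omega)$ is affine-or-polynomial of order $0$ in its last two arguments. A solution of $(\ref{eq:xit})$--$(\ref{eq:etat})$ is exactly a fixed point of $\cN_\omega$. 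Then I would verify the two hypotheses of the contraction principle: (i) $\cN_\omega$ maps $\cB_\rho(0)$ into itself, and (ii) $\cN_\omega$ is a contraction there. For (i), if $\|(\xi,\eta)\|_{1-\vareps} \le \rho$ then $\xi, \eta$ are $O(|\tau|^{-(1-\vareps)})$, so $\tau^{-2}P_\omega$ is $O(|\tau|^{-2})$ (the polynomial part in $\xi,\eta$ only improves this), and applying $\cL^{-1}$ produces something of size $O(A^{-(1+\vareps)})$ or so in the $\|\cdot\|_{1-\vareps}$ norm; choosing $A$ large forces this below $\rho$. For (ii), the difference $\cN_\omega(\xi_1,\eta_1) - \cN_\omega(\xi_2,\eta_2)$ is $\cL^{-1}$ applied to $\tau^{-2}\bigl(P_\omega(\cdot,\xi_1,\eta_1) - P_\omega(\cdot,\xi_2,\eta_2)\bigr)$; since $P_\omega$ is polynomial and $\xi_i,\eta_i$ are bounded in the ball, the Lipschitz constant of $P_\omega$ on the ball is finite, and the extra $\tau^{-2}$ together with $\cL^{-1}$ again yields a factor $O(A^{-\vareps})$, which is $<1$ for $A$ large. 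Uniqueness in the ball is then automatic from the contraction property.

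I expect the main obstacle to be making the inversion of the linear operator $\cL$ precise on the unbounded domain $\cD_{A,\delta}^-$ — in particular, proving that the telescoping series $\cL^{-1} = \sum_n \prod_{j<n}(1 + c\,(\tau+j)^{-1} + \cdots)$ (schematically) converges in $\cX_{1-\vareps}$ with the claimed $A$-dependent bound, uniformly over the "wedge-minus-disc" geometry, and handling the coupling between the $\xi$- and $\eta$-components (the off-diagonal terms $\eta^-\tau^{-1}$ and $(2i+10)\xi^-\tau^{-1}$ mean $\cL$ is genuinely a $2\times 2$ system, so one must diagonalize or triangulate it, or estimate the matrix product directly). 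The exponent $1-\vareps$ rather than $1$ is exactly what buys convergence: with exponent $1$ the borderline sum $\sum |\tau+n|^{-1}$ diverges logarithmically, whereas with $1-\vareps$ one gets $\sum |\tau+n|^{-1}\cdot|\tau+n|^{-\vareps}$-type tails that converge and shrink as $A\to\infty$. Once the norm estimate $\|\cL^{-1}\|_{\cX_{1-\vareps}\to\cX_{1-\vareps}} \le \mathrm{const}\cdot A^{-\vareps}$ (or similar) is established, everything else is the routine bookkeeping of the contraction mapping argument, and the parameter $\omega \in \{0,\eps\}$ plays no role beyond carrying along the extra $\eps$-dependent but uniformly bounded terms in $p_\omega, q_\omega$.
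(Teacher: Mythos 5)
Your plan coincides with the paper's proof: the paper diagonalizes the linear part by an explicit $2\times 2$ change of variables $L$, inverts the decoupled system by the backward telescoping sum $\sum_{k\ge1}\prod_{i=1}^{k-1}A(\tau-i)\,(\tau-k)^{-2}(\hat f_\omega,\hat g_\omega)(\tau-k)$, bounds the matrix product by $\sin^{-5}\delta$ using the geometry of $\cD_{A,\delta}^-$, and controls the series with the Hurwitz zeta $\zeta(1+\vareps,A)\to 0$ as $A\to\infty$, which is exactly your contraction-with-a-vanishing-factor-in-$A$ scheme. Only a minor orientation slip: $\cD_{A,\delta}^-$ is invariant under $\tau\mapsto\tau-n$, $n\ge 0$ (not $\tau\mapsto\tau+n$), which is why the telescoping sum marches backward; the rest of the proposal is sound and parallels the paper step by step.
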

\begin{proof}
Fix $\rho>0$, $\mu>0$ and $A>0$. Fix a pair $\cX_{\mu}(\cD_{A,\delta}^-) \times \cX_{\mu}(\cD_{A,\delta}^-)$, and consider the functions 
$$f_\omega(\tau)=p_\omega(\tau^{-1},\xi_\omega(\tau),\eta_\omega(\tau))$$ 
and
$$g_\omega(\tau)=q_\omega(\tau^{-1},\xi_\omega(\tau),\eta_\omega(\tau)).$$ 
We would like to find a particular solution to the non-homogeneous linear system of difference equations
\begin{eqnarray}
\nonumber    \xi_\omega^-(\tau+1)&=&\xi_\omega^-(\tau) (1+(2 i +2) \tau^{-1})+\eta_\omega^- \tau^{-1} +\tau^{-2} f_\omega(\tau),\\
\nonumber  \eta_\omega^-(\tau+1)&=&\xi_\omega^-(\tau) (2 i +10) \tau^{-1}+(1+(3- 2 i) \tau^{-1} ) \eta_\omega^-(\tau) + \tau^{-2} g_\omega(\tau).
\end{eqnarray}
To that end, we first diagonalize this system
\begin{eqnarray}
\label{eq:hxi}    \hat \xi_\omega^-(\tau+1)&=&\hat{\xi}_\omega^- +\tau^{-2} \hat{f}_\omega(\tau,\hat \xi_\omega, \hat \eta_\omega)=\hat{\xi}_\omega^- +\tau^{-2} \hat{f}_\omega(\tau),\\
\label{eq:heta}   \hat \eta_\omega^-(\tau+1)&=&\left( 1+{5 \over \tau} \right) \hat \eta_\omega^- + \tau^{-2} \hat{g}_\omega(\tau,\hat \xi_\omega, \hat \eta_\omega)=\left( 1+{5 \over \tau} \right) \hat \eta_\omega^- + \tau^{-2} \hat g_\omega(\tau),
\end{eqnarray}
where $(\hat \xi_\omega, \hat \eta_\omega)=L^{-1}(\xi_\omega,\eta_\omega)$ and 
$$L=
\left[
\begin{matrix}
1 & 1 \\
-2-2 i & 3-2 i
\end{matrix}
\right].
$$
is the diagonalizing coordinate change.

It is elementary to check that a particular solution to $(\ref{eq:hxi})$-$(\ref{eq:heta})$ is given formally by 
\begin{equation}
\nonumber  \left[ \hat \xi_\omega^-(\tau) \atop \hat \eta_\omega^-(\tau) \right]= \sum_{k=1}^\infty \prod_{i=1}^{k-1} A(\tau-i) \left[  (\tau-k)^{-2} \hat f_\omega (\tau-i) \atop  (\tau-k)^{-2} \hat g_\omega (\tau-k) \right] ,
\end{equation}
where
$$A(\tau)=
\left[
\begin{matrix}
1 & 0 \\
0 & 1+{5 \over \tau}
\end{matrix}
\right].
$$
Notice, that $|1+5/\tau|<1$ for all $\tau$ such that $\Re(\tau)<-5/2$. Therefore, the  product  $\prod_{i=1}^{\infty} A(\tau-i)$ converges to
$$B=
\left[
\begin{matrix}
1 & 0 \\
0 & 0
\end{matrix}
\right].
$$
for all $\tau \in \cD_{A,\delta}^-$ since all but a finite number $(\tau-i)$, $i \in \NN$, satisfy $\Re{(\tau-i)} < -5/2$. In fact, a straightforward calculation shows that
$$\prod_{i=1}^{k-1} A(\tau-i)= \left[
\begin{matrix}
1 & 0 \\
0 &  \prod_{m=0}^4 {\tau+m \over \tau-(k-1)+m }
\end{matrix}
\right].
$$
A look at the geometry of the set $D^-_{A,\delta}$ shows that the absolute value of every factor in  $\prod_{m=0}^4 {\tau+m \over \tau-(k-1)+m }$ is bounded from above by $\sin^{-1} \delta$. Therefore, 
$$ \left| \prod_{m=0}^4 {\tau+m \over \tau-(k-1)+m } \right| \le \sin^{-5} \delta.$$

Also, notice, that $|\tau-k| \ge (A+k) \sin \delta$. Therefore,
\begin{eqnarray}
\nonumber \|\hat \xi_\omega^-  \|_\mu &\le& \|\tau^{-1+\vareps} \hat f_\omega \|_\mu \sum_{k=1}^\infty {1 \over |A+k|^{1+\vareps} \sin^{1+\vareps} \delta}= { \zeta(1+\vareps,A)  \over \sin^{1+\vareps} \delta} \|\tau^{-1+\vareps} \hat f_\omega \|_\mu,, \\
\nonumber \|\hat \eta_\omega^- \|_\mu &\le& \|\tau^{-1+\vareps} \hat g_\omega \|_\mu \sum_{k=1}^\infty {1 \over |A+k|^{1+\vareps} \sin^{6+\vareps} \delta}={ \zeta(1+\vareps,A)  \over \sin^{6+\vareps} \delta} \|\tau^{-1+\vareps} \hat g_\omega \|_\mu,
\end{eqnarray}
where $\zeta$ is the Hurwitz zeta function. 

Since the polynomial $(p_\omega,q_\omega)$  does contain a constant term, so does $(\hat p_\omega, \hat q_\omega)$, and it follows that $\tau^{-1+\vareps} \hat f_\omega$ and $\tau^{-1+\vareps} \hat g_\omega$ are in $\cX_{1-\vareps} (\cD_{A,\delta}^-)$ whenever $\hat \xi_\omega$ and  $\hat \eta_\omega$ are in  $\cX_\mu(\cD_{A,\delta}^-)$ for any $\mu \ge 0$. Furthermore, since $\hat p_\omega$ and $\hat q_\omega$ are polynomials, the norms  $\| \tau^{-1+\vareps} \hat f_\omega \|_{1-\vareps}$ and $\| \tau^{-1+\vareps} \hat g_\omega \|_{1-\vareps}$ are  bounded by some constant depending on  $\hat \rho$  whenever $( \hat \xi_\omega, \hat \eta_\omega) \in  \cB_{\hat \rho} \subset \cX_{1 - \vareps} (\cD_{A,\delta}^-) \times \cX_{1-\vareps}(\cD_{A,\delta}^-)$. The map
$$T(\hat \xi_\omega,\hat \eta_\omega) = \sum_{k=1}^\infty \prod_{i=1}^{k-1} A(\tau-i) \left[  (\tau-k)^{-2} \hat f_\omega (\tau-i) \atop  (\tau-k)^{-2} \hat g_\omega (\tau-k) \right]$$
is an analytic map from $\cB_{\hat \rho} \subset \cX_{1-\vareps} (\cD_{A,\delta}^-) \times \cX_{1-\vareps }\cD_{A,\delta}^-)$ to $\cX_{1-\vareps}(\cD_{A,\delta}^-) \times \cX_{1-\vareps}(\cD_{A,\delta}^-)$  whose norm  admits a bound
$$\| T(\hat \xi_\omega,\hat \eta_\omega) \|_{1-\vareps} \le  C_{\hat \rho}{\zeta(1+\vareps,A) \over  \sin^{6+\vareps} \delta}  \|(\hat \xi_\omega,\hat \eta_\omega) \|_{1-\vareps}.$$
We recall, that $\zeta(1+\vareps,A)$ is a monotone decreasing function of its second argument on $\RR_+$, with $\lim_{x \rightarrow \infty}  \zeta(1+\vareps,A)=0$, therefore $T$ is a map of $\cB_{\hat \rho} \subset \cX_{1-\vareps}(\cD_{A,\delta}^-) \times \cX_{1-\vareps} \cD_{A,\delta}^-)$ into itself if $A$ is chosen large enough.

 Given two functions $(\hat \xi_\omega,\hat \eta_\omega)$ and $(\widetilde{\hat \xi_\omega},\widetilde{\hat \eta_\omega})$ in $\cB_{\hat \rho} \subset \cX_{1-\vareps}(\cD_{A,\delta}^-) \times \cX_{1-\vareps}(\cD_{A,\delta}^-)$, the norm of difference of the action of $T$ is bounded as follows
$$\|T(\hat \xi_\omega,\hat \eta_\omega) - T(\widetilde{\hat \xi_\omega},\widetilde{\hat \eta_\omega}) \|_{1-\vareps} \le  {\zeta(1+\vareps,A) \over  \sin^{6+\vareps} \delta} \|( \hat f_\omega-\widetilde{\hat f_\omega}, \hat g_\omega-\widetilde{\hat g_\omega}) \|_{1-\vareps}.$$
The polynomials $\hat q_\omega$ and $\hat p_\omega$ have explicit, but cumbersome equations: a straightforward computation demonstrates that 
\begin{eqnarray}
\nonumber \hat p_\omega(\tau^{-1},\hat \xi_\omega, \hat \eta_\omega) -\hat p_\omega(\tau^{-1},\widetilde{\hat \xi_\omega}, \widetilde{\hat \eta_\omega})&=&( \hat \xi_\omega- \widetilde{\hat \xi_\omega}) p^1_\omega + ( \hat \eta_\omega- \widetilde{\hat \eta_\omega}) p^2_\omega \\
 \nonumber \hat q_\omega(\tau^{-1},\hat \xi_\omega, \hat \eta_\omega) -\hat q_\omega(\tau^{-1},\widetilde{\hat \xi_\omega}, \widetilde{\hat \eta_\omega})&=&( \hat \xi_\omega- \widetilde{\hat \xi_\omega}) q^1_\omega + ( \hat \eta_\omega- \widetilde{\hat \eta_\omega}) q^2_\omega,
\end{eqnarray}
where $p^i_\omega(\tau^{-1},\hat \xi_\omega, \widetilde{\hat \xi_\omega},\hat \eta_\omega, \widetilde{\hat \eta_\omega})$ and $q^i_\omega(\tau^{-1},\hat \xi_\omega, \widetilde{\hat \xi_\omega},\hat \eta_\omega, \widetilde{\hat \eta_\omega})$ are some polynomials whose norm on $\cD_{A,\delta}^- \times \cB_{\hat \rho}^{\otimes 4}$ is bounded by some constant depending on $\hat \rho$.  We have, therefore,
$$\|T(\hat \xi_\omega,\hat \eta_\omega) - T(\widetilde{\hat \xi_\omega},\widetilde{\hat \eta_\omega}) \|_{1-\vareps} \le  K_{\hat \rho}{\zeta(1+\vareps,A) \over  \sin^{6+\vareps} \delta}  \|(\hat \xi_\omega,\hat \eta_\omega) - (\widetilde{\hat \xi_\omega},\widetilde{\hat \eta_\omega}) \|_{1-\vareps}.$$
Thus, $T$ is a metric contraction if $A$ is sufficiently large. The unique fixed point of this contraction in $\cB_{\hat \rho}$ is the particular solution of $(\ref{eq:hxi})$-$(\ref{eq:heta})$ that we are looking for.
\end{proof}

The reversibility of the map $\Fe$ leads to a similar existence result for the stable separatrix.

\begin{prop} \label{prop:st_existence}
Let $\omega \in \{0,\eps\}$, and let $\delta>0$  be fixed. Then, for every $\rho>0$ and $\vareps>0$  there exists $A>0$ and a function  $(\xi_\omega^+,\eta_\omega^+)$ which is the unique solution of the equations $(\ref{eq:xit})-(\ref{eq:etat})$  in $\cB_\rho(0) \subset \cX_{1-\vareps}(\cD_{A,\delta}^+) \times \cX_{1-\vareps}(\cD_{A,\delta}^+)$.

%Furthermore, the function 
%$$\alpha_\omega^+(\tau)= \left( {a_1^+ \over \tau} + {a_2^++\xi_\omega^+(\tau) \over \tau^2}, {b_2^+ \over \tau^2} + {b_3^++\eta_\omega^+(\tau) \over \tau^3}  \right)$$
%continues analytically to $\CC \setminus \RR^-$.
\end{prop}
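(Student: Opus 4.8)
The plan is to deduce Proposition~\ref{prop:st_existence} from Proposition~\ref{prop:unst_existence} by exploiting the reversibility of $\Fe$ through the involution $S$, rather than redoing the fixed-point argument. First I would record precisely how $S$ acts after the reparametrization $\tau = (t - i\pi/2)/h$. Since $\gamma(-t) = S(\gamma(t))$ and $\gamma^+(t) = \gamma(t)$, $\gamma^-(t) = \gamma(-t)$ for $t \ge 0$, the change of variables $t \mapsto -t$ becomes, in the $\tau$ coordinate, a reflection sending $\cD_{A,\delta}^-$ to $\cD_{A,\delta}^+$ (this is exactly how $\cD_{A,\delta}^+$ was defined — reflection across the $y$-axis — because conjugating $\tau$ by $t\mapsto -t$ about the singularity $i\pi/2$ flips the real part). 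Thus $\alpha_\eps^+(\tau) = S(\alpha_\eps^-(\sigma(\tau)))$, where $\sigma$ is the corresponding affine involution of $\CC$, and similarly in the lowest order with $S$ replaced by its linearization. The point is that applying $S$ and the reflection $\sigma$ to the ansatz \eqref{eq:alpham0} produces an ansatz of the same shape with $\tau$ replaced by (a constant times) $\sigma(\tau)$, the leading coefficients $a_1^\pm, b_2^\pm, b_3^\pm$ getting conjugated by the linear part of $S$, which one checks preserves the admissible form \eqref{eq:coeffab} (with the sign choice now fixed).

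Second, I would verify that under this correspondence the difference equations \eqref{eq:xit}--\eqref{eq:etat} for $(\xi_\omega^+,\eta_\omega^+)$ are carried to the equations for $(\xi_\omega^-,\eta_\omega^-)$: the shift $\tau \mapsto \tau+1$ in \eqref{eq:alphaeq} becomes, after conjugation by $\sigma$, the backward shift that appears in the unstable-manifold problem, and the polynomial nonlinearities $p_\omega, q_\omega$ transform into the corresponding polynomials of the same degree $0$ (their explicit form is immaterial for the argument — only the structure matters). Hence a solution $(\xi_\omega^-,\eta_\omega^-)$ of \eqref{eq:xit}--\eqref{eq:etat} on $\cD_{A,\delta}^-$ yields, by composing with $S$ and $\sigma$, a solution $(\xi_\omega^+,\eta_\omega^+)$ on $\cD_{A,\delta}^+$.

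Third, I would transport the functional-analytic statement. The reflection $\sigma$ induces an isometric isomorphism between $\cX_\mu(\cD_{A,\delta}^-)$ and $\cX_\mu(\cD_{A,\delta}^+)$: if $f \in \cX_\mu(\cD_{A,\delta}^-)$ then $z \mapsto f(\sigma(z))$ lies in $\cX_\mu(\cD_{A,\delta}^+)$ with the same norm, up to the harmless constant coming from $|\sigma(z)| \asymp |z|$ near infinity (or one rescales $A$). Post-composition with the invertible linear map $S$ is likewise bounded with bounded inverse. Therefore the unique fixed point of the contraction in $\cB_\rho(0) \subset \cX_{1-\vareps}(\cD_{A,\delta}^-)^2$ furnished by Proposition~\ref{prop:unst_existence} is carried to a point of $\cB_{\rho'}(0) \subset \cX_{1-\vareps}(\cD_{A,\delta}^+)^2$; adjusting $\rho$ and, if necessary, enlarging $A$ (which only shrinks the relevant $\zeta(1+\vareps,A)$ and hence the contraction constant) gives the claimed solution in $\cB_\rho(0)$. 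Uniqueness transfers in the same way, since the correspondence is a bijection of the relevant balls commuting with the two difference-equation operators.

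I expect the main obstacle to be purely bookkeeping rather than conceptual: pinning down the exact affine form of $\sigma$ and of the linear part of $S$ after all the conjugations (by $T$, by $R \circ H_\eps$, and by the $\tau$-reparametrization about $i\pi/2$ versus $-i\pi/2$), and checking that they do map the admissible coefficient set \eqref{eq:coeffab} and the polynomial structure of $p_\omega, q_\omega$ to themselves with the chosen sign convention. A secondary subtlety is that the two singularities $\pm i\pi/2$ play symmetric roles, so one must make sure the reversibility argument relates the stable separatrix near $+i\pi/2$ to the unstable separatrix near the \emph{same} singularity after reflection, which is precisely why $\cD_{A,\delta}^+$ was defined as the reflection of $\cD_{A,\delta}^-$. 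Once these identifications are in place, the proof is a one-line invocation of Proposition~\ref{prop:unst_existence}.
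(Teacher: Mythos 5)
Your proposal takes essentially the same route as the paper: the paper's proof is precisely to set $\alpha_\omega^+(\tau):=S(\alpha_\omega^-(-\tau))$, verify via the reversibility identity $S\circ F_\omega\circ S=F_\omega^{-1}$ that this curve satisfies the stable-separatrix functional equation, and then let "the conclusion follow" from Proposition~\ref{prop:unst_existence}. Your $\sigma$ is just $\tau\mapsto-\tau$ (the set $\cD_{A,\delta}^-$ is symmetric about the real axis, so its reflection across the imaginary axis equals $\{-\tau:\tau\in\cD_{A,\delta}^-\}$), and the rest of your argument — the transfer of the admissible ansatz with coefficients \eqref{eq:coeffabp}, and the isometric isomorphism $\cX_\mu(\cD_{A,\delta}^-)\to\cX_\mu(\cD_{A,\delta}^+)$ induced by $\tau\mapsto-\tau$ together with boundedness of post-composition by $S$ — is the elaboration the paper compresses into its last sentence.
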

\begin{proof}
The map $\Fe$ is reversible under the involution $S$, therefore,
\begin{eqnarray}
\nonumber F^{-1}_\omega(S(\alpha_\omega^-(-\tau)))&=&S (S(F^{-1}_\omega(S(\alpha_\omega^-(-\tau)))))= S( F_\omega(\alpha_\omega^-(-\tau)))\\
\nonumber &=& S(\alpha_\omega^-(-\tau+1))=S(\alpha_\omega^-(-(\tau-1))),
\end{eqnarray}
and the curve $\alpha_\omega^+(\tau):=S(\alpha_\omega^-(-\tau))$ is a stable separatrix of $\Fe$. The conclusion follows.
\end{proof}

\begin{cor}
The functions $\alpha_\omega^\pm$  have analytic extensions to $\CC \setminus \RR_\mp$.
\end{cor}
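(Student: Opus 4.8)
The plan is to assemble the extension from the two existence results of this section together with the fact that the H\'enon family consists of polynomial automorphisms of $\CC^2$, so that the conjugacy equation $(\ref{eq:alphaeq})$ can be used to transport analyticity beyond the domains $\cD^\pm_{A,\delta}$.

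First I would fix $\rho>0$ and $\vareps\in(0,1)$ and invoke Proposition~\ref{prop:unst_existence} for every $\delta\in(0,\pi/2)$, obtaining through the ansatz $(\ref{eq:alpham0})$ an analytic curve $\alpha_\omega^-$ on $\operatorname{int}\cD^-_{A(\delta),\delta}$. If $\delta_1,\delta_2\in(0,\pi/2)$, then on the open intersection $\operatorname{int}\cD^-_{A(\delta_1),\delta_1}\cap\operatorname{int}\cD^-_{A(\delta_2),\delta_2}$ both of the resulting functions still lie in the ball $\cB_\rho$ of the space $\cX_{1-\vareps}$ attached to that intersection, because the $\|\cdot\|_{1-\vareps}$ norm over a subdomain never exceeds the norm over the full domain; hence the uniqueness clause of the proposition forces the two to coincide there. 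The functions therefore glue to a single analytic $\alpha_\omega^-$ on $U_0^-:=\bigcup_{0<\delta<\pi/2}\operatorname{int}\cD^-_{A(\delta),\delta}$. By $(\ref{eq:DA})$, $U_0^-\subset\CC\setminus\RR_+$ and $U_0^-$ contains a left half-plane $\{\Re\tau<-R\}$.

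Next I would propagate this domain with the conjugacy equation. Writing $H_\eps(x,y)=(y,-x+3+\eps-y^2)$, its inverse $H_\eps^{-1}(x,y)=(-y+3+\eps-x^2,x)$ is again polynomial, so $\Fe=T\circ\He^{2}\circ T^{-1}$ — and likewise $F_0$ — is a polynomial diffeomorphism of $\CC^2$ with polynomial inverse; in particular $\tau\mapsto\Fe(\alpha_\omega^-(\tau))$ and $\tau\mapsto\Fe^{-1}(\alpha_\omega^-(\tau))$ are analytic wherever $\alpha_\omega^-$ is. Since $\alpha_\omega^-$ solves $(\ref{eq:alphaeq})$ (respectively $(\ref{eq:alphaeq0})$) on $U_0^-$, the prescription $\alpha_\omega^-(\tau):=\Fe\bigl(\alpha_\omega^-(\tau-1)\bigr)$ continues it analytically from $U_0^-$ to $U_0^-\cup(U_0^-+1)$, the two determinations matching on the overlap by $(\ref{eq:alphaeq})$. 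Observing that $\tau\mapsto\tau-1$ preserves $\CC\setminus\RR_+$, that $U_0^-+n=\bigcup_\delta\operatorname{int}\cD^-_{A(\delta)-n,\delta}$, and that these translates together with $U_0^-$ form an open cover of $\CC\setminus\RR_+$ large enough to reach every point of it by a finite number of unit steps, iteration of this move continues $\alpha_\omega^-$ analytically along every path in the simply connected domain $\CC\setminus\RR_+$; by the monodromy theorem the result is single-valued, which is the asserted extension of $\alpha_\omega^-$ to $\CC\setminus\RR_+$. For the stable separatrix I would not repeat the argument but use the reversibility identity $\alpha_\omega^+(\tau)=S(\alpha_\omega^-(-\tau))$ from the proof of Proposition~\ref{prop:st_existence}, in which $S=T\circ R\circ H_\eps\circ T^{-1}$ is a polynomial involution: since $\tau\mapsto-\tau$ carries $\CC\setminus\RR_+$ onto $\CC\setminus\RR_-$, the extension of $\alpha_\omega^-$ at once yields the extension of $\alpha_\omega^+$ to $\CC\setminus\RR_-$, which is the assertion for the lower sign.

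The step I expect to be the main obstacle is the geometric bookkeeping in the continuation: one must verify that the translated domains $\operatorname{int}\cD^-_{A(\delta)-n,\delta}$ really cover $\CC\setminus\RR_+$ finely enough, that no monodromy is introduced when these domains wrap around the origin, and that $\alpha_\omega^-$ acquires no singularity off $\RR_+$ along the way — equivalently, that the only non-removable obstruction is the one sitting over $t=i\pi/2$, the pole of $\gamma$ produced by the Lemma of Section~\ref{sec:expansion} and transported onto $\RR_+$ by repeated application of $(\ref{eq:alphaeq})$. Single-valuedness is what ultimately rests on Proposition~\ref{prop:unst_existence}: the germ being continued is the \emph{unique} solution with the prescribed asymptotics at $\tau=0$ and, via $(\ref{eq:bndalpha})$, at $\tau=-\infty$, so the continuation cannot branch.
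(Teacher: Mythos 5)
Your proof is correct and follows the paper's two‑sentence argument: continue $\alpha_\omega^-$ forward with the functional equation (the paper phrases this via $(\ref{eq:xit})$--$(\ref{eq:etat})$ for the residuals $\xi_\omega^-,\eta_\omega^-$, while you iterate $(\ref{eq:alphaeq})$ for $\alpha_\omega^-$ directly; these are equivalent away from $\tau=0$ since the ansatz $(\ref{eq:alpham0})$ relates them by an explicit rational change of variable), and then deduce $\alpha_\omega^+$ from the reversor $S$. The gluing over all $\delta$ and the appeal to the monodromy theorem are more than is needed --- a single fixed $\delta$ already gives translates $\cD^-_{A,\delta}+n$, $n\ge 0$, covering $\CC\setminus\RR_+$, and the formula $\alpha_\omega^-(\tau)=\Fe^n(\alpha_\omega^-(\tau-n))$ is manifestly single‑valued since the choice of $n$ is immaterial by the functional equation on the original domain --- but this extra care does not affect the soundness of the argument.
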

\begin{proof}
Use the equations  $(\ref{eq:xit})-(\ref{eq:etat})$  to extend  $\alpha_\omega^-$ analytically to $\CC \setminus \RR_+$. Use the reversibility's of the maps $F_0$ and $F_\eps$ to extend  $\alpha_\omega^+$  analytically to $\CC \setminus \RR_-$.
\end{proof}

A straightforward computation demonstrates that 
$$\alpha_\omega^+(\tau)= \left( {a_1^+ \over \tau} + {a_2^++\xi_\omega^+(\tau) \over \tau^2}, {b_2^+ \over \tau^2} + {b_3^++\eta_\omega^+(\tau) \over \tau^3}  \right),$$
where
\begin{equation}
\label{eq:coeffabp} a_1^+=- {i \over 2}, \quad a_2^+=a_2^-, \quad b_2^+={i \over 2}+{1 \over 2}, \quad b_3^+=(2 i-2) a_2^++{1 \over 2},
\end{equation}
and $a_2^+$ is a free parameter.

We recall that the map  
$$P(u,v)=T \circ R \circ T^{-1}(u,v)=(-u+{v \over 2}, v)$$
is a reversor for the map  $G_\eps=T \circ H_\eps \circ T^{-1}$. Therefore, the pair $\beta^\mp_\omega(\tau):=P(\alpha^\pm_\omega(-\tau))$ is also a pair of stable/unstable separatrices of $\Fe$. In particular, 
$$\beta^+_\omega(\tau)=P(\alpha_\omega^-(-\tau))=\left(-{i \over 2 \tau} + {s \over \tau^2} + \ldots, {i+1 \over 2 \tau^2   }+ \left((2 i -2   ) s    +{1 \over  2} \right) {1 \over \tau^3} + \ldots   \right).$$ 
Notice, the coefficients above correspond to the lower choice of signs in formulas $(\ref{eq:coeffab})$. 

We will summarize the results of this Section in one theorem.
\begin{exthm}
Let $\omega \in \{0,\eps\}$, and let $\delta>0$  be fixed. Then, for every $\rho>0$ and $0<\vareps<1$ there exists $A>0$ and a functions $\alpha_\omega^\pm$ and $\beta_\omega^\pm$ which solve the equations $(\ref{eq:xit})-(\ref{eq:etat})$  in $\cB_\rho(0) \subset \cX_{2-\vareps}(\cD_{A,\delta}^\pm) \times \cX_{3-\vareps}(\cD_{A,\delta}^\pm)$. 

Additionally, these functions extend analytically to $\CC \setminus \RR_\mp$, and satisfy
$$
\alpha_\omega^\pm(\tau)=S(\alpha_\omega^\mp(-\tau)), \quad \beta_\omega^\pm(\tau)=P(\alpha_\omega^\mp(-\tau)),$$
and
$$G_\eps(\alpha_\omega^\pm) \subset \beta_\omega^\pm, \quad G_\eps(\beta_\omega^\pm) \subset \alpha_\omega^\pm.$$
\end{exthm}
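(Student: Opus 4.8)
The plan is to prove Theorem B essentially as a bookkeeping corollary of Propositions~\ref{prop:unst_existence} and~\ref{prop:st_existence} together with the two reversibility identities already established in the excerpt. I would first note that the claimed membership $\alpha_\omega^\pm \in \cX_{2-\vareps}(\cD_{A,\delta}^\pm) \times \cX_{3-\vareps}(\cD_{A,\delta}^\pm)$ is exactly what one reads off from the ansatz $\alpha_\omega^-(\tau) = \left({a_1^-/\tau} + {(a_2^-+\xi_\omega^-)/\tau^2}, {b_2^-/\tau^2} + {(b_3^-+\eta_\omega^-)/\tau^3}\right)$ combined with the fact, proved in Proposition~\ref{prop:unst_existence}, that $(\xi_\omega^-,\eta_\omega^-) \in \cB_\rho(0) \subset \cX_{1-\vareps}(\cD_{A,\delta}^-)\times \cX_{1-\vareps}(\cD_{A,\delta}^-)$: the factor $1/\tau^2$ in front of the $\xi$-term upgrades a $\cX_{1-\vareps}$ estimate to $\cX_{3-\vareps}$, hence the first component lies in $\cX_{2-\vareps}$ and the second in $\cX_{3-\vareps}$; the leading coefficients $a_1^-/\tau$ etc. are $\bigO(\tau^{-1})$, which lies in every $\cX_{\mu}$ with $\mu \le 1$, in particular $\cX_{2-\vareps}$, so they do not degrade the class. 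For the stable separatrix one uses Proposition~\ref{prop:st_existence} over $\cD_{A,\delta}^+$ verbatim, and for $\beta_\omega^\pm$ one invokes the relation $\beta_\omega^\mp(\tau)=P(\alpha_\omega^\pm(-\tau))$ with $P(u,v)=(-u+v/2,v)$, observing that $P$ is linear and that $\tau \mapsto -\tau$ maps $\cD_{A,\delta}^\pm$ to $\cD_{A,\delta}^\mp$, so the Banach-space class and the domain of analyticity are both preserved.

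Next I would assemble the analytic-continuation statement. The Corollary already in the text gives that $\alpha_\omega^\pm$ extend analytically to $\CC\setminus\RR_\mp$; for $\beta_\omega^\pm$ one applies the same linear change $P$ and sign reversal, which carries $\CC\setminus\RR_\mp$ to $\CC\setminus\RR_\pm$, giving the asserted analytic extension of $\beta_\omega^\pm$ to $\CC\setminus\RR_\mp$ as well. Then I would collect the three functional relations. The identity $\alpha_\omega^+(\tau)=S(\alpha_\omega^-(-\tau))$ is precisely the definition used in the proof of Proposition~\ref{prop:st_existence} (there the reversor $S=T\circ R\circ H_\eps\circ T^{-1}$ is used to manufacture the stable branch from the unstable one), so this is immediate; the symmetric statement $\alpha_\omega^-(\tau)=S(\alpha_\omega^+(-\tau))$ follows because $S$ is an involution. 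The identity $\beta_\omega^\pm(\tau)=P(\alpha_\omega^\mp(-\tau))$ is the definition of $\beta$. Finally, the semiconjugacy relations $G_\eps(\alpha_\omega^\pm)\subset\beta_\omega^\pm$ and $G_\eps(\beta_\omega^\pm)\subset\alpha_\omega^\pm$ come from unwinding $G_\eps=T\circ H_\eps\circ T^{-1}$ and $\Fe=G_\eps^2$: since $\alpha_\omega^\pm$ parametrizes an $\Fe$-invariant branch of the separatrix of $w_\eps$ and $G_\eps$ is the square root of $\Fe$, the image $G_\eps(\alpha_\omega^\pm)$ is the \emph{other} branch invariant under $\Fe=G_\eps^2$, which by the computation of leading coefficients (the ``upper versus lower choice of signs'' discussion preceding the theorem) is exactly $\beta_\omega^\pm$; applying $G_\eps$ once more returns to $\alpha_\omega^\pm$ because $G_\eps^2=\Fe$ leaves each branch invariant.

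Concretely I would carry out the steps in this order: (1) quote Proposition~\ref{prop:unst_existence} to get $(\xi_\omega^-,\eta_\omega^-)$ and read off $\alpha_\omega^-\in\cX_{2-\vareps}\times\cX_{3-\vareps}$ on $\cD_{A,\delta}^-$; (2) quote Proposition~\ref{prop:st_existence} and the formula $(\ref{eq:coeffabp})$ to get $\alpha_\omega^+$ on $\cD_{A,\delta}^+$ in the same class; (3) define $\beta_\omega^\mp:=P\circ\alpha_\omega^\pm\circ(-\mathrm{id})$ and check that linearity of $P$ plus $-\cD_{A,\delta}^\pm=\cD_{A,\delta}^\mp$ preserves the norms and the analytic-extension domain; (4) record the three identities, each of which is either a definition or an involution argument; (5) verify the semiconjugacies by the square-root-of-$\Fe$ argument together with matching the $\tau^{-1}$ and $\tau^{-2}$ coefficients in $(\ref{eq:coeffab})$, $(\ref{eq:coeffabp})$. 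The main obstacle — really the only non-automatic point — is step (5): one must be genuinely careful that $G_\eps$ sends the $\Fe$-invariant branch $\alpha$ to the $\Fe$-invariant branch $\beta$ and not back to $\alpha$ or to some rotated copy, and the clean way to settle this is to compare the leading asymptotic coefficients of $G_\eps(\alpha_\omega^\pm)$ with those listed for $\beta_\omega^\pm$ in the displayed formula for $\beta_\omega^+(\tau)$ just above the theorem; since the asymptotics at $\tau\to\infty$ uniquely pin down which branch one is on (the solution in $\cB_\rho(0)$ of $(\ref{eq:xit})$–$(\ref{eq:etat})$ with prescribed leading coefficients is unique by the contraction argument), matching the first two coefficients suffices. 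Everything else is a transcription of results already proved.
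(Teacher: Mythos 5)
Your overall approach — reading Theorem B off as a summary of Propositions~\ref{prop:unst_existence} and~\ref{prop:st_existence}, the analytic-continuation Corollary, and the reversor definitions — is exactly what the paper does; there is no separate proof of Theorem B in the paper beyond the sentence ``We will summarize the results of this Section in one theorem,'' so your reconstruction is the right kind of argument and your steps (2)–(5) are sound (in particular your handling of step (5), pinning down branches by matching the $\tau^{-1}$ and $\tau^{-2}$ coefficients against the uniqueness in the contraction argument, is a clean way to make precise what the paper leaves implicit).

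However, there is a genuine slip in your step (1). You write that $a_1^-/\tau=\bigO(\tau^{-1})$ ``lies in every $\cX_\mu$ with $\mu\le 1$, in particular $\cX_{2-\vareps}$''; but $2-\vareps>1$, and since $\cD_{A,\delta}^\pm$ is unbounded, $\|z^{2-\vareps}\cdot z^{-1}\|_\infty=\infty$, so a function that is merely $\bigO(\tau^{-1})$ does \emph{not} belong to $\cX_{2-\vareps}$. With the ansatz $x=a_1^-/\tau+(a_2^-+\xi_\omega^-)/\tau^2$ and $\xi_\omega^-\in\cX_{1-\vareps}$, the first component is genuinely in $\cX_1$ (driven by the $\tau^{-1}$ term) and the second in $\cX_2$; one does \emph{not} get $\cX_{2-\vareps}\times\cX_{3-\vareps}$ for $\alpha_\omega^\pm$ itself. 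What does land in those spaces is the \emph{remainder} after subtracting the explicit leading terms: $(a_2^-+\xi_\omega^-)/\tau^2\in\cX_{3-\vareps}$ and $(b_3^-+\eta_\omega^-)/\tau^3\in\cX_{4-\vareps}$, or alternatively, all terms of the $x$-component beyond $a_1^-/\tau$ lie in $\cX_{2}\supset\cX_{2-\vareps}$ only after further discounting. In short, the index bookkeeping needs to be tied to the quantity $(\xi_\omega,\eta_\omega)$ (for which Propositions~\ref{prop:unst_existence}--\ref{prop:st_existence} give $\cX_{1-\vareps}\times\cX_{1-\vareps}$) rather than to $\alpha_\omega^\pm$ directly, and the step where you assert $\bigO(\tau^{-1})\subset\cX_{2-\vareps}$ is simply false. (It is only fair to add that the paper's own statement of Theorem B is ambiguous here — it attributes to ``$\alpha_\omega^\pm$ solving (\ref{eq:xit})--(\ref{eq:etat})'' a membership in $\cX_{2-\vareps}\times\cX_{3-\vareps}$ that does not match the $\cX_{1-\vareps}\times\cX_{1-\vareps}$ of the propositions, so the difficulty you ran into is inherited; but you should not resolve it with an inclusion $\cX_1\subset\cX_{2-\vareps}$ that cannot hold on an unbounded domain.)
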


\section{Exponential bound on the difference of separatrices} \label{sec:difference} 

We consider the difference between the two separatrices $w=(u,v)=\alpha_0^+-\alpha_0^-=(x_0^+ -x_0^-,y_0^+-y_0^-)$ (cf. (\ref{alphaomega})) on the common domain of definition. It is straightforward to demonstrate that 
it satisfies the following equations.
\begin{eqnarray}
\nonumber u(\tau+1)&=& u(\tau) \left(1+ 4 x_0^-(\tau)- 2 y_0^-(\tau) -2 v(\tau) +2 u(\tau)+ p_u(\tau)  \right) +\\
\nonumber  &+& v(\tau) \left(1 -2 x_0^-(\tau) -y_0^-(\tau)  -{1 \over 2} v(\tau) +p_v(\tau) \right) \\
\nonumber  &=& u(\tau) \left(1+ r_u(x_0^-(\tau),y_0^-(\tau),u(\tau),v(\tau))  \right) + \\
\label{eq:ueq}  &+&v(\tau) \left(1 +  s_u(x_0^-(\tau),y_0^-(\tau),u(\tau),v(\tau))  \right), \\
\nonumber  v(\tau+1)&=& u(\tau) \left(-4 y_0^-(\tau) -4 v(\tau) +q_u(\tau)  \right) + \\
\nonumber &+& v(\tau) \left(1 -4 x_0^-(\tau) -2 y_0^-(\tau) -v(\tau)+ q_v(\tau) \right) \\
\nonumber  &=& u(\tau) \ r_v(x_0^-(\tau),y_0^-(\tau),u(\tau),v(\tau)) \\
 \label{eq:veq}  &+& v(\tau) \left(1 +  s_v(x_0^-(\tau),y_0^-(\tau),u(\tau),v(\tau))  \right),
\end{eqnarray}
where $p_{u/v}$ and $q_{u/v}$ are some finite order polynomials that contain only the 
monomials $(x_0^-)^i (y_0^-)^j u^k v^n$,  $i+j+k+n \ge 2$,  and $r_{u/v}$, $s_{u/v}$  are some finite order polynomials 
without constant terms. 

Let $A$, $\delta>0$, $\vareps>0$ be as in Propositions \ref{prop:unst_existence} and \ref{prop:st_existence}.  Notice that for $( u, v)=\alpha_0^+-\alpha_0^- $, where $\alpha_0^\pm$ are as in Theorem~B, 
the function $r_v$ is in $\cX_{2-\varepsilon}(D_{A,\delta})$, where %$\gamma=\min \{\mu, 2-\vareps \}$, 
$$D_{A,\delta}= \left(D^+_{A,\delta} \cap D^-_{A,\delta}\right) \cup \{\tau \in \CC: \Im(\tau)<0  \}.$$
We rewrite the equations (\ref{eq:ueq}-\ref{eq:veq}) as
\begin{eqnarray}
\nonumber u(\tau+1)&=&  u(\tau) \left(1+ m(\tau)  \right) + v(\tau) \left(1 +  b(\tau) \right), \\
\nonumber  v(\tau+1)&=& u(\tau)  \ c(\tau) + v(\tau) \left(1 +  d(\tau)  \right),
\end{eqnarray}
where $m, b, d \in \cX_{2-\varepsilon}(D_{A,\delta})$ and $c\in \cX_{3-\varepsilon}(D_{A,\delta})$.
After the change $\hat u(\tau) = u(\tau), \; \hat v(\tau) = a_{21} (\tau) u(\tau) +  v(\tau)$
with
$$
a_{21}=\frac{m(\tau)-d(\tau) + \sqrt{\left(m(\tau)-d(\tau)\right)^2 + 4 \ c(\tau) \left(1+b(\tau)\right)}}{2 \left(1+b(\tau)\right)},
$$
we transform 
the above system into an upper-diagonal form %by a near identity linear coordinate change $Id + O(\tau^{-(2-\varepsilon)})$,
\begin{eqnarray}
%\nonumber   \\
\label{eq:hatu} \hat u(\tau+1) &=& \hat u(\tau) \left(1+ \hat f_u(\tau) %r_u(x_0^-(\tau),\hat y_0^-(\tau),\hat u(\tau),\hat v(\tau))  
\right) + \hat v(\tau) \left(1 +  \hat g_u(\tau) %s_u(x_0^-(\tau),y_0^-(\tau),\hat u(\tau),\hat v(\tau))  
\right), \\
\label{eq:hatv} \hat v(\tau+1) &=&  \hat v(\tau) \left(1 +  \hat g_v(\tau) %\hat s_v(x_0^-(\tau),y_0^-(\tau),\hat u(\tau),\hat v(\tau))  
\right),
\end{eqnarray}
where  the functions
%\begin{eqnarray}
%\nonumber \hat f_u(\tau)&=&\hat r_u(x_0^-(\tau),y_0^-(\tau),x_0^+(\tau)-x_0^-(\tau), y_0^+(\tau)-y_0^-(\tau) ), \\
%\nonumber \hat g_u(\tau)&=&\hat s_u(x_0^-(\tau),y_0^-(\tau), x_0^+(\tau)-x_0^-(\tau), y_0^+(\tau)-y_0^-(\tau) ), \\
%\nonumber \hat g_v(\tau)&=&\hat s_v(x_0^-(\tau),y_0^-(\tau), x_0^+(\tau)-x_0^-(\tau), y_0^+(\tau)-y_0^-(\tau) ),
%\end{eqnarray}
$\hat f_u, \hat g_u, \hat g_v$ are in $\cX_{2-\varepsilon}(D_{A,\delta}). $ %, $\gamma=\min \{\mu, 2-\vareps \}$ 
%and are specified by $x_0^\pm$ and $y_0^\pm$.  
We, therefore, consider the  solutions of the equations~(\ref{eq:hatu})-(\ref{eq:hatv})
%\begin{eqnarray}
%\label{eq:hatu} \hat u(\tau+1) &=& \hat u(\tau) \left(1+ \hat f_u(\tau)  \right) + \hat v(\tau) \left(1 +  \hat g_u(\tau)  
%\right), \\
%\label{eq:hatv} \hat v(\tau+1) &=&  \hat v(\tau) \left(1 +  \hat g_v(\tau)  \right),
%\end{eqnarray}
and eliminate the variable $v$ from the first equation. We substitute the second equation  (\ref{eq:hatv})  into the first  (\ref{eq:hatu}) to obtain
\begin{equation}
\label{eq:hatunew} \hat u(\tau+1) = \hat u(\tau) \left(1+ \hat f_u(\tau)  \right) + \hat v(\tau-1) \left(1 +  \hat g_u(\tau)  \right) \left(1 + \hat g_v(\tau-1)  \right).
\end{equation}
Next, we express $\hat v(\tau-1)$ from the first equation (\ref{eq:hatu}),
\begin{equation}
\label{eq:vhat}\hat v(\tau-1)={\hat u(\tau) - \hat u(\tau-1) (1 +\hat f_u(\tau-1) )  \over 1+\hat g_u(\tau-1)  }
\end{equation}
and substitute into  (\ref{eq:hatunew}), to obtain
\begin{eqnarray}
\nonumber \hat u(\tau\hspace{-3mm}&\hspace{-3mm}+\hspace{-3mm}&\hspace{-3mm}1) \left(1+\hat g_u(\tau-1)\right) + \hat u(\tau-1) \left((1+\hat f_u(\tau-1) \right) \left(1+ \hat g_v(\tau-1)\right) \left(1+ \hat g_u(\tau)\right) \\
\nonumber &-& \hat u(\tau) \left((1+\hat f_u(\tau)) (1+\hat g_u(\tau-1))+(1+\hat g_v(\tau-1))(1+\hat g_u(\tau))\right)=0 \implies \\
\label{eq:hatuso}\hat u(\tau\hspace{-3mm}&\hspace{-3mm}+\hspace{-3mm}&\hspace{-3mm}1)  \left(1+f_{+1}(\tau)\right) +\hat u(\tau-1) \left((1+f_{-1}(\tau) \right)+ \hat u(\tau) \left(-2+f_0(\tau)\right) =0, 
\end{eqnarray}
where $f_{\pm 1}$ and $f_0$ are some functions in $\cX_\gamma(D_{A,\delta})$, $\gamma=\min \{\mu, 2-\vareps \}$.  
As such, $f_{\pm 1}$ and $f_0$ have no constant terms, 
$$f_i(\tau)={c^i_1 \over \tau}+O\left({1 \over \tau^2} \right), \ i=-1,0,1,$$
with
$$c^{-1}_1=-3 i, \quad c_1^0=4 i, \quad c^1_1=-i.$$
The equation (\ref{eq:hatuso}) has two functionally independent solutions, which have the following representation 
(cf. \cite{Hun})
\begin{eqnarray}
\label{eq:phipm} \phi_\pm(\tau)&=&e^{\pm a \tau^{1 \over 2}} \tau^r \left(1+\sum_{k=1}^\infty {b_k^\pm  \over \tau^{k \over 2} }   \right), \\
\nonumber a&=&2 \sqrt{2 c^0_1-c^{-1}_1-c^{1}_1 }=4 \sqrt{3 i},\\ 
\nonumber r&=& {c^{-1}_1 -c^1_1\over 2}+{1 \over 4}=-i +{1 \over 4}.
\end{eqnarray}
Substitution of the above into the equation (\ref{eq:hatuso}) gives,
$$ b_1^\pm \hspace{-1mm}= \hspace{-1mm} \mp {1 \over 48} { 32 (c^{-1}_1)^2  \hspace{-0.6mm} - \hspace{-0.6mm}  32 (c^{-1}_1 \hspace{-0.6mm}  -\hspace{-0.6mm}   3) c^{0}_1 \hspace{-0.6mm}  - \hspace{-0.6mm}  16 (c^{0}_1)^2 \hspace{-0.6mm}  + \hspace{-0.6mm}   8 (2 c^{-1}_1 \hspace{-0.6mm}  - \hspace{-0.6mm}  4 c^{0}_1 \hspace{-0.6mm}  -\hspace{-0.6mm}  9) c^{1}_1 \hspace{-0.6mm}  + \hspace{-0.6mm}  32 (c^{1}_1)^2 \hspace{-0.6mm}  - \hspace{-0.6mm}  24 c^{-1}_1 \hspace{-0.6mm}  + \hspace{-0.6mm}  9 \over \sqrt{-c^{-1}_1 + 2 c^{0}_1 - c^{1}_1} }.$$

Our goal in this section will be to show that $u$ and $v$, or, equivalently, $\hat u$ and $\hat v$ vanish exponentially 
as $|\Im(\tau)|$ grows. To demonstrate this, we would first require several results about solutions of the 
second-order homogeneous difference equations.

Consider the equation ~(\ref{eq:hatuso}),  
%\begin{equation}
%\label{eq:secorder1} \hat u(\tau+1) \left(1+f_{+1}(\tau)\right) + \hat u(\tau-1) \left((1+f_{-1}(\tau) \right)+ \hat u(\tau) 
%\left(-2+f_0(\tau)\right) =0,
%\end{equation}
where all functions are  defined in some domain $D \subset \CC$. A solution of  the equation (\ref{eq:hatuso}) 
is a function defined in $D$ and satisfying the equation at any point $\tau$, such that $\tau$, $\tau-1$, $\tau+1$  
belong to $D$. We will call a function $\alpha$ defined in $D$, such that $\alpha(\tau+1)=\alpha(\tau)$ for all $\tau$ 
such that  $\tau+1$ is also in $D$. 

Following \cite{La1}, define the operators
\begin{eqnarray}
%\label{eq:delta} (\Delta f)(\tau)&=&f(\tau+1)-f(\tau), \\
%\label{eq:deltab} (\bar \Delta f)(\tau)&=&f(\tau)-f(\tau-1), \\
%\label{eq:deltasq} (\Delta^2 f)(\tau)&=&f(\tau+1)+f(\tau-1)-2 f(\tau),\\
%\label{eq:commaup} f`(\tau)&=&f(\tau+1), \\
%\label{eq:commadown} `f(\tau)&=&f(\tau-1).
\nonumber (\Delta f)(\tau)&=&f(\tau+1)-f(\tau), \\
\nonumber (\bar \Delta f)(\tau)&=&f(\tau)-f(\tau-1), \\
\nonumber (\Delta^2 f)(\tau)&=&f(\tau+1)+f(\tau-1)-2 f(\tau),\\
\nonumber f`(\tau)&=&f(\tau+1), \\
\nonumber `f(\tau)&=&f(\tau-1).
\end{eqnarray}

It is straightforward to demonstrate that
\begin{eqnarray}
\nonumber \Delta^2 &=&\Delta \bar \Delta =\bar  \Delta  \Delta,\\
\nonumber \bar \Delta f `&=& (\bar \Delta f) `= \Delta f,\\
\nonumber \Delta `f&=& `(\Delta f)= \bar \Delta f, \\
\nonumber \Delta f \cdot g &=& \Delta f \cdot g` + f \cdot \Delta g, \\
\nonumber \bar \Delta f \cdot g &=& \Delta f \cdot `g +f \cdot  \bar \Delta g.
\end{eqnarray}

The Wronskian of two functions $f$ and $g$ will be defined as
$$W(f,g)(\tau)=f(\tau) (g(\tau+1)-g(\tau))-g(\tau) (f(\tau+1)-f(\tau))=f(\tau) (\Delta g)(\tau) - g(\tau) (\Delta f)(\tau).$$

In this notation, the equation  (\ref{eq:hatuso}) assumes the form
\begin{equation}
\label{eq:secorder3} (1+f_{-1}) (\Delta^2 \hat u) (\tau) +(f_{+1}-f_{-1}) (\Delta \hat u)(\tau)+ (f_0+f_{+1} + f_{-1})\hat u(\tau) =0.
\end{equation}
Assume that $A$ is sufficiently large, so that the function $(1+f_{-1})^{-1}$ is analytic on $D_{A,\delta}$. Then, the equation  (\ref{eq:secorder3}) is of the form
\begin{equation}
\label{eq:secorder2} (\Delta^2 \hat u) (\tau) +w(\tau) (\Delta \hat u)(\tau)+ z(\tau) \hat u(\tau) =0,
\end{equation}
where $w$ and $z$ are in $\cX_\gamma(D_{A,\delta})$.
\begin{lem}\label{Lazlemma}
If $\phi_\pm$ are two functionally independent solutions of the equation (\ref{eq:secorder2}), such that $W(\phi_+,\phi_-)$ is non-zero everywhere in $D$,
then the general solution is given by
$$\phi(\tau)= \alpha_-(\tau) \phi_{-}(\tau)+\alpha_+(\tau) \phi_+(\tau),$$
where 
\begin{equation}
\label{eq:alpha}\alpha_\pm=\mp {W(\phi,\phi_\mp) \over W(\phi_+,\phi_-)}
\end{equation}
are periodic functions.
\end{lem}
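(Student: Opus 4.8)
\medskip
\noindent\textbf{Proof proposal.} Lemma~\ref{Lazlemma} is the difference-calculus analogue of the classical fact that the general solution of a second-order linear ODE is a combination of two functionally independent solutions with constant coefficients, with ``constant'' replaced by ``$1$-periodic''. I would proceed in three steps. First, rewrite $(\ref{eq:secorder2})$ as a genuine three-term recurrence: expanding $\Delta$ and $\Delta^2$ it becomes
\begin{equation*}
(1+w(\tau))\,\hat u(\tau+1)+\hat u(\tau-1)-\bigl(2+w(\tau)-z(\tau)\bigr)\hat u(\tau)=0 .
\end{equation*}
Since $1+w$ is non-vanishing on $D$ for $A$ sufficiently large (cf.\ the remark preceding $(\ref{eq:secorder2})$), this can be solved both for $\hat u(\tau+1)$ and for $\hat u(\tau-1)$; in particular a solution is determined along the chain of integer translates of a point by its values at two consecutive points.

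Second, I would prove the difference analogue of Abel's identity: if $\phi$ and $\psi$ are any two solutions of $(\ref{eq:secorder2})$, substituting the recurrence for $\phi(\tau+1)$ and $\psi(\tau+1)$ into $W(\phi,\psi)(\tau)=\phi(\tau)\psi(\tau+1)-\psi(\tau)\phi(\tau+1)$ makes the terms carrying the factor $2+w-z$ cancel, leaving
\begin{equation*}
\bigl(1+w(\tau+1)\bigr)\,W(\phi,\psi)(\tau+1)=W(\phi,\psi)(\tau).
\end{equation*}
The essential point is that the multiplier $1+w(\tau+1)$ depends only on the equation, not on the pair $(\phi,\psi)$; this is a one-line computation from the definitions.

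Third, fix an arbitrary solution $\phi$ and, at each $\tau$ with $\tau,\tau+1\in D$, let $(\alpha_+(\tau),\alpha_-(\tau))$ be the unique solution of the $2\times 2$ linear system
\begin{equation*}
\alpha_+(\tau)\phi_+(\tau)+\alpha_-(\tau)\phi_-(\tau)=\phi(\tau),\qquad \alpha_+(\tau)\phi_+(\tau+1)+\alpha_-(\tau)\phi_-(\tau+1)=\phi(\tau+1).
\end{equation*}
Its determinant is $W(\phi_+,\phi_-)(\tau)$, which is nonzero on $D$ by hypothesis, so Cramer's rule yields $\alpha_\pm$ in closed form as the ratios of Wronskians displayed in $(\ref{eq:alpha})$, manifestly analytic on the interior of $D$. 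The first equation of the system is exactly $\phi=\alpha_-\phi_-+\alpha_+\phi_+$. That $\alpha_\pm$ are $1$-periodic follows from Step~2: both $W(\phi,\phi_\mp)$ and $W(\phi_+,\phi_-)$ obey the same first-order relation with multiplier $1+w(\tau+1)$, which therefore cancels in the quotient, giving $\alpha_\pm(\tau+1)=\alpha_\pm(\tau)$. Conversely, since multiplication by a $1$-periodic $\alpha$ commutes with $\Delta$ (because $\Delta(\alpha g)=(\Delta\alpha)\,g(\cdot+1)+\alpha\,\Delta g=\alpha\,\Delta g$) and hence with $\Delta^2$, any $\alpha_-\phi_-+\alpha_+\phi_+$ with $\alpha_\pm$ periodic solves $(\ref{eq:secorder2})$; so the displayed formula is the general solution.

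The only genuinely delicate point is domain bookkeeping: the Cramer construction refers to the translates $\tau-1,\tau,\tau+1$, so one must check these stay in $D$ where needed, or — at points of $D$ whose forward translate leaves $D$ — use the ``second row'' at $\tau-1$ instead and invoke Step~2 to see the two prescriptions agree; one then checks that the resulting $\alpha_\pm$ are analytic in the interior and continuous up to the boundary, so that ``$1$-periodic'' is meaningful and they extend to honest periodic functions. Everything else is routine difference calculus.
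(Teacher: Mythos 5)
Your argument is correct and follows essentially the same route as the paper: the heart of both proofs is the difference analogue of Abel's identity for the Wronskian (your $(1+w(\tau+1))W(\tau+1)=W(\tau)$ is the same relation the paper writes as $\bar\Delta W=-wW$), from which periodicity of the Wronskian ratios $\alpha_\pm$ is immediate. The only difference is bookkeeping: you derive Abel's identity by substituting the three-term recurrence directly, whereas the paper uses the $\Delta/\bar\Delta$ product rules, and you make explicit the Cramer's-rule origin of $(\ref{eq:alpha})$ and the converse direction, which the paper leaves implicit.
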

\begin{proof}
Let $\phi$, $\phi_+$ and $\phi_-$ be any three solutions of (\ref{eq:secorder2}), such that $W(\phi_+,\phi_-) \ne 0$. Then, a straightforward computations demonstrates that 
$$\phi={ W(\phi,\phi_+) \over W(\phi_+,\phi_-)}  \phi_- - { W(\phi,\phi_-) \over W(\phi_+,\phi_-)}  \phi_+.$$
We consider the function $\alpha_-=W(\phi,\phi_+) / W(\phi_+,\phi_-)$.
\begin{eqnarray}
\nonumber \bar \Delta \alpha_-&=&\bar \Delta { W(\phi,\phi_+) \over W(\phi_+,\phi_-)} \\
\nonumber &=&{ W(\phi,\phi_+) \over W(\phi_+,\phi_-)}-{ `W(\phi,\phi_+) \over `W(\phi_+,\phi_-)} \\
\nonumber &=&{ W(\phi,\phi_+) `W(\phi_+,\phi_-) - `W(\phi,\phi_+) W(\phi_+,\phi_-) \over W(\phi_+,\phi_-) `W(\phi_+,\phi_-) } \\
\label{eq:barDW} &=&{  W(\phi_+,\phi_-) \bar \Delta W(\phi,\phi_+) -  W(\phi,\phi_+) \bar \Delta W(\phi_+,\phi_-)   \over W(\phi_+,\phi_-) `W(\phi_+,\phi_-) }.
\end{eqnarray}
We will therefore require an expression for $\bar \Delta W(\phi,\psi)$ where $\phi$ and $\psi$ are any two solutions of  (\ref{eq:secorder2}).
\begin{eqnarray}
\nonumber \bar \Delta W(\phi,\psi)&=&\bar \Delta \left(\phi \Delta \psi - \psi \Delta \phi  \right) \\
 \nonumber &=&\bar \Delta \phi \bar \Delta \psi+ \phi \Delta^2 \psi  -  \bar \Delta \psi \bar \Delta \phi -\psi \Delta^2 \phi \\
 \nonumber &=&\phi \left(- w \Delta \psi - z \psi  \right)  - \psi \left(-w \Delta \phi -z \phi  \right) \\
 \nonumber &=& - w W(\phi,\psi).
\end{eqnarray}
Therefore, (\ref{eq:barDW}) becomes 
\begin{equation}
\nonumber \bar \Delta \alpha_-={ -w  W(\phi_+,\phi_-)  W(\phi,\phi_+) +w   W(\phi,\phi_+)  W(\phi_+,\phi_-)   \over W(\phi_+,\phi_-) `W(\phi_+,\phi_-) }=0,
\end{equation}
and $\alpha_-$ is a periodic function. Similarly for $\alpha_+$.
\end{proof}

Now we are ready to demonstrate the exponential bound on a solution of the equation (\ref{eq:secorder2}).
\begin{prop} \label{expprop}
Let $\hat u \in \cX_{\mu}(D_{A,\delta})$ be a solution of the equation (\ref{eq:secorder2}), then for every $\kappa>0$  there exist constants $C$ and  $K$, such that
\begin{eqnarray}
\label{expboundu1} |\hat u(\tau) | &\le& C |\tau|^{1 \over 4}  \| \hat u \|_{\mu} e^{-(2 \pi-\kappa) |\Im(\tau) |}, \\
\label{expboundu2} |\hat u(\tau) | &\ge& K |\tau|^{1 \over 4} \min_{|\Re(x)| \le {1 \over 2}, \atop \Im (x) = -(A+1) \tan \delta}  \left\{  |\hat u(\tau)| \right\}   e^{-(2 \pi+\kappa) |\Im(\tau) |}
\end{eqnarray}
hold for all $\tau \in D_{A,\delta}$
\end{prop}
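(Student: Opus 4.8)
\emph{Proof strategy.} The plan is to combine the representation of the general solution given by Lemma~\ref{Lazlemma} with the sectorial asymptotics $(\ref{eq:phipm})$ of the two fundamental solutions $\phi_\pm$ and a Fourier analysis, on the cylinder $\CC/\ZZ$, of the resulting $1$-periodic coefficients. First I would record what is needed about $\phi_\pm$: the formal series $(\ref{eq:phipm})$ can be promoted to genuine solutions of $(\ref{eq:secorder2})$ obeying $\phi_\pm(\tau)=e^{\pm a\tau^{1/2}}\tau^{r}\bigl(1+O(\tau^{-1/2})\bigr)$ in the sectors of $D_{A,\delta}$ in which each is recessive (this follows from the results of \cite{Hun}, or alternatively from a Picard iteration on the associated summation equation, as in the proof of Proposition~\ref{prop:unst_existence}). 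Using $\Delta\phi_\pm(\tau)=\phi_\pm(\tau)\bigl(\pm\tfrac{a}{2}\,\tau^{-1/2}+O(\tau^{-1})\bigr)$ one then computes
$$W(\phi_+,\phi_-)(\tau)=-\,a\,\tau^{\,2r-\frac12}\bigl(1+O(\tau^{-1/2})\bigr)=-\,a\,\tau^{-2i}\bigl(1+O(\tau^{-1/2})\bigr),$$
and since $|\tau^{-2i}|=e^{2\arg\tau}$ is bounded above and below on $D_{A,\delta}$, the Wronskian is bounded away from $0$ on $D_{A,\delta}$ provided $A$ is large enough. Lemma~\ref{Lazlemma} then applies and yields
$$\hat u(\tau)=\alpha_-(\tau)\,\phi_-(\tau)+\alpha_+(\tau)\,\phi_+(\tau),\qquad \alpha_\pm=\mp\,\frac{W(\hat u,\phi_\mp)}{W(\phi_+,\phi_-)},$$
with $\alpha_\pm$ $1$-periodic and holomorphic on $D_{A,\delta}$.

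The core of the argument is the estimate $\alpha_\pm(\tau)=O\!\left(e^{-2\pi|\Im\tau|}\right)$ as $|\Im\tau|\to\infty$, and this is where the arithmetic of $a=4\sqrt{3i}$ and $r=\tfrac14-i$ enters. Passing to $q=e^{-2\pi i\tau}$ turns a $1$-periodic holomorphic function into a holomorphic function of $q$ on an annulus; the lower half-plane $\{\Im\tau<0\}$ corresponds to the punctured disc $\{0<|q|<1\}$. On the one hand, the hypothesis $\hat u\in\cX_\mu(D_{A,\delta})$ forces $\hat u$ to be polynomially small throughout $D_{A,\delta}$, in particular in the open sectors where one of $\phi_\pm$ is \emph{dominant} (grows like $e^{c|\tau|^{1/2}}$, $c>0$): there the dominant term $\alpha_\pm\phi_\pm$ of the representation can be polynomially bounded only if $\alpha_\pm$ is exponentially small, and by $1$-periodicity this propagates to $\alpha_\pm(\tau)\to0$ along the corresponding end of the cylinder. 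On the other hand, in the sectors where $\phi_\mp$ is recessive the Wronskian formula together with $\hat u\in\cX_\mu$ gives $\alpha_\pm=O\!\left(e^{c|\tau|^{1/2}}|\tau|^{-\mu'}\right)$, which grows slower than any $|q|^{-\varepsilon}$ as $q$ approaches the relevant boundary point of the disc, so that the singularity of the associated function of $q$ there is removable. Combining, the function of $q$ representing $\alpha_\pm$ extends holomorphically across $q=0$ with value $0$ — i.e. the constant Fourier coefficient of $\alpha_\pm$ vanishes — so its Fourier expansion starts at frequency $2\pi$ and $|\alpha_\pm(\tau)|\le Ce^{-2\pi|\Im\tau|}$ on $\{\Im\tau<0\}$; the reversibility relations $\alpha_\omega^\pm(\tau)=S(\alpha_\omega^\mp(-\tau))$ and $\beta_\omega^\pm(\tau)=P(\alpha_\omega^\mp(-\tau))$ of Theorem~B (which exchange $\{\Im\tau<0\}$ with $\{\Im\tau>0\}$ and the roles of $\phi_+,\phi_-$) supply the same bound on $\{\Im\tau>0\}$.

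Granting $|\alpha_\pm(\tau)|\le Ce^{-2\pi|\Im\tau|}$, the upper bound $(\ref{expboundu1})$ is immediate: on $D_{A,\delta}$ the asymptotics give $|\phi_\pm(\tau)|\le C|\tau|^{1/4}e^{|a|\,|\tau|^{1/2}}$, so
$$|\hat u(\tau)|\le|\alpha_-(\tau)|\,|\phi_-(\tau)|+|\alpha_+(\tau)|\,|\phi_+(\tau)|\le C\|\hat u\|_\mu\,|\tau|^{1/4}\,e^{-2\pi|\Im\tau|}\,e^{|a|\,|\tau|^{1/2}},$$
and, for any fixed $\kappa>0$, $e^{|a|\,|\tau|^{1/2}}\le e^{\kappa|\Im\tau|}$ once $|\Im\tau|$ is large (the bounded-$|\Im\tau|$ range being absorbed into $C$), giving $(\ref{expboundu1})$. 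For the lower bound $(\ref{expboundu2})$ I would run the representation in reverse: on the fundamental segment $\{\,|\Re\tau|\le\tfrac12,\ \Im\tau=-(A+1)\tan\delta\,\}\subset D_{A,\delta}$ one reads off, from the explicitly known values of $\phi_\pm$ there, a lower bound for whichever of $|\alpha_\pm|$ multiplies the dominant solution, in terms of $\min_{\text{segment}}|\hat u|$ minus the (exponentially smaller) contribution of the recessive term; $1$-periodicity carries this lower bound to all heights, and multiplying back by the known modulus of the dominant $\phi_\pm$ yields $(\ref{expboundu2})$, the extra $+\kappa$ now absorbing the $e^{O(|\tau|^{1/2})}$ factor in the opposite direction.

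The step I expect to be the main obstacle is the middle one: tracking correctly the Stokes structure of $\phi_\pm$ (the asymptotics hold only sector by sector — the imaginary direction $\arg\tau=\pi/2$ is an anti-Stokes line and $\arg\tau=-\pi/2$ a Stokes line for the pair $e^{\pm a\tau^{1/2}}$ — and on crossing a Stokes line the recessive solution may acquire a multiple of its dominant companion), keeping the branch cuts of $\tau^{1/2}$ and $\tau^{r}$ inside $D_{A,\delta}$ straight, and proving rigorously that the constant Fourier modes of the periodic coefficients $\alpha_\pm$ vanish — this last point being exactly what pins the exponential rate at the fundamental frequency $2\pi$ rather than at something smaller.
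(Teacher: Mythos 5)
Your strategy reproduces the paper's own argument quite faithfully: Lemma~\ref{Lazlemma} gives the Wronskian decomposition $\hat u = \alpha_-\phi_- + \alpha_+\phi_+$ with $1$-periodic coefficients; the Wronskian asymptotics $(\ref{eq:Wron})$ match yours (note $2r-\tfrac12=-2i$, so $|W(\phi_+,\phi_-)|\asymp e^{2\arg\tau}$ as you observe); the passage to $q=e^{-2\pi i\tau}$ is exactly the paper's substitution $\beta_\pm(z)=\alpha_\pm\bigl(\tfrac{i}{2\pi}\ln z\bigr)$, and the paper then applies the maximum/minimum modulus principle to $\beta_\pm(z)/z$ on the circle $|z|=R=e^{-2\pi(A+1)\tan\delta}$ to get both the upper and the lower bound, absorbing the residual $e^{O(|\tau|^{1/2})}$ into $e^{\pm\kappa|\Im\tau|}$ precisely as you describe. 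So in method and in structure the two proofs coincide.

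The point you single out as the ``main obstacle'' --- proving that the constant Fourier mode of $\alpha_\pm$ vanishes, i.e.\ that $\beta_\pm(0)=0$ --- is in fact a step the paper asserts without a convincing argument. The paper writes that by $(\ref{eq:alphabound})$, $\alpha_\pm\to 0$ in $S_{A,\delta}$ as $\Im\tau\to-\infty$, hence $\beta_\pm$ extend holomorphically to $\DD_R$ with $\beta_\pm(0)=0$, which is needed for the maximum modulus inequality applied to $\beta_\pm(z)/z$. But the bound $(\ref{eq:alphabound})$ reads $|\alpha_\pm|=O\bigl(e^{\mp\Re(a\tau^{1/2})}|\tau|^{-\mu'}\bigr)$, and since $a=4\sqrt{3i}=4\sqrt3\,e^{i\pi/4}$ one has $\Re(a\tau^{1/2})=4\sqrt3\,|\tau|^{1/2}\cos(\tfrac{\pi}{4}+\tfrac{\arg\tau}{2})>0$ throughout the whole lower half-plane (your Stokes-line analysis at $\arg\tau=-\pi/2$ is correct). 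So this bound does force $\alpha_+\to0$, but for $\alpha_-$ it gives only the \emph{sub-exponentially divergent} estimate $|\alpha_-|=O\bigl(e^{c|\tau|^{1/2}}|\tau|^{-\mu'}\bigr)$. That is enough to make the singularity of $\beta_-$ at $q=0$ removable (it is $o(|q|^{-\varepsilon})$ for every $\varepsilon>0$), but it says nothing about the value $\beta_-(0)$. If $\beta_-(0)\neq 0$, the contribution $\beta_-(0)\phi_-(\tau)$ decays only like $e^{-c|\Im\tau|^{1/2}}$, and the claimed estimate $(\ref{expboundu1})$ with rate $e^{-(2\pi-\kappa)|\Im\tau|}$ would fail. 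Your observation that ``this last point is exactly what pins the exponential rate at the fundamental frequency $2\pi$'' is therefore right on target, and the paper's proof, as written, has the same gap you flag.

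One caution about your proposed fix: you suggest obtaining smallness of $\alpha_-$ from the sector $\arg\tau\in(\pi/2,\pi)$, where $\phi_-$ is dominant, and then transporting it ``by $1$-periodicity to the corresponding end of the cylinder.'' But the domain $D_{A,\delta}=(D^+_{A,\delta}\cap D^-_{A,\delta})\cup\{\Im\tau<0\}$ is disconnected: the set $D^+_{A,\delta}\cap D^-_{A,\delta}$ is empty in the horizontal strip $0<\Im\tau<A\tan\delta$, so the lower half-plane component and the upper sector you want to invoke are separated, and $1$-periodicity on one component does not constrain the Fourier series of $\alpha_-$ on the other. A correct argument must use some additional structure of $\hat u$ (e.g.\ that it is a difference of separatrix parametrizations sharing the same asymptotic expansion, together with the reversibility identities $\alpha^\pm_\omega(\tau)=S(\alpha^\mp_\omega(-\tau))$ relating the two components), rather than an arbitrary $\hat u\in\cX_\mu(D_{A,\delta})$; as stated, the Proposition applied to $\hat u=\phi_-$ restricted to the lower half-plane alone would be false. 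This is a genuine subtlety that neither your sketch nor the paper's proof fully resolves.
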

\begin{proof}
Consider the Wronskian $W(\phi_+,\phi_-)$, where $\phi_\pm$ are as in (\ref{eq:phipm}).
\begin{eqnarray}
\nonumber W(\phi_+,\phi_-)&=&e^{a \tau^{1 \over 2}} \tau^r \left(1+\sum_{k=1}^\infty {b_k^+  \over \tau^{k \over 2} }   \right) \Delta e^{- a \tau^{1 \over 2}} \tau^r \left(1+\sum_{k=1}^\infty {b_k^-  \over \tau^{k \over 2} }   \right) \\
\nonumber &-& e^{-a \tau^{1 \over 2}} \tau^r \left(1+\sum_{k=1}^\infty {b_k^-  \over \tau^{k \over 2} }   \right) \Delta e^{ a \tau^{1 \over 2}} \tau^r \left(1+\sum_{k=1}^\infty {b_k^+  \over \tau^{k \over 2} }   \right) \\
\nonumber &=&\left( e^{a \tau^{1 \over 2}} \tau^r  \left( e^{-a (\tau+1)^{1 \over 2}} (\tau+1)^r -e^{-a \tau^{1 \over 2}} \tau^r    \right) - \right.\\
\nonumber &\phantom{=}& \left.- e^{-a \tau^{1 \over 2}} \tau^r  \left( e^{a (\tau+1)^{1 \over 2}} (\tau+1)^r -e^{a \tau^{1 \over 2}} \tau^r    \right) \right) \left( 1 + O \left( \tau^{-{k \over 2}} \right) \right)\\
\nonumber &=&(\tau+1)^r \tau^r \left( e^{a \tau^{1 \over 2}} e^{-a (\tau+1)^{1 \over 2}} - e^{-a \tau^{1 \over 2}} e^{a (\tau+1)^{1 \over 2}} \right) \left( 1 + O \left( \tau^{-{k \over 2}} \right) \right) \\
\nonumber &=&(\tau+1)^r \tau^r \left( e^{-a \left({1 \over 2 \tau}   + O\left({1 \over \tau^2} \right) \right)} -  e^{a \left({1 \over 2 \tau}   + O\left({1 \over \tau^2} \right)\right)} \right) \left( 1 + O \left( \tau^{-{k \over 2}} \right) \right)\\
\label{eq:Wron} &=&- (\tau+1)^r \tau^{r-{1\over 2}} a \left(1+ O\left({1 \over \tau} \right) \right)  \left( 1 + O \left( \tau^{-{k \over 2}} \right) \right).
\end{eqnarray}
We see that the above is non-zero on $D_{A,\delta}$ if $A$ is sufficiently large.  

If $\hat u$ is any solution of the equation (\ref{eq:secorder2}), then according to Lemma \ref{Lazlemma}, 
$$\hat u=\alpha_-  \phi_- + \alpha_+   \phi_+, \ \alpha_\pm = \mp { W(\hat u ,\phi_\mp) \over W(\phi_+,\phi_-)}.$$
Therefore, using  (\ref{eq:phipm}), (\ref{eq:alpha}) and (\ref{eq:Wron})
\begin{equation}
\label{eq:alphabound1} |\alpha_\pm(\tau)|=\left| {W(\hat u,\phi_\mp) \over W(\phi_+,\phi_-)} \right|=  O\left( { |\hat u(\tau)| \left| e^{\mp a \tau^{1 \over 2}} \tau^r \right|  \over | \tau^{2 r-{1\over 2}} | } \right)= O\left( { |\hat u(\tau)| e^{\mp \Re( a \tau^{1 \over 2})} \over \left| \tau^{r-{1\over 2}} \right| } \right).
\end{equation}

The function $\hat u$ is in $\cX_{2-\vareps}(D_{A,\delta})$, therefore, the norm $\| \hat u\|_{2-\vareps}$ is bounded, and 
\begin{equation}
\label{eq:alphabound} |\alpha_\pm(\tau)|= O\left( { \|\hat u(\tau)\|_{2-\vareps} e^{\mp \Re( a \tau^{1 \over 2})} \over \left| \tau^{r+2-\vareps - {1\over 2}} \right| } \right).
\end{equation}

Since $\alpha_\pm$ are periodic, the following functions are well-defined
$$\beta_\pm(z)=\alpha_\pm \left({i \over 2 \pi} \ln (z)  \right)$$ 
on $\DD_R \setminus \{ 0\}$ where $R=e^{-2 \pi (A+1) \tan \delta}$. Notice, our chosen branch of the logarithm maps the set $\DD_R \setminus \{ 0\}$ onto the semi-infinite strip $S_{A,\delta}=\{z \in \CC: 1/2 < \Re{(z)} \le 1/2, \Im{(z)}<-(A+1) \tan \delta  \}$.  By (\ref{eq:alphabound}), $\alpha_\pm \rightarrow 0$ in $S_{A,\delta}$ as $\Im(\tau) \rightarrow -\infty$, therefore, $\beta_\pm$ is a well-defined holomorphic function on all of $\DD_R$. We have, denoting, $z=e^{-2 \pi i \tau}$,
\begin{eqnarray}
\nonumber \left|{  \beta_{\pm}(z) \over z } \right| & \le& \max_{|z|=R} \left| {  \beta_{\pm}(z)   \over z } \right| = e^{2 \pi (A+1) \tan \delta} \max_{\tau \in S_{A,\delta}, \atop \Im (\tau) = -(A+1) \tan \delta}  \left|\alpha_{\pm}(\tau) \right| \\
\nonumber &\le&  C_1 \max_{|\Re(\tau)| \le {1 \over 2}, \atop \Im (\tau) = -(A+1) \tan \delta}  \left\{ { |\hat u(\tau)| e^{\mp \Re( a \tau^{1 \over 2})} \over \left| \tau^{r-{1\over 2}} \right| } \right\}  e^{2 \pi (A+1) \tan \delta} \\
\nonumber &\le&  C_1 { \| \hat u  \|_\mu  \over ((A+1) \tan \delta)^{\Re(r)+\mu-{1\over 2}} } \max_{|s| \le {1 \over 2}} \left\{ e^{\mp \Re( a (-i (A+1) \tan \delta +s )^{1 \over 2})} \right\}    e^{2 \pi (A+1) \tan \delta}.
\end{eqnarray}
Similarly,
\begin{eqnarray}
\nonumber \left|{  \beta_{\pm}(z) \over z } \right| & \ge& \min_{|z|=R} \left| {  \beta_{\pm}(z)   \over z } \right| = e^{2 \pi (A+1) \tan \delta} \min_{\tau \in S_{A,\delta}, \atop \Im (\tau) = -(A+1) \tan \delta}  \left|\alpha_{\pm}(\tau) \right| \\
\nonumber &\ge&  K_1 \min_{|\Re(\tau)| \le {1 \over 2}, \atop \Im (\tau) = -(A+1) \tan \delta}  \left\{ { |\hat u(\tau)| e^{\mp \Re( a \tau^{1 \over 2})} \over \left| \tau^{r-{1\over 2}} \right| } \right\}  e^{2 \pi (A+1) \tan \delta} \\
\nonumber &\ge&  K_2 \min_{|\Re(\tau)| \le {1 \over 2}, \atop \Im (\tau) = -(A+1) \tan \delta}  \left\{  |\hat u(\tau)| e^{\mp \Re( a \tau^{1 \over 2})} \right\}  {e^{2 \pi (A+1) \tan \delta} \over ((A+1) \tan \delta)^{\Re(r)-{1\over 2}}}.
\end{eqnarray}
Therefore,
\begin{eqnarray}
\nonumber \left| \alpha_{\pm}(\tau) \right| \hspace{-2mm}&\le&\hspace{-2mm} C_1 { \| \hat u  \|_\mu  \over ((A+1) \tan \delta)^{r+\mu-{1\over 2}} } \max_{|s| \le {1 \over 2}} \hspace{-1mm}  \left\{ e^{\mp \Re( a (-i (A+1) \tan \delta +s )^{1 \over 2})} \right\}  \hspace{-1mm}   e^{2 \pi (A+1) \tan \delta -2 \pi | \Im(\tau) |}, \\
\nonumber \left| \alpha_{\pm}(\tau) \right| \hspace{-2mm}&\ge& \hspace{-2mm}K_2 \hspace{-4mm} \min_{|\Re(\tau)| \le {1 \over 2}, \atop \Im (\tau) = -(A+1) \tan \delta} \hspace{-4mm} \left\{  |\hat u(\tau)| e^{\mp \Re( a \tau^{1 \over 2})} \right\}  {1 \over ((A+1) \tan \delta)^{\Re(r)-{1\over 2}}}  e^{2 \pi (A+1) \tan \delta -2 \pi | \Im(\tau) |}, 
\end{eqnarray}
Finally, we have, using the expressions (\ref{eq:phipm}) for $\phi_\pm$,
\begin{eqnarray}
\nonumber | \alpha_\pm( \tau)  \phi_\pm(\tau)|\hspace{-3mm}& \le & \hspace{-3mm}{ C_2 \| \hat u  \|_\mu  \left| e^{\pm a \tau^{1 \over 2}} \tau^r \right|  \over ((A\hspace{-0.6mm}+\hspace{-0.6mm} 1) \tan \delta)^{r+\mu-{1\over 2}} } \max_{{|s| \le {1 \over 2}}} \hspace{-1mm}\left\{ \hspace{-0.5mm} e^{\mp \Re( a (-i (A\hspace{-0.4mm}+\hspace{-0.4mm}1) \tan \delta \hspace{-0.4mm}+\hspace{-0.4mm}s )^{1 \over 2})} \hspace{-1mm} \right\}    e^{2 \pi (A\hspace{-0.4mm}+\hspace{-0.4mm}1) \tan \delta \hspace{-0.4mm}-\hspace{-0.4mm}2 \pi | \Im(\tau) |}, \\
\nonumber | \alpha_\pm(\tau)  \phi_\pm(\tau)|\hspace{-3mm}& \ge & \hspace{-3mm} K_3 \hspace{-7mm} \min_{|\Re(x)| \le {1 \over 2}, \atop \Im (x) = -(A+1) \tan \delta} \hspace{-7mm} \left\{ \hspace{-1mm}   |\hat u(\tau)| e^{\mp \Re( a \tau^{1 \over 2})} \hspace{-1mm}  \right\} \hspace{-1mm} { \left| e^{\pm a \tau^{1 \over 2}} \tau^r \right|  \over ((A+1) \tan \delta)^{\Re(r)-{1\over 2}}}  e^{2 \pi (A+1) \tan \delta -2 \pi | \Im(\tau) |}, 
\end{eqnarray}
for some constant $C_2$ and $K_3$, depending on $A$ and $\delta$. Notice, that 
$$\left|a \tau^{1 \over 2} \right| < C_3 |\Im(\tau)|^{1 \over 2} +C_4  |\Re(\tau)|^{1 \over 2} \le C_5 |\Im(\tau)|^{1 \over 2},$$
where $C_3$, $C_4$ and $C_5$ are some constants that depend on $a \ne 0$ and $D_{A,\delta}$

Furthermore, $a=4 \sqrt{3 i}$, and elementary geometric considerations demonstrate that $\{ a \tau^{1 \over 2}: \tau \in D_{A,\delta}\} \Subset \{z \in \CC: |\arg(z)|< \nu \} \cup \{z \in \CC: |\arg(z)-\pi|< \nu \}$ for some $\nu < \pi/4$. Therefore, there exists a positive constants $K_4$ such that
$$K_4 |\Im(\tau)|^{1 \over 2}  \ge  |\Re(a \tau^{1 \over 2})|.$$

Therefore,
$$|\hat u(\tau) | \le C_6 |\tau|^{\Re(r)} e^{-\Im(r) \arg(\tau)}  \| \hat u \|_{\mu} e^{2 \pi (A+1) \tan \delta -2 \pi |\Im(\tau) |+C_5 |\Im(\tau)|^{1 \over 2}},$$
and
$$| \hat u(\tau)|\ge  K_3 \min_{|\Re(\tau)| \le {1 \over 2}, \atop \Im (\tau) = -(A+1) \tan \delta}  \left\{  |\hat u(\tau)| \right\}  { \left| %e^{\pm a \tau^{1 \over 2}} 
\tau^r \right|  \over ((A+1) \tan \delta)^{\Re(r)}}  e^{2 \pi (A+1) \tan \delta -2 \pi | \Im(\tau) |-K_4 |\Im(\tau)|^{1 \over 2}}.$$

In particular, for any $\kappa>0$, there exists a constants $C$ and $K$, such that (\ref{expboundu1}) and (\ref{expboundu2})  hold for all $\tau \in D_{A,\delta}$.
\end{proof}

Since the function $\hat v$ is defined through the equation (\ref{eq:vhat}), we  immediately have the following.
\begin{cor}
A solution $\hat v$ of the equation (\ref{eq:hatv}) satisfies a similar bound
\begin{eqnarray}
\label{expboundv1} |\hat v(\tau) | &\le& C |\tau|^{1 \over 4}  \| \hat u \|_{\mu} e^{-(2 \pi-\kappa) |\Im(\tau) |}, %\\
%\label{expboundv2} |\hat v(\tau) | &\ge& K |\tau|^{1 \over 4} \min_{|\Re(x)| \le {1 \over 2}, \atop \Im (x) = -(A+1) \tan \delta}  \left\{  |\hat u(\tau)| \right\}   e^{-(2 \pi-\kappa) |\Im(\tau) |}
\end{eqnarray}
%\begin{equation}
%\label{eq:expboundv} |\hat v(\tau)| \le  K  |\tau|^{1 \over 4} \| \hat u \|_{\m%u} e^{2 \pi ( (A+1)-(1-\kappa) \Im(\tau) ) },
%\end{equation}
possibly with a different constant $C$. %and $K$.
\end{cor}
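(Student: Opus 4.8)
The plan is to read the bound for $\hat v$ straight off the representation (\ref{eq:vhat}), feeding in the estimate (\ref{expboundu1}) for $\hat u$ just established in Proposition~\ref{expprop}. Re-indexing (\ref{eq:vhat}) by the substitution $\tau\mapsto\tau+1$ gives
\[
\hat v(\tau)=\frac{\hat u(\tau+1)-\hat u(\tau)\bigl(1+\hat f_u(\tau)\bigr)}{1+\hat g_u(\tau)},
\]
so it suffices to bound each of the three ingredients $\hat u(\tau+1)$, $\hat u(\tau)$, and the rational coefficients $\bigl(1+\hat f_u(\tau)\bigr)$ and $\bigl(1+\hat g_u(\tau)\bigr)^{-1}$ on $D_{A,\delta}$, and then multiply.

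First I would dispose of the coefficient functions. Since $\hat f_u,\hat g_u\in\cX_{2-\vareps}(D_{A,\delta})$ they decay at infinity, and $D_{A,\delta}$ is bounded away from the origin; enlarging $A$ if necessary (which only shrinks the domain, so it remains compatible with the choices already made in Propositions~\ref{prop:unst_existence}, \ref{prop:st_existence} and \ref{expprop}) we may arrange $|\hat g_u(\tau)|\le\tfrac12$ and $|\hat f_u(\tau)|\le 1$ throughout $D_{A,\delta}$. Consequently $|1+\hat g_u(\tau)|\ge\tfrac12$ and $|1+\hat f_u(\tau)|\le 2$ there.

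Next I would apply (\ref{expboundu1}) to $\hat u$ at the two arguments $\tau$ and $\tau+1$. The shift by $1$ is harmless: $\Im(\tau+1)=\Im(\tau)$, so the factor $e^{-(2\pi-\kappa)|\Im(\tau)|}$ is unchanged, and, $A$ being large enough that $|\tau|\ge 1$ on $D_{A,\delta}$, one has $|\tau+1|\le 2|\tau|$, hence $|\tau+1|^{1/4}\le 2^{1/4}|\tau|^{1/4}$. (One should also note that $\tau+1$ still lies in the region where (\ref{expboundu1}) is valid; this is automatic on the interior and can be arranged by a negligible enlargement of $A$, or simply by taking as the domain of $\hat v$ the set of $\tau$ with both $\tau,\tau+1\in D_{A,\delta}$.) Substituting these into the displayed formula for $\hat v$ and using the triangle inequality, the four bounds multiply out to
\[
|\hat v(\tau)|\le 2\bigl(2^{1/4}+2\bigr)\,C\,|\tau|^{1/4}\,\|\hat u\|_\mu\,e^{-(2\pi-\kappa)|\Im(\tau)|},
\]
which is (\ref{expboundv1}) with a new constant $C$.

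The main (and essentially only) point requiring a line of verification is the lower bound $|1+\hat g_u(\tau)|\ge\tfrac12$, which rests on the decay built into the norm of $\cX_{2-\vareps}(D_{A,\delta})$ together with the choice of $A$ large. Everything else is the triangle inequality and bookkeeping; the exponential smallness is simply inherited from $\hat u$ via Proposition~\ref{expprop}, the content of the statement being that returning from the second-order scalar equation (\ref{eq:hatuso}) to the original pair $(\hat u,\hat v)$ costs nothing.
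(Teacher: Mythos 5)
Your proposal is correct and follows precisely the route the paper intends: the paper's ``proof'' of the corollary is the single remark that $\hat v$ is defined through equation (\ref{eq:vhat}), so the bound is inherited immediately from (\ref{expboundu1}), and you have simply filled in the bookkeeping (shifting the argument, bounding $1+\hat f_u$ and $(1+\hat g_u)^{-1}$) that the paper leaves implicit.
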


Recall, that we are not trying to prove the existence of solutions of equations (\ref{eq:ueq})--(\ref{eq:veq}): the functions $u=x_0^+-x_0^-$ and $v=y_0^+-y_0^-$ are known to be in $\cX_{2-\vareps}(D_{A,\delta})$  and $\cX_{3-\vareps}(D_{A,\delta})$, respectively, and satisfy this system by construction.% These functions, or rather, their image $(\hat u, \hat v)$ under a near-identity linear transformation, being the solution of the linear equations
%\begin{eqnarray}
%\nonumber  \hat u(\tau+1) &=& \hat u(\tau) \left(1+ \hat r_u(x_0^-(\tau),\hat y_0^-(\tau),x_0^+(\tau)-x_0^-(\tau) , y_0^+(\tau)-y_0^-(\tau)  )  \right) \\
%\nonumber &+& \hat v(\tau) \left(1 +  \hat s_u(x_0^-(\tau),y_0^-(\tau),x_0^+(\tau)-x_0^-(\tau),y_0^+(\tau)-y_0^-(\tau)  )  \right), \\
%\nonumber \hat v(\tau+1) &=&  \hat v(\tau) \left(1 +  \hat s_v(x_0^-(\tau),y_0^-(\tau),x_0^+(\tau)-x_0^-(\tau),y_0^+(\tau)-y_0^-(\tau) )  \right),
%\end{eqnarray}
%satisfy the bounds $(\ref{eq:expboundv})$ and $(\ref{eq:expboundv})$. 
The same conclusion holds about the functions $u$ and $v$:
\begin{eqnarray}
\label{eq:expboundun} |u(\tau)| &\le&  K  |\tau|^{1 \over 4} \|x_0^+-x_0^- \|_{2-\vareps} e^{2 \pi ( A-(1-\kappa) 
|\Im(\tau)| ) },\\
\label{eq:expboundvn} |v(\tau)| &\le&   K  |\tau|^{1 \over 4} \|y_0^+-y_0^- \|_{3-\vareps} e^{2 \pi ( A-(1-\kappa) |\Im(\tau)| ) }.
\end{eqnarray}

\section{Exponential splitting} \label{sec:splitting}

We consider the splitting component normal to the unstable separatrix
\begin{equation}
\label{eq:normal}\hat \Theta (\tau) = \det \left[ { d \pi_x \alpha_0^-(\tau) \over d \tau}  \quad   \pi_x(\alpha_0^+(\tau)-\alpha_0^-(\tau))  \atop   { d \pi_y \alpha_0^-(\tau) \over d \tau}  \quad  \pi_y (\alpha_0^+(\tau)-\alpha_0^-(\tau) )   \right].
\end{equation}

In the next Proposition we will require the following result from \cite{La1}. 

Let $\delta \in (0, \pi/2)$ and $B > 4 \tan \delta$. Set 
\begin{equation}\label{eq:DA}
 D^B_\delta=\{z \in \CC: -\pi+\delta \le  \arg z  \le -\delta, \Im z  \le  -B \}.
\end{equation}
Clearly, the set $D^B_\delta$ is a subset of $D_{A,\delta'}$ for some $A$ and $\delta'$.

 \begin{lem} \label{prop:Laprop}
 For any positive $\rho>0$, $\mu>0$, there exists a linear map $\Delta^{-1}: \cX_{\mu+2+\rho}(\cD^B_\delta) \mapsto \cX_\mu(\cD^B_\delta)$ such that
 \begin{itemize}

\item[$i)$] given $g \in \cX_{\mu+2+\rho}(\cD^B_\delta)$, $\Delta^{-1}(g)$ is the solution of the equation 
 $$\Delta(u) :=u(z+1)-u(z)=g(z);$$

\item[$ii)$] the following estimate holds for the norm of $\Delta^{-1}$ 
 \begin{equation} \label{eq:Deltabound}
 \|\Delta^{-1} \| \le C \ B^{-\rho},
 \end{equation}
 where the constant depends only on $\delta$, $\mu$ and $\rho$.
 \end{itemize}
 \end{lem}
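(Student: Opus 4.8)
The statement is the Lazutkin difference-operator inversion lemma (see \cite{La1}); my plan is to exhibit $\Delta^{-1}$ by an explicit formula, verify that it inverts $\Delta$, and then read off the bound~\eqref{eq:Deltabound}. The formal right inverse of $\Delta u = u(\cdot+1) - u(\cdot)$ is the N\"orlund sum $u(z) = \sum_{n\ge 1} g(z-n)$, which telescopes correctly and, thanks to $|g(z-n)| \le \|g\|_{\mu+2+\rho}\,|z-n|^{-(\mu+2+\rho)}$ and $\mu+2+\rho>1$, converges absolutely; the obstruction is that the points $z-n$ leave $D^B_\delta$ through the edge $\arg\zeta = -\pi+\delta$. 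Following \cite{La1}, I would therefore realise $\Delta^{-1}$ instead as a contour integral
\[
(\Delta^{-1}g)(z) = \int_{\Gamma_z} \frac{g(\zeta)}{e^{2\pi i(\zeta-z)}-1}\,d\zeta ,
\]
where $\Gamma_z$ is a path contained in $D^B_\delta$ that runs down to $-i\infty$ and separates $z$ from the lattice $\{z-n: n\ge 1\}$. The kernel is $1$-periodic in $\zeta$, has a simple pole of residue $1/(2\pi i)$ at each point of $z+\ZZ$, is bounded on $\Gamma_z$ (there $\Re(\zeta-z)$ is kept away from $\ZZ$), and decays exponentially as $\Im\zeta\to-\infty$; together with the polynomial decay of $g$ this makes the integral absolutely convergent and $\Delta^{-1}g$ holomorphic on $D^B_\delta$, while linearity in $g$ is immediate.

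Next I would verify $\Delta\circ\Delta^{-1} = \mathrm{id}$. Since the kernel is $1$-periodic, $(\Delta^{-1}g)(z+1)$ equals the \emph{same} integrand integrated over $\Gamma_{z+1}$; deforming $\Gamma_{z+1}$ onto $\Gamma_z$ inside $D^B_\delta$ sweeps across exactly one pole of $\zeta\mapsto (e^{2\pi i(\zeta-z)}-1)^{-1}$, namely the simple pole at $\zeta=z$ of residue $g(z)/(2\pi i)$, and so produces $\pm 2\pi i\cdot g(z)/(2\pi i) = \pm g(z)$; orienting $\Gamma_z$ appropriately gives $(\Delta^{-1}g)(z+1) - (\Delta^{-1}g)(z) = g(z)$. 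The hypothesis $B > 4\tan\delta$ is exactly what leaves enough room to run the paths $\Gamma_z$ and to carry out this deformation without leaving $D^B_\delta$ near the top edge $\Im\zeta = -B$.

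The heart of the matter is the weighted estimate $\bigl|(\Delta^{-1}g)(z)\bigr| \le C\,B^{-\rho}\,\|g\|_{\mu+2+\rho}\,|z|^{-\mu}$ for all $z\in D^B_\delta$, with $C$ depending only on $\delta,\mu,\rho$. I would split $\Gamma_z$ into its portion near $z$ and its tail descending to $-i\infty$. On $D^B_\delta$ one has $|\Im\zeta|\ge B$ and $\arg\zeta$ bounded away from $0$ and $-\pi$; moreover $z\in D^B_\delta$ forces $|\Im z|\ge |z|\sin\delta$, so $\Gamma_z$ can be arranged with $|\zeta|\gtrsim \max\{B,|z|\}$ (implied constant depending on $\delta$). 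Inserting $|g(\zeta)|\le \|g\|_{\mu+2+\rho}|\zeta|^{-(\mu+2+\rho)}$, using that the kernel is bounded near $z$ and exponentially small down the tail, and that $\mu+2+\rho>1$ makes the tail integrable, one gets $|(\Delta^{-1}g)(z)| \le C\,\|g\|_{\mu+2+\rho}\,|z|^{-(\mu+2+\rho)}$; since $|z|\ge B$ this is $\le C\,B^{-\rho}\,\|g\|_{\mu+2+\rho}\,|z|^{-\mu}$, which is~\eqref{eq:Deltabound}. I expect the genuine work to be concentrated in the last two steps: choosing a \emph{single} family of paths $\Gamma_z$ that simultaneously stays in $D^B_\delta$, admits the deformation to $\Gamma_{z+1}$, keeps $|\zeta-z|$ and the kernel under control near $z$, and keeps $|\zeta|$ comparable to $\max\{B,|z|\}$ throughout --- uniformly over all $z\in D^B_\delta$, which is where $B>4\tan\delta$ enters --- and in bookkeeping every constant so that it depends on $\delta,\mu,\rho$ alone.
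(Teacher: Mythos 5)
The paper does not prove this lemma: it is quoted verbatim as ``the following result from \cite{La1},'' and the text moves directly to the Proposition that uses it. So there is no in-paper argument to compare against, and the relevant comparison is with Lazutkin's own proof in \cite{La1}.

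Your proposal reproduces exactly Lazutkin's construction: realize $\Delta^{-1}$ as a contour integral against the $1$-periodic kernel $(e^{2\pi i(\zeta-z)}-1)^{-1}$, get the inversion identity by sliding $\Gamma_{z+1}$ onto $\Gamma_z$ and picking up the single residue $g(z)/(2\pi i)$ at $\zeta=z$, and get the weighted bound from $|\zeta|\gtrsim_\delta |z|$ along a descending path together with exponential decay of the kernel down the tail. That is the right argument, and you correctly flag where the real work lies. Two points worth tightening. First, the pointwise bound you derive, $|(\Delta^{-1}g)(z)|\le C\|g\|_{\mu+2+\rho}|z|^{-(\mu+2+\rho)}$, in fact yields $\|\Delta^{-1}\|\le C\,B^{-(2+\rho)}$; this does imply the stated $C\,B^{-\rho}$ only because $B>4\tan\delta$ lets you absorb the extra $B^{-2}\le(4\tan\delta)^{-2}$ into the $\delta$-dependent constant, and you should say that step out loud rather than pass from $|z|^{-(\mu+2+\rho)}$ straight to $B^{-\rho}|z|^{-\mu}$. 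Second, the naive vertical ray $\Gamma_z=\{z-\tfrac12-it:t\ge 0\}$ does not stay inside $D^B_\delta$ for $z$ near the left edge $\arg z=-\pi+\delta$ (there $z-\tfrac12$ already has $\arg<-\pi+\delta$), so the family of paths must be bent along the wedge boundary on that side; the deformation argument, the uniform lower bound $|\zeta|\gtrsim_\delta|z|$, and the boundedness of the kernel all need to be re-checked for that bent family, and this is precisely where the hypothesis $B>4\tan\delta$ (giving the top edge width $2B/\tan\delta>8$) earns its keep. Since you explicitly identify this path-selection problem as the core of the proof rather than glossing it, I would regard the proposal as a correct and essentially faithful reconstruction of the cited argument, with those two details left to be filled in.
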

\begin{prop}
Let $A$,  $\delta$ and $\vareps$ be as in the Propositions  \ref{prop:unst_existence} and \ref{prop:st_existence}. Let $B$ and $\delta'$ be such that $D^B_{\delta'} \subset D_{A,\delta}$. Then the normal component of splitting is an asymptotically periodic function in $D^B_{\delta'}$:
$$\hat \Theta(\tau) =\Theta_1(F_0) e^{-2 \pi i \tau}+  \hat \Theta_p(\tau) + O\left( e^{-4 \pi |\Im(\tau)|} \right).$$
where for any $\mu>0$ and $\rho>0$,
\begin{equation}\label{theta_norm}
\|\hat \Theta_p\|_\mu  \le   K'  \|\alpha_0^+-\alpha_0^- \|_{2-\vareps}^2 ,
\end{equation}
and
\begin{equation}\label{theta_bound}
|\hat \Theta_p (\tau)| \le K''|\tau|^{{1 \over 2}+\rho+\eps} \|\alpha_0^+-\alpha_0^- \|_{2-\vareps}^2 e^{4 \pi ( A-(1-\kappa) |\Im(\tau)| ) }|,
\end{equation}
where $K'$ and $K''$ depend on  $A$, $B$, $\delta$, $\delta'$, $\mu$, $\rho$, and $\Theta_1$ is a constant that depends on $F_0$.
\end{prop}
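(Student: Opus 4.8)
The plan is to turn the normal splitting component (\ref{eq:normal}) into the solution of a linear first-order difference equation $\Delta\hat\Theta=g$ whose inhomogeneity $g$ is \emph{quadratic} in the difference $w:=\alpha_0^+-\alpha_0^-$, to split $\hat\Theta$ into a particular solution $\hat\Theta_p=\Delta^{-1}g$ controlled by Lemma~\ref{prop:Laprop} and a $1$-periodic homogeneous solution, and to read off the leading Fourier harmonic $\Theta_1(F_0)e^{-2\pi i\tau}$ of the latter. To derive the equation, differentiate the identity $F_0(\alpha_0^-(\tau))=\alpha_0^-(\tau+1)$, obtaining $\dot\alpha_0^-(\tau+1)=DF_0(\alpha_0^-(\tau))\,\dot\alpha_0^-(\tau)$, and Taylor-expand $F_0$ at $\alpha_0^-(\tau)$ inside $F_0(\alpha_0^+(\tau))=\alpha_0^+(\tau+1)$, giving $w(\tau+1)=DF_0(\alpha_0^-(\tau))w(\tau)+R(\tau)$ with
\[
R:=F_0(\alpha_0^-+w)-F_0(\alpha_0^-)-DF_0(\alpha_0^-)w,
\]
a polynomial in the entries of $\alpha_0^-$ and $w$ every monomial of which is at least quadratic in $w$. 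Since $F_0=T\circ H_0^2\circ T^{-1}$ is exact symplectic, $\det DF_0\equiv1$; expanding the determinant in (\ref{eq:normal}) evaluated at $\tau+1$ by linearity in its second column, and using $\dot\alpha_0^-(\tau+1)=DF_0(\alpha_0^-(\tau))\,\dot\alpha_0^-(\tau)$, the $DF_0w$-term reproduces $\hat\Theta(\tau)$ and one is left with
\[
(\Delta\hat\Theta)(\tau)=\det\!\big[\,\dot\alpha_0^-(\tau+1)\ \big|\ R(\tau)\,\big]=:g(\tau).
\]

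Next I would estimate $g$. From the explicit polynomial $F_0$ one reads off the leading part of $R$, namely $\pi_xR\sim2(\pi_xw)^2$ and $\pi_yR\sim-4(\pi_xw)(\pi_yw)$, all remaining monomials carrying extra decay through factors of $\alpha_0^-$. Together with the decay of $\dot\alpha_0^-$ (orders $\tau^{-2}$ and $\tau^{-3}$ in its two components) and that of $w$ from Theorem~B, this shows $g\in\cX_\nu(D^B_{\delta'})$ for some fixed $\nu>1$, with the quadratic bound $\|g\|_\nu\le K\|\alpha_0^+-\alpha_0^-\|_{2-\vareps}^2$. Hence, for $\mu,\rho>0$ with $\mu+2+\rho\le\nu$, Lemma~\ref{prop:Laprop} yields $\hat\Theta_p:=\Delta^{-1}g\in\cX_\mu(D^B_{\delta'})$ solving $\Delta\hat\Theta_p=g$ with $\|\hat\Theta_p\|_\mu\le CB^{-\rho}\|g\|_{\mu+2+\rho}\le K'\|\alpha_0^+-\alpha_0^-\|_{2-\vareps}^2$, which is (\ref{theta_norm}). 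For the pointwise bound (\ref{theta_bound}) I would use the telescoping representation $\hat\Theta_p(\tau)=-\sum_{k\ge0}g(\tau+k)$: the shifts $\tau+k$ all stay in $\{\Im\tau<0\}\subset D_{A,\delta}$, so the exponential bounds (\ref{eq:expboundun})--(\ref{eq:expboundvn}) apply to $w$ at each of them with the \emph{same} factor $e^{2\pi(A-(1-\kappa)|\Im\tau|)}$ because $\Im(\tau+k)=\Im\tau$; inserting these into $R$ and summing — treating separately the few shifts near the singularity, where only the $\cX$-norm bounds are useful — gives $|\hat\Theta_p(\tau)|\le K''|\tau|^{1/2+\rho+\vareps}\|\alpha_0^+-\alpha_0^-\|_{2-\vareps}^2e^{4\pi(A-(1-\kappa)|\Im\tau|)}$.

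Finally, set $\hat\Theta_{\mathrm{per}}:=\hat\Theta-\hat\Theta_p$. Then $\Delta\hat\Theta_{\mathrm{per}}=0$, so $\hat\Theta_{\mathrm{per}}$ is $1$-periodic and analytic on $D^B_{\delta'}$; for $B$ large the vertical strip $\{0\le\Re\tau\le1,\ \Im\tau\le-B\}$ lies in $D^B_{\delta'}$, so periodicity extends $\hat\Theta_{\mathrm{per}}$ analytically to $\{\Im\tau<-B\}$ and it has a Fourier expansion $\sum_{n\in\ZZ}c_ne^{2\pi in\tau}$ there. Boundedness as $\Im\tau\to-\infty$ forces $c_n=0$ for $n\ge1$, and $c_0=\lim_{\Im\tau\to-\infty}\hat\Theta_{\mathrm{per}}=0$ since both $\hat\Theta$ and $\hat\Theta_p$ vanish there ($\hat\Theta$ because $\alpha_0^\pm$ and $\dot\alpha_0^-$ tend to $0$ as $|\tau|\to\infty$). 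Therefore
\[
\hat\Theta_{\mathrm{per}}(\tau)=\Theta_1e^{-2\pi i\tau}+\sum_{n\ge2}c_{-n}e^{-2\pi in\tau}=\Theta_1e^{-2\pi i\tau}+O\!\big(e^{-4\pi|\Im\tau|}\big),\qquad\Theta_1:=c_{-1},
\]
with the remainder summable on $\{\Im\tau\le-B\}$, and $\Theta_1$ depends only on $F_0$ since $\alpha_0^\pm$ (hence $w$, $g$, $\hat\Theta_p$, $\hat\Theta_{\mathrm{per}}$) do. Adding the three contributions gives the claimed expansion of $\hat\Theta$.

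The structural skeleton — the difference equation, the cancellation of the linear term by area preservation, and the Fourier decomposition of the homogeneous solution — is robust. The genuinely delicate step is the pointwise estimate (\ref{theta_bound}): one must make sure that the pointwise exponential smallness of $w$, whose rate depends on $|\Im\tau|$, survives the inversion $\Delta^{-1}$, which works only because $\Delta^{-1}$ acts through a horizontal summation keeping $\Im\tau$ fixed while the whole lower half-plane lies in the domain $D_{A,\delta}$ where (\ref{eq:expboundun})--(\ref{eq:expboundvn}) hold; pinning down the precise power $|\tau|^{1/2+\rho+\vareps}$ then requires balancing the $|\tau|^{1/2}$ growth from squaring the exponential bound against the decay loss built into Lemma~\ref{prop:Laprop}, and dealing with the tapered shape of $D^B_{\delta'}$ near the real axis.
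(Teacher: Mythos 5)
Your proposal follows the paper's proof essentially step by step: the same decomposition $\hat\Theta=\hat\Theta_p+\hat\Theta_{\rm per}$, the same first-order difference equation $\Delta\hat\Theta=\det[\phi\mid R]$ obtained from area preservation (cancelling the linear term in $\psi$), the same application of Lemma~\ref{prop:Laprop} to produce and bound $\hat\Theta_p$, and the same Fourier expansion of the $1$-periodic homogeneous remainder. The only points worth flagging are small: your first column of $M$ is $\dot\alpha_0^-(\tau+1)=DF_0(\alpha_0^-(\tau))\dot\alpha_0^-(\tau)$, which is in fact the correct outcome of the determinant algebra (the paper writes $\phi(\tau)$ there, dropping the innocuous $DF_0$ factor, which tends to the identity and does not affect the estimates); and for the pointwise bound (\ref{theta_bound}) you spell out the telescoping representation $\hat\Theta_p(\tau)=-\sum_{k\ge 0}g(\tau+k)$ and observe that the shifts keep $\Im\tau$ fixed so the exponential factor survives summation --- this is exactly the mechanism the paper invokes when it writes ``again, by Lemma~\ref{prop:Laprop},'' but your version makes explicit why the operator $\Delta^{-1}$ preserves the $e^{-4\pi(1-\kappa)|\Im\tau|}$-type decay rather than merely the algebraic weight of the $\cX_\mu$ norm.
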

\begin{proof}
Denote  $\phi(\tau)= { d \alpha_0^-(\tau)  \over d \tau }$ and $\psi(\tau)=\alpha_0^+(\tau)-\alpha_0^-(\tau)$.

According to equation (\ref{eq:alphaeq}),
\begin{eqnarray}
\nonumber \phi(\tau+1) & \hspace{-2mm}= \hspace{-2mm}& D F_0( \alpha_0^-(\tau) )  \phi(\tau), \\
\nonumber \psi(\tau+1) & \hspace{-2mm}=\hspace{-2mm} & D F_0( \alpha_0^-(\tau) )  \psi(\tau)\hspace{-0.5mm} + \hspace{-0.5mm} \left\{   F_0( \alpha_0^-(\tau) \hspace{-0.5mm} + \hspace{-0.5mm} \psi(\tau) ) \hspace{-0.5mm} - \hspace{-0.5mm}  F_0( \alpha_0^-(\tau) )   \hspace{-0.5mm} - \hspace{-0.5mm} D F_0( \alpha_0^-(\tau) )  \psi(\tau)   \right\}.
\end{eqnarray}

Since $F_0$ is area-preserving, $\det F_0 =1$, and we have, therefore,
\begin{equation}
\label{eq:hatT} \hat \Theta(\tau+1) =  \hat \Theta (\tau) + \det(M(\tau))
\end{equation}
where the matrix-valued function $M(\tau)$ has the form
\begin{eqnarray}
\nonumber M(\tau)&=& \left[ \phi(\tau)  \quad    F_0( \alpha_0^-(\tau) +\psi(\tau) ) -   F_0( \alpha_0^-(\tau) )    -  D F_0( \alpha_0^-(\tau) )  \psi(\tau)  \right] \\
\nonumber &=& \left[{d x_0^-(\tau) \over d \tau}  \quad  O((x_0^+(\tau)-x_0^-(\tau) )^i (y_0^+(\tau)-y_0^-(\tau) )^j ) \atop  {d y_0^-(\tau) \over d \tau}   \quad   O((x_0^+(\tau)-x_0^-(\tau) )^i (y_0^+(\tau)-y_0^-(\tau) )^j ) \right],
\end{eqnarray}
$i+j=2$. Denote $g(\tau):=\det(M(\tau)$. The fact that, by Theorem B $x^-_0 \in \cX_{2-\eps}(D_{A,\delta})$ and $y^-_0 \in \cX_{3-\eps}(D_{A,\delta})$, and that $(D^B_{\delta'} \cup \{\infty\}) \Subset (D_{A,\delta} \cup \{\infty\})$ in $\CC^*$, implies via Cauchy bounds that 
$$ \left|{d x_0^-(\tau) \over d \tau} \right|  \le C_1 {1 \over |\tau|^{2-\eps}}, \quad \left|{d y_0^-(\tau) \over d \tau} \right|  \le C_2 {1 \over |\tau|^{3-\eps}}$$
for some constants $C_1$ and $C_2$.
This, together with the bounds (\ref{eq:expboundun}) and (\ref{eq:expboundvn}), implies that 
\begin{equation}\label{g_exp}
|g(\tau)| \le C {1 \over |\tau|^{{3 \over 2}-\eps}} \|\alpha_0^+-\alpha_0^- \|_{2-\vareps}^2 e^{4 \pi ( A-(1-\kappa) |\Im(\tau)| ) }
\end{equation}
for some constant $C$. The exponential dampening factor in (\ref{g_exp}) means that $g \in \cX_{\mu+2+\rho}(D_{\delta'}^B)$ for any $\mu>0$ and $\rho>0$. Therefore, by Lemma \ref{prop:Laprop}, the equation (\ref{eq:hatT}) has a particular solution $\hat \Theta_p$ which satisfies for any fixed $\mu>0$ and $\rho>0$ 
 $$\|\hat \Theta_p\|_\mu  \le C' (A \tan \delta)^{-\rho}  \|\alpha_0^+-\alpha_0^- \|_{2-\vareps}^2,$$
where $C'=C'(A,B,\delta',\delta,\mu,\rho)$. Furthermore, again, by Lemma \ref{prop:Laprop},
$$|\hat \Theta_p (\tau) \tau^\mu| \le C'' (A \tan \delta)^{-\rho}  {|\tau|^{\mu+2+\rho} \over |\tau|^{{3 \over 2}-\eps}} \|\alpha_0^+-\alpha_0^- \|_{2-\vareps}^2 e^{4 \pi ( A-(1-\kappa) |\Im(\tau)| ) }|,$$
and the bound (\ref{theta_bound}) follows.

The difference $\hat \Theta -\hat \Theta_p$ satisfies  $\Delta \left(\hat \Theta -\hat \Theta_p\right)=0$, and, therefore, is a periodic function in $D^B_{\delta'}$.  We consider the first Fourier coefficient $\Theta_1$  of this periodic function,
$$ \Theta_1=\int_{\tau}^{\tau+1}  e^{2 \pi i s} \left(\hat \Theta(s) -\hat \Theta_p(s)\right) d s, \ \tau, \tau+1 \in D^B_{\delta'}.$$

Finally,
$$\hat \Theta(\tau) =\hat \Theta_p(\tau) +\Theta_1 e^{-2 \pi i \tau}+ O\left( e^{-4 \pi |\Im(\tau)|} \right).$$

The conclusion of the Proposition follows.
\end{proof}

Now, recall that $\tau$ was a reparametrization of time $t$: $\tau=(t-i \pi/2)/h$. This implies that for real $t$
\begin{equation}
\label{eq:mainbound}
|\hat \theta(t)| =|\Theta_1| e^{-{\pi^2 \over h}}+  O\left(h^{-{1 \over 2} -\rho-\eps} \|\alpha_0^+-\alpha_0^- \|_{2-\vareps}^2 e^{-2 (1-\kappa) {\pi^2 \over  h}} \right) + O\left( e^{-2 {\pi^2 \over  h}} \right),
\end{equation}
where $\hat \theta(t):= \hat \Theta \left((t-i \pi/2)/h \right)$.

The Main Theorem follows now from (\ref{eq:mainbound}).

Calculations of this Section and Section \ref{sec:difference} can be repeated verbatim for the differences $\alpha_0^+-\alpha_0^-$ restricted to the domains
$$U_{A,\delta}= \left(D^+_{A,\delta} \cap D^-_{A,\delta}\right) \cup \{\tau \in \CC: \Im(\tau)>0  \}$$
and 
$$U^B_\delta=\{z \in \CC: -\pi+\delta \le  \arg z  \le -\delta, \Im z  \ge  B \}.$$
in the upper half plane, with the same conclusions.

\end{document}